\newcommand{\Z}{\ensuremath{\mathbb{Z}}}
\newcommand{\R}{\ensuremath{\mathbb{R}}}
\newcommand{\CC}{\ensuremath{\mathbb{C}}}
\newcommand{\Tr}{\operatorname{tr}}
\newcommand{\Gal}{\operatorname{Gal}}
\newcommand{\Frob}{\operatorname{Frob}}
\newcommand{\Weil}[1]{\ensuremath{\mathrm{Weil}_{#1}}}	
\newcommand{\dd}{\mathop{}\!\mathrm{d}}
\newcommand{\mes}{\operatorname{mes}}	
\newcommand{\lrangle}[1]{\ensuremath{\langle #1 \rangle}}
\newcommand{\identity}{\ensuremath{\mathrm{id}}}
\newcommand{\Hom}{\operatorname{Hom}}
\newcommand{\End}{\operatorname{End}}
\newcommand{\rightiso}{\ensuremath{\stackrel{\sim}{\rightarrow}}}
\newcommand{\dcate}[1]{\ensuremath{\text{-}\mathsf{#1}}}	
\newcommand{\Ker}{\operatorname{ker}}
\newcommand{\Image}{\operatorname{im}}
\newcommand{\Gm}{\ensuremath{\mathbb{G}_\mathrm{m}}}
\newcommand{\Ga}{\ensuremath{\mathbb{G}_\mathrm{a}}}
\newcommand{\GL}{\operatorname{GL}}
\newcommand{\SO}{\operatorname{SO}}
\newcommand{\Or}{\operatorname{O}}
\newcommand{\SL}{\operatorname{SL}}
\newcommand{\Sp}{\operatorname{Sp}}
\newcommand{\Mp}{\ensuremath{\widetilde{\mathrm{Sp}}}}
\newcommand{\MMp}{\operatorname{Mp}}
\newcommand{\bmu}{\ensuremath{\bm\mu}}
\newcommand{\bpsi}{{\ensuremath{\uppsi}}}
\newcommand{\Endo}{\ensuremath{\mathcal{E}}}
\newcommand{\orbI}{\ensuremath{\mathcal{I}}}
\newcommand{\elli}{\operatorname{ell}}
\newcommand{\asp}{\ensuremath{\dashrule[.7ex]{2 2 2 2}{.4}}} 
\newcommand{\rev}{\ensuremath{\mathbf{p}}} 
\newcommand{\Trans}{\ensuremath{\mathcal{T}}}	
\newcommand{\trans}{\ensuremath{\check{\mathcal{T}}}}	
\theoremstyle{plain}
\newtheorem{proposition}{Proposition}
\newtheorem{lemma}[proposition]{Lemma}
\newtheorem{theorem}[proposition]{Theorem}
\theoremstyle{definition}
\newtheorem{definition}[proposition]{Definition}
\newtheorem{definition-theorem}[proposition]{Definition--Theorem}
\newtheorem{definition-proposition}[proposition]{Definition--Proposition}
\newtheorem{remark}[proposition]{Remark}
\numberwithin{equation}{section}
\numberwithin{proposition}{section}
\title{Spectral transfer for metaplectic groups. II. Hecke algebra correspondences}
\author{Fei Chen \quad Wen-Wei Li}
\date{}
\renewcommand{\l@section}{\@dottedtocline{1}{1.5em}{2.0em}}
\renewcommand{\l@subsection}{\@dottedtocline{2}{4.0em}{3.0em}}
\begin{document}
	
\maketitle

\begin{abstract}
	Let $\mathrm{Mp}(2n)$ be the metaplectic group over a local field $F \supset \mathbb{Q}_p$ defined by an additive character of $F$ of conductor $4\mathfrak{o}_F$. Gan--Savin ($p \neq 2$) and Takeda--Wood ($p=2$) obtained an equivalence between the Bernstein block of $\mathrm{Mp}(2n)$ containing the even (resp.\ odd) Weil representation and the Iwahori-spherical block of the split $\mathrm{SO}(2n+1)$ (resp.\ its non-split inner form), by giving an isomorphism between Hecke algebras. We revisit this equivalence from an endoscopic perspective. It turns out that the L-parameters of irreducible representations are preserved, whilst the difference between characters of component groups is governed by symplectic local root numbers.
\end{abstract}
	
{\scriptsize
	\begin{tabular}{ll}
		\textbf{MSC (2020)} & Primary 22E50; Secondary 11F70, 20C08 \\
		\textbf{Keywords} & Hecke algebra, metaplectic group, local Langlands correspondence
	\end{tabular}}
	
\setcounter{tocdepth}{1}
\tableofcontents
	
\section{Introduction}\label{sec:intro}
\subsection{Overview}
Let $F$ be a non-Archimedean local field of characteristic zero. For a symplectic $F$-vector space $(W, \lrangle{\cdot|\cdot})$ of dimension $2n$, where $\lrangle{\cdot|\cdot}: W \times W \to F$ is the symplectic form, denote by $G$ the symplectic group $\Sp(W)$, often written as $\Sp(2n)$. We have the twofold metaplectic covering $\tilde{G}^{(2)} := \MMp(W)$ of the locally compact group $G(F)$, often written as $\MMp(2n)$. A smooth representation of $\tilde{G}^{(2)}$ is said to be genuine if the kernel $\bmu_2 := \{\pm 1\} \subset \CC^{\times}$ of $\tilde{G}^{(2)} \twoheadrightarrow G(F)$ acts tautologically.

To every non-trivial additive character $\bpsi: F \to \CC^{\times}$ is attached the Weil representation $\omega_{\bpsi}$ of $\tilde{G}^{(2)}$. It is genuine and decomposes into $\omega_{\bpsi}^+ \oplus \omega_{\bpsi}^-$ where $\omega_{\bpsi}^+$ (resp.\ $\omega_{\bpsi}^-$) is irreducible, called the even (resp.\ odd) part of $\omega_{\bpsi}$. In what follows, we assume that $\bpsi|_{4\mathfrak{o}} = \mathbf{1}$ and $\bpsi|_{4\varpi^{-1}\mathfrak{o}} \not\equiv \mathbf{1}$, where $\mathfrak{o} = \mathfrak{o}_F \subset F$ is the ring of integers and $\varpi \in \mathfrak{o}$ is any uniformizer.

It is known that $\omega_{\bpsi}^{\pm}$ belong to different Bernstein blocks $\mathcal{G}_{\bpsi}^{\pm}$ of the category of genuine smooth representations of $\tilde{G}^{(2)}$. On the other hand, let
\[ G^+ := \text{the split}\; \SO(2n+1), \quad G^- := \text{its non-split inner form over $F$.} \]
Let $\mathcal{G}^{\pm}$ be the Iwahori-spherical Bernstein block of the category of smooth representations of $G^{\pm}(F)$, respectively. Most of the following statements will consist of $+$ and $-$ versions.

Denote by $p$ the residual characteristic of $F$. Gan and Savin \cite{GS2} established equivalences of categories $\mathcal{G}_{\bpsi}^{\pm} \simeq \mathcal{G}^{\pm}$ when $p \neq 2$, which is then extended to all $p$ by Takeda and Wood \cite{TW18}. This is achieved by constructing types for $\mathcal{G}_{\bpsi}^{\pm}$. Specifically, they obtained:
\begin{itemize}
	\item $\CC$-algebras $H_{\bpsi}^{\pm}$ with explicit presentations;
	\item equivalences of categories $\mathcal{G}_{\bpsi}^{\pm} \simeq H_{\bpsi}^{\pm}\dcate{Mod}$ using type theory;
	\item explicit isomorphisms of algebras $\mathrm{TW}: H^{\pm} \rightiso H_{\bpsi}^{\pm}$ where $H^{\pm}$ is the Iwahori--Hecke algebra for $G^{\pm}$;
	\item the resulting equivalence $\mathrm{TW}^*: \mathcal{G}_{\bpsi}^{\pm} \rightiso \mathcal{G}^{\pm}$ induced from $\mathrm{TW}$.
\end{itemize}

In \cite{TW18}, $\mathrm{TW}$ is shown to be an isomorphism of Hilbert algebras, a notion introduced in \cite[\S 3.1]{BHK11}. Hence temperedness and square-integrability of representations are preserved under $\mathrm{TW}^*$.

On the other hand, Gan and Savin \cite{GS1} proved the local Langlands correspondence for $\tilde{G}^{(2)}$, abbreviated as LLC, by leveraging the $\Theta$-lifting for the dual pairs
\[ (\Sp(W), \Or(V^{\pm})), \quad \dim V^{\pm} = 2n+1, \; \text{discriminant} = 1, \quad \text{Hasse invariant} = \pm 1. \]
This is feasible because the LLC for $G^+ = \SO(V^+)$ and $G^- = \SO(V^-)$ are furnished by the works of Arthur \cite{Ar13} and Ishimoto \cite{Is24}, respectively; both are based on endoscopy. All these groups share the same Langlands dual group $\Sp(2n, \CC)$ equipped with trivial Galois action. Therefore, to each irreducible object $\pi$ (resp.\ $\sigma$) of $\mathcal{G}_{\bpsi}^{\pm}$ (resp.\ $\mathcal{G}^{\pm}$) is attached an enhanced parameter $(\phi, \chi)$ (resp.\ $(\phi^\circ, \chi^\circ)$), expressed as
\[ \pi = \pi_{\phi, \chi}, \quad \sigma = \sigma_{\phi^{\circ}, \chi^{\circ}}. \]
Here $\chi$ is a character of the component group $\EuScript{S}_\phi$ of the centralizer $S_\phi$ of the L-parameter $\phi$ in $\Sp(2n, \CC)$. Ditto for $\chi^\circ$.

\textbf{Question 1}. If $\sigma = \mathrm{TW}^*(\pi)$, how are $(\phi, \chi)$ and $(\phi^{\circ}, \chi^{\circ})$ related?

There is no apparent link between $\Theta$-lifting and $\mathrm{TW}^*$. Nonetheless, Gan and Savin \cite[Corollary 21]{GS2} showed that $\mathrm{TW}^*$ preserves L-parameters: $\phi = \phi^{\circ}$, whereas $\chi$ and $\chi^{\circ}$ differ in general. The following issue remains unsettled in \textit{loc.\ cit.}

\textbf{Question 2.} In this scenario, what is the exact difference between $\chi$ and $\chi^\circ$?

The aim of this article is to address both questions from an endoscopic perspective. We reprove the equality $\phi = \phi^{\circ}$ and show that the difference between $\chi$ and $\chi^\circ$ is governed by certain symplectic local root numbers, stated as follows.

\begin{definition}[= Definition \ref{def:nu-phi}]
	Let $\phi$ be an L-parameter $\mathcal{L}_F \to \Sp(2n, \CC)$, where $\mathcal{L}_F := \Weil{F} \times \SL(2, \CC)$. Decompose $\phi$ into a direct sum of simple representations of $\mathcal{L}_F$. There is a canonical isomorphism $\EuScript{S}_\phi \simeq \bmu_2^{I^+}$, where $I^+$ is the indexing set of self-dual simple summands in $\phi$ of symplectic type. Identify the Pontryagin dual $\EuScript{S}_\phi^\vee$ of $\EuScript{S}_\phi$ with $\bmu_2^{I^+}$, and define $\nu_\phi \in \EuScript{S}_\phi^\vee$ by
	\[ \nu_{\phi, i} := \epsilon\left(\frac{1}{2}, \phi_i, \bpsi \right), \quad i \in I^+. \]
	where $\phi_i: \mathcal{L}_F \to \GL(m_i, \CC)$ is the corresponding simple summand in $\phi$, and $\epsilon(\frac{1}{2}, \cdots)$ is the local root number defined in \cite[\S 2.2]{GR10}.
\end{definition}

For all $p$, the character $\nu_\phi$ of $\EuScript{S}_\phi$ is generally non-trivial.

\begin{theorem}[= Theorem \ref{prop:main}]
	\label{prop:main-preview}
	Let $\pi = \pi_{\phi, \chi}$ be an irreducible object of $\mathcal{G}_{\bpsi}^{\pm}$ and $\sigma = \sigma_{\phi^{\circ}, \chi^{\circ}} := \mathrm{TW}^*(\pi)$. Then
	\[ \phi^\circ = \phi, \quad \chi^\circ = \chi\nu_\phi. \]
\end{theorem}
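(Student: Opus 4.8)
The plan is to reduce the global statement to a comparison of explicit character formulas on both sides, using the structure theory of the Hecke algebras $H^{\pm}$ and $H^{\pm}_{\bpsi}$ together with the compatibility of the local Langlands correspondence with parabolic induction and the Langlands classification.

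First I would recall that an irreducible object $\pi$ of $\mathcal{G}^{\pm}_{\bpsi}$ corresponds, under the type-theoretic equivalence $\mathcal{G}^{\pm}_{\bpsi} \simeq H^{\pm}_{\bpsi}\dcate{Mod}$, to a simple $H^{\pm}_{\bpsi}$-module, and likewise on the $\SO(2n+1)$ side; since $\mathrm{TW}: H^{\pm} \rightiso H^{\pm}_{\bpsi}$ is an isomorphism of (Hilbert) algebras, $\sigma = \mathrm{TW}^*(\pi)$ is the simple module obtained by pulling $\pi$ back along $\mathrm{TW}$. Because $\mathrm{TW}$ respects temperedness and square-integrability, the Langlands classification and Jacquet-module/parabolic-induction bookkeeping let me reduce to the case where $\pi$ (hence $\sigma$) is tempered, and further, by a Mackey/parabolic descent argument along the lines of \cite{GS2}, to the \emph{discrete series} case, where the enhanced parameter $(\phi,\chi)$ has $\phi$ of the form $\bigoplus_{i \in I^+} \phi_i$ with the $\phi_i$ pairwise distinct symplectic-type simple summands, so that $\EuScript{S}_\phi \simeq \bmu_2^{I^+}$ as in Definition \ref{def:nu-phi}. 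At each inductive step the extra twist must be shown to be exactly $\nu_\phi$ restricted/inflated appropriately; this is where the multiplicativity of local root numbers in Definition \ref{def:nu-phi} matches the additivity of component groups under parabolic induction.

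The heart of the argument is then the tempered/discrete case. Here I would compare two descriptions of the relevant irreducible representation. On the $\SO(2n+1)^{\pm}$ side, Arthur's and Ishimoto's endoscopic classification attaches to the Iwahori-spherical discrete series a character $\chi^\circ$ of $\EuScript{S}_\phi$ via the twisted character identities, and the constituent of an Iwahori-spherical induced representation indexed by $\chi^\circ$ is pinned down by Kazhdan--Lusztig-type parametrization of simple modules over the affine Hecke algebra $H^{\pm}$ (equivalently by the Langlands--Shahidi normalization of intertwining operators and Knapp--Stein $R$-groups). On the metaplectic side, Gan--Savin's LLC for $\tilde G^{(2)}$ is defined through the $\Theta$-correspondence with $\Or(V^{\pm})$, and the point is that the passage from $\sigma_{\phi,\chi^\circ}$ on $\SO(V^{\pm})$ to $\pi_{\phi,\chi}$ on $\Mp(W)$ introduces a sign depending on $i \in I^+$ which, by the explicit computation of $\Theta$-lifts of discrete series and the Rallis inner product / doubling formula, equals precisely the symplectic root number $\epsilon(\tfrac12,\phi_i,\bpsi)$; this is classical (it is the source of the ``$\epsilon$-dichotomy'' in the theory of local theta correspondence, cf.\ \cite{GS1}). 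Thus $\chi = \chi^\circ \cdot \nu_\phi^{-1} = \chi^\circ \nu_\phi$ since $\nu_\phi$ is $\bmu_2$-valued, which rearranges to $\chi^\circ = \chi\nu_\phi$, giving the theorem, once one also checks $\phi^\circ = \phi$ — and the latter follows because $\mathrm{TW}^*$ matches up the Bernstein--Zelevinsky / Kazhdan--Lusztig combinatorics on the two sides, the cuspidal support being determined by the central character of the Hecke algebra module, which $\mathrm{TW}$ preserves.

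The main obstacle I anticipate is the bookkeeping of \emph{which} component-group character is which: the equivalence $\mathrm{TW}^*$ is defined purely combinatorially (via types and the explicit presentation of $H^{\pm}_{\bpsi}$), with no a priori reference to either $\Theta$-lifting or endoscopy, so one must independently compute, on the Hecke-algebra side, how the two parametrizations of simple modules by characters of $\EuScript{S}_\phi$ are related. Concretely, this means tracking the normalizations of the generators of $H^{\pm}$ and $H^{\pm}_{\bpsi}$ under $\mathrm{TW}$ (the sign conventions in the quadratic relations, and the action of the ``sign'' elements coming from the components of $\EuScript{S}_\phi$), and matching these against the Langlands--Shahidi/Plancherel normalizations used to define $\chi$ and $\chi^\circ$ on each side. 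The expectation is that the discrepancy collects exactly into the product $\prod_{i} \epsilon(\tfrac12,\phi_i,\bpsi)$, but verifying this requires the explicit root-number computations for the rank-one (and rank-two, for the ``short root'') building blocks — ultimately an identity between the $\bpsi$-dependent constant appearing in Takeda--Wood's generators and a local $\epsilon$-factor, which is the technical crux of the paper.
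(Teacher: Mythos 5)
Your proposal correctly identifies the overall skeleton (reduce to tempered, then to discrete series, compute in a basic case) and the role of the Takeda--Wood compatibility with parabolic induction, but it has a genuine gap at the heart of the argument.

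The paper emphasizes that ``there is no apparent link between $\Theta$-lifting and $\mathrm{TW}^*$.'' Your plan is to bridge them directly in the discrete-series case by invoking the $\epsilon$-dichotomy of the theta correspondence and then ``tracking the normalizations of the generators of $H^\pm$ and $H^\pm_\bpsi$'' so that the discrepancy collects into $\prod_i \epsilon(\tfrac12,\phi_i,\bpsi)$, reducing everything to rank-one/rank-two building blocks. This does not work as stated: for a discrete-series $\phi = \bigoplus_{i\in I^+}\phi_i$ with $n>1$, the summands $\phi_i$ can have large dimension and the representation $\pi_{\phi,\chi}$ does not decompose into rank-one Iwahori--Hecke pieces in any way that would let you isolate the contribution $\epsilon(\tfrac12,\phi_i,\bpsi)$ from an explicit Hecke-algebra generator computation. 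The $\mathrm{TW}$ isomorphism is a purely combinatorial Iwahori--Hecke statement; the enhanced parameter on each side is defined via endoscopy (or $\Theta$-lifts), and there is no a priori dictionary between the two. The actual proof handles this by deliberately avoiding any direct comparison: for square-integrable $\pi$ with $n>1$ it runs the Moeglin-style induction on Jacquet modules, pushing the comparison down to smaller metaplectic groups through the metaplectic endoscopic character relations of Luo (Theorem~\ref{prop:Luo-endo}, via~\cite{C24}), and Definition~\ref{def:nu-phi} enters via the $\epsilon$-factor $\epsilon(\phi^{s=-1})$ built into the $T_{\phi,s}$ of~\eqref{eqn:T}. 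Only the base case $n=1$ square-integrable is settled by hand.

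You are also silent on two intermediate steps the paper needs. First, the reduction from tempered to square-integrable is not a routine Mackey argument here: one must compare normalized intertwining operators on both sides via Ishimoto's LIR, and this forces a careful matching of the Weyl-element representatives $\tilde w$ versus $\tilde w_{\mathrm{Ishi}}$ and the factor $(-q^{-1})^{t(w)}$ from~\cite{CL23} (Proposition~\ref{prop:gamma-computation}, Lemma~\ref{prop:R-ratio}), none of which appears in your sketch. Second, you omit the reduction to good parity (Lemma~\ref{prop:to-gp}), which is where the non-symplectic summands of $\phi$ are peeled off before the LIR comparison. Finally, your claim that $\phi^\circ=\phi$ follows because $\mathrm{TW}$ preserves the central character of the Hecke module is at best a heuristic: preservation of the infinitesimal character pins down the cuspidal support, but distinguishing discrete-series parameters requires more; the paper instead deduces $\phi^\circ=\phi$ (Lemma~\ref{prop:phi-tempered}) from an irreducibility criterion for $i^{\tilde S}_{\tilde Q}(\pi\boxtimes\pi_{\mathrm{GL}})$ in a larger group, using Arthur's description of $R$-groups and its Ishimoto analogue.
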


The LLC for the Iwahori-spherical block $G^{\pm}$ is expected to agree with Lusztig's parametrization in \cite[Corollary 6.5]{Lu95}; in particular, $\phi$ should be trivial on the inertia subgroup $I_F$. Since we allow general residual characteristic $p$, the compatibility between these parametrizations is not known yet. Nonetheless, the following (very weak) result is a by-product of our approach.

\begin{proposition}[= Proposition \ref{prop:phi-nr}]
	In the scenario above, $\phi$ is trivial on $I_F$.
\end{proposition}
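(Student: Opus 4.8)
The plan is to exploit the fact, recalled in the introduction, that $\pi = \pi_{\phi,\chi}$ lies in the Bernstein block $\mathcal{G}_{\bpsi}^{\pm}$, which by the type theory of Gan--Savin and Takeda--Wood is equivalent to the Iwahori-spherical block $\mathcal{G}^{\pm}$ of $G^{\pm}(F)$ via $\mathrm{TW}^*$. By Theorem \ref{prop:main-preview} we have $\phi^\circ = \phi$, so it suffices to show that the L-parameter $\phi^\circ$ attached to the irreducible object $\sigma = \sigma_{\phi^\circ,\chi^\circ}$ of the Iwahori-spherical block $\mathcal{G}^{\pm}$ of $\SO(2n+1)$ (split or non-split inner form) is trivial on the inertia subgroup $I_F \subset \Weil{F}$. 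Thus the whole statement is transported to a statement purely about the Iwahori-spherical block on the orthogonal side, where the LLC is the endoscopic one of Arthur \cite{Ar13} (for $G^+$) and Ishimoto \cite{Is24} (for $G^-$).

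First I would record that any irreducible $\sigma$ in the Iwahori-spherical block is a subquotient of an unramified principal series, i.e.\ of $\Ind_B^{G^{\pm}} \lambda$ for an unramified character $\lambda$ of the maximal torus; this is exactly what it means for $\sigma$ to have nonzero Iwahori-fixed vectors, by Borel--Casselman--Matsumoto and Bernstein's description of the block. Next I would invoke the compatibility of the endoscopic LLC with the cuspidal support / Langlands quotient construction: the L-parameter of a constituent of $\Ind_B^{G^{\pm}} \lambda$ is, up to the $\SL(2,\CC)$-Arthur/Langlands factor, built from the parameter of the unramified character $\lambda$ of the torus of $\SO(2n+1)$. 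An unramified character of a torus over $F$ has an L-parameter that is trivial on $I_F$ (it factors through $\Weil{F}/I_F \cong \Z$, or in Langlands-parameter form, is unramified in the usual sense), and parabolic induction on the Galois side corresponds to composing with the embedding $\Lgrp{M} \hookrightarrow \Lgrp{G}$, which does not introduce inertial ramification. Finally, passing to a constituent of the induced representation only alters the parameter by the Langlands classification data (choice of $\SL(2,\CC)$-component and of a tempered parameter of a Levi inside the unramified spectrum), none of which touches $I_F$. Hence $\phi^\circ|_{I_F}$ is trivial, and therefore $\phi|_{I_F}$ is trivial.

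I would spell the middle step out a little more carefully because it is the crux: one needs that the endoscopic LLC used to define $\sigma \mapsto (\phi^\circ, \chi^\circ)$ is genuinely compatible with parabolic induction and with the classification of the unramified (Iwahori-spherical) subcategory. For $\SO(2n+1)$ split this is part of Arthur's construction (the parameter of an unramified representation is an unramified parameter, and more generally the block structure matches the Bernstein decomposition indexed by inertial classes of supercuspidal data, here the trivial one on the torus); for the non-split inner form $G^-$ it is Ishimoto's analogous statement. Concretely, an irreducible $\sigma$ with Iwahori-fixed vectors has supercuspidal support on the minimal Levi with an \emph{unramified} character, and the construction of $\phi^\circ$ from supercuspidal support (Moeglin's or Arthur's cuspidal-support compatibility) then forces $\phi^\circ$ unramified; I would cite the precise form of this compatibility from \cite{Ar13} (and \cite{Is24}) rather than reprove it.

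The main obstacle is not any hard analysis but rather pinning down the correct reference-level statement: one must make sure the version of the endoscopic LLC we are using is known to send Iwahori-spherical representations to parameters trivial on $I_F$ \emph{for all residual characteristics $p$}, including $p = 2$ and the non-split inner form. This should follow formally from the compatibility of the endoscopic LLC with parabolic induction plus the fact that unramified characters of tori have unramified parameters, but assembling these citations cleanly — especially for $G^-$ via \cite{Is24} — is where the care is needed. Note that we deliberately do not attempt the stronger statement that $\phi$ agrees with Lusztig's parametrization in \cite{Lu95}; only the inertial triviality is claimed, and that follows from the soft argument above.
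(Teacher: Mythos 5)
Your transport step to the orthogonal side via $\phi^\circ = \phi$ and $\mathrm{TW}^*$ is sound, and the paper itself notes the equivalence (``It is equivalent to the versions for $\mathcal{G}^{\pm}$''). The difficulty lies entirely in the compatibility statement you then propose to \emph{cite}. The authors state explicitly that precisely this fact --- that Arthur's (and Ishimoto's, for $G^-$) endoscopic local Langlands correspondence sends Iwahori-spherical irreducible representations of $\SO(2n+1)^{\pm}(F)$ to L-parameters trivial on $I_F$, for \emph{every} residual characteristic including $p=2$ and for the non-split inner form --- does not have an adequate reference: see the introduction (``Since we allow general residual characteristic $p$, the compatibility between these parametrizations is not known yet'') and the sentence immediately before the proposition (``Due to the lack of an adequate reference for Arthur's parametrization, we shall give proof for $\mathcal{G}_{\bpsi}^{\pm}$ in \S\ref{sec:completion-proof}''). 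You flag this yourself as the ``crux'' and ``main obstacle'' and hope to discharge it by citation of Arthur/Ishimoto; the paper's whole point is that such a citation is unavailable in the required generality, which is why they do \emph{not} take your route. That is a genuine gap, not a cosmetic one.

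What the paper actually does is quite different from your plan: it stays on the metaplectic side throughout. It first reduces to square-integrable $\pi$ via Lemmas~\ref{prop:phi-tempered} and~\ref{prop:to-tempered}; it settles the basic case $n=1$ by the explicit Waldspurger-era classification in Remark~\ref{rem:basic-L}, where one simply reads off that the L-parameter is $\xi \boxtimes r(2)$ with $\xi$ an unramified quadratic character; and for $n>1$ it runs an induction on the rank. The inductive step is Lemma~\ref{prop:r-transfer}(ii): one picks $(\rho, a) \in \mathrm{Jord}(\phi)$ with $\rho$ \emph{already} unramified and $a \geq 2$ (Lemma~\ref{prop:Jord-r}, which itself only needs the cuspidal-support description of the Iwahori-spherical block), descends via a partial Jacquet module to a metaplectic group $\tilde{G}_-$ of rank $n-1$, and checks that the resulting $\pi_{\phi_-, \eta}$ again lies in the avatar of $\mathcal{G}_{\bpsi}^{\pm}$; the induction then forces every $(\rho',a') \in \mathrm{Jord}(\phi_-)$, and hence every block of $\mathrm{Jord}(\phi)$, to have $\rho'$ unramified. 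The engine here is the metaplectic endoscopic character relation of Luo together with its compatibility with Jacquet modules (Proposition~\ref{prop:alpha}), which the paper develops precisely to stand in for the cuspidal-support compatibility you would like to quote from the $\SO$-side literature. In short, your argument is logically coherent but rests on a reference that the authors say does not exist; the paper's proof is an inductive Jacquet-module argument on the metaplectic side that establishes the needed fact without assuming it.
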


Besides tying up a loose end in \cite{GS2, TW18}, we also expect Theorem \ref{prop:main-preview} to play a role in studying the metaplectic local intertwining relation (LIR) beyond the tempered case, via a global-to-local approach.

\subsection{Idea of the proof}
Below is an overview of our approach. In the basic case where $n=1$ and $\pi$ is square-integrable, everything can be checked by hand; see \cite{GanP} for a catalog. The general case will be reduced to the basic one by induction on $n$.

First of all, we need to know that $\mathrm{TW}^*: \mathcal{G}_{\bpsi}^{\pm} \rightiso \mathcal{G}^{\pm}$ is compatible with Jacquet modules and parabolic inductions, both are assumed to be normalized. This is not included in the standard machinery of type theory except in the $+$ case with $p > 2$, but our prior work \cite{CL23} provides the required properties, as summarized in Theorem \ref{prop:compatibility-1}. See also Remark \ref{rem:splitting-Levi}.

Based on this compatibility, the reduction to the basic case proceeds as follows. 
\begin{description}
	\item[Reduction to tempered case] The LLC in \cite{GS2} commutes with the formation of Langlands quotients, and so does $\mathrm{TW}^*$.
	
	\item[Reduction to good parity] We say a bounded L-parameter $\phi: \mathcal{L}_F \to \Sp(2n, \CC)$ is of good parity if all simple summands in $\phi$ are self-dual of symplectic type. Using the compatibility between LLC (or $\mathrm{TW}^*$) and parabolic induction, the problem reduces easily to the case where the L-parameter of $\phi$ is of good parity.
	
	\item[Reduction to square-integrable case] Tempered representations are obtained from square-integrable ones by parabolic induction. The precise classification is known as Knapp--Stein theory, whose relation to LLC is clarified by Arthur's local intertwining relation \cite[Theorem 2.4.1]{Ar13}, abbreviated as LIR. We analyze this procedure through the tempered LIR for $\tilde{G}^{(2)}$ due to Ishimoto \cite{Is20}. To compare it with the $G^{\pm}$-side, which is also due to Ishimoto \cite{Is24} in the $-$ case, we need the compatibility of $\mathrm{TW}^*$ with normalized intertwining operators from \cite{CL23}, as summarized in Theorem \ref{prop:compatibility-2}.
	
	Note that the Knapp--Stein theory has been generalized to covering groups by C.\ Luo \cite{Luo20b}.
	
	\item[Reduction via Jacquet modules] Assume $n > 1$. Suppose $\pi$ is square-integrable, hence so is $\sigma$. We compare appropriate partial Jacquet modules of $\pi$ and $\sigma$ via $\mathrm{TW}^*$ in order to reduce the problem to metaplectic groups of smaller rank. The arguments for $G^{\pm}$ are due to Moeglin, see \cite[\S 8.3.4]{Rel18}. They are adapted to the metaplectic side by using the endoscopic character relations (abbreviation: ECR) of C.\ Luo \cite{Luo20}, as summarized in Theorem \ref{prop:Luo-endo}, together with auxiliary results from \cite{C24}.
\end{description}

We remark that the metaplectic ECR was first conceived in \cite{Li19}, a prequel of the present work. It involves the set $\Endo_{\elli}(\tilde{G})$ of elliptic endoscopic data for $\tilde{G}$, as well as the spectral transfer $\trans_{\mathbf{G}^!, \tilde{G}}$ of distributions from the endoscopic group $G^!(F)$ to $\tilde{G}$ for each $\mathbf{G}^! \in \Endo_{\elli}(\tilde{G})$. Compared with Arthur's ECR, it features an extra local root number; see \S\ref{sec:ECR}.

\begin{remark}
	Although this article is about Hecke algebra correspondences, our arguments make no direct use of type theory or Hecke algebras; all such ingredients are encapsulated in the compatibility properties of $\mathrm{TW}^*$ that we import from \cite{CL23}.
\end{remark}

\begin{remark}\label{rem:splitting-Levi}
	Levi subgroups of $G$ can be expressed as
	\[ M := \Sp(W^\flat) \times \prod_{k=1}^r \GL(n_k), \]
	where $W^{\flat}$ is a symplectic subspace of $W$ with $\dim W^{\flat} + 2\sum_{k=1}^r n_k = 2n$. In order to compare $\mathrm{TW}^*$ with its avatar for the preimage of $M(F)$, we will actually work with the eightfold covering $\tilde{G} = \Mp(W)$ of $G(F)$, defined as the push-out of the twofold covering $\tilde{G}^{(2)} = \MMp(W)$ along $\bmu_2 \hookrightarrow \bmu_8 := \{z \in \CC^{\times}: z^8 = 1\}$. The category $\tilde{G}\dcate{Mod}$ of genuine smooth representation of $\tilde{G}$ is defined in the evident manner, and it is equivalent to that of $\tilde{G}^{(2)}$.
	
	Using the chosen $\bpsi$ and $\lrangle{\cdot|\cdot}$, the preimage $\tilde{M}$ of $M(F)$ in $\tilde{G}$ splits canonically as
	\[ \tilde{M} = \Mp(W^\flat) \times \prod_{k=1}^r \GL(n_k, F) \]
	with the convention that $\Mp(W^{\flat}) = \bmu_8$ if $W^{\flat} = \{0\}$. The avatars of $\mathcal{G}_{\bpsi}^{\pm}$, $H_{\bpsi}^{\pm}$ and $\mathrm{TW}$ can then be defined in the evident way; see \eqref{eqn:H-decomp} and \eqref{eqn:TW-M}. This is the convention adopted in \cite{Li19, CL23, C24}, and will be reviewed in \S\S\ref{sec:Mp}--\ref{sec:splitting-Levi}.
\end{remark}

\subsection{Structure of this article}
In \S\ref{sec:review} we summarize various definitions and conventions about metaplectic groups, the Hecke algebras $H_{\bpsi}^{\pm}$, $H^{\pm}$, the Takeda--Wood isomorphism $\mathrm{TW}$ together with its compatibility with Jacquet modules and parabolic inductions. The crucial invariant $t(w)$ for $w \in \Omega^G_0$ will also be reviewed there, where $\Omega^G_0$ denotes the Weyl group of $G$.

In \S\ref{sec:statement} we recall the LLC (Gan--Savin), the theory of endoscopy for $\tilde{G}$, and ECR (Luo) for metaplectic groups, in order to state the main results. We also prove the equality between L-parameters in the tempered case.

The basic case $n=1$ and $\pi$ square-integrable is settled in \S\ref{sec:basic}. We then reduce the main theorem to the tempered case of good parity in \S\ref{sec:reduction-to-tempered}, and then to square-integrable $\pi$ in \S\ref{sec:reduction-to-L2} through metaplectic LIR (Ishimoto).

Finally, the proofs of Theorem \ref{prop:main} and Proposition \ref{prop:phi-nr} will be completed in \S\ref{sec:Jacquet} by using Jacquet modules to reduce to metaplectic groups of smaller rank.

\subsection{Acknowledgement}
The second named author is supported by NSFC, Grant No.\ 11922101 and 12321001.

\subsection{Conventions}
Throughout this article, $F$ is a chosen non-Archimedean local field of characteristic zero. The normalized absolute value on $F$ is denoted by $|\cdot| = |\cdot|_F$. Denote by $\mathfrak{o}$ the ring of integers in $F$, and let $\varpi \in \mathfrak{o}$ be any uniformizer.

Denote the Weil group of $F$ as $\Weil{F}$, and let $I_F$ be its inertia subgroup. The local Langlands group (i.e.\ the Weil--Deligne group) of $F$ is $\mathcal{L}_F := \Weil{F} \times \SL(2, \CC)$.

By an additive character of $F$, we mean a non-trivial continuous homomorphism $\bpsi: F \to \CC^{\times}$. For all $a \in F^{\times}$, set $\bpsi_a(x) = \bpsi(ax)$ so that $\bpsi_a$ is still an additive character of $F$. Further assumptions will be imposed on $\bpsi$ in \S\ref{sec:Mp}.

All representations in this article are over $\CC$. Trivial representations are denoted as $\mathbf{1}$. The central character of a representation $\tau$, if exists, is denoted by $\omega_\tau$.

For every $m \in \Z_{\geq 1}$, put $\bmu_m := \{z \in \CC^{\times}: z^m = 1 \}$. For every ring $A$, denote by $A\dcate{Mod}$ the abelian category of left $A$-modules. Denote by $\mathfrak{S}_n$ the permutation group on $n$ letters.

The character lattice of an $F$-torus $T$ is denoted by $X^*(T)$.

For each linear algebraic $F$-group $H$, the locally compact group of its $F$-points is denoted by $H(F)$. Hereafter, assume $H(F)$ is unimodular. Denote by $\mes(H)$ the $\CC$-line spanned by Haar measures of $H(F)$. We denote by $H\dcate{Mod}$ the abelian category of smooth representations of $H(F)$. The adjective ``smooth'' will often be omitted in this article.

Let $C^\infty_c(H(F))$ be the space of locally constant functions $f: H(F) \to \CC$ of compact support. The trace (or character) of an admissible smooth representation $\pi$ of $H(F)$ is denoted by $\Tr(\pi)$, which is a linear map
\[\begin{tikzcd}[row sep=tiny]
	C^\infty_c(H(F)) \otimes \mes(H) \arrow[r] & \CC \\
	f \otimes \mu \arrow[mapsto, r] & \int_{H(F)} f(h) \pi(h) \dd\mu(h),
\end{tikzcd}\]
where $\mu$ is a Haar measure on $H(F)$. Linear maps $C^\infty_c(H(F)) \otimes \mes(H) \to \CC$ are also called distributions on $H(F)$.

When $H$ is connected reductive, the parabolic inductions (resp.\ Jacquet functors) are always normalized and denoted by $i_P$ (resp.\ $r_P$), where $P \subset H$ is the parabolic subgroup in question.

The conventions above about representations also pertain to finite coverings of $H(F)$; see \S\ref{sec:Mp} for details.

Assuming $H$ is connected reductive, the Langlands dual group of $H$ is denoted by $H^\vee$, which is a connected reductive $\CC$-group on which $\Gal_F$ acting by pinned automorphisms. We shall identify $H^\vee$ with the group of its $\CC$-points. The set of equivalence classes of L-parameters is denoted by $\Phi(H)$; see \S\ref{sec:L-parameters} for details.

Irreducible representations of $\GL(n, F)$ are identified with the corresponding $n$-dimensional representation of $\mathcal{L}_F$ by LLC. If $\rho$ is a representation of $\GL(n, F)$ and $x \in \CC$, we put $\rho|\cdot|^x := \rho \otimes |\det|^x$.

A symplectic (resp.\ quadratic) $F$-vector space is defined to be a finite-dimensional $F$-vector space $W$ (resp.\ $V$) equipped with a non-degenerate alternating (resp.\ symmetric) bilinear form. Notations like $\Sp(W)$ (resp.\ $\SO(V) \subset \Or(V)$) indicate the corresponding symplectic (resp.\ orthogonal) groups, which will often be identified with the groups of $F$-points for notational convenience.

\section{Review of Hecke algebra correspondences}\label{sec:review}
\subsection{The metaplectic covering}\label{sec:Mp}
Let $F$ be a non-Archimedean local field of characteristic zero. Throughout this article, we fix an additive character $\bpsi$ of $F$ such that
\[ \bpsi|_{4\mathfrak{o}} = \mathbf{1}, \quad \bpsi|_{4\varpi^{-1}\mathfrak{o}} \not\equiv \mathbf{1}. \]

Consider a symplectic $F$-vector space $(W, \lrangle{\cdot|\cdot})$ with $\dim W = 2n > 0$, and choose a symplectic basis
\[ e_1, \ldots, e_n, f_n, \ldots, f_1, \quad \lrangle{e_i | f_j} = \delta_{i, j}. \]

Let $G := \Sp(W)$ be the associated symplectic group. The twofold metaplectic covering of $G(F)$ is the non-split central extension
\[ 1 \to \bmu_2 \to \tilde{G}^{(2)} \xrightarrow{\rev^{(2)}} G(F) \to 1 \]
of locally compact groups, which is unique up to unique isomorphism. We also denote $\tilde{G}^{(2)}$ as $\MMp(W)$ or $\MMp(2n)$.

In this article, $\tilde{G}$ is taken to be the push-out of $\tilde{G}^{(2)}$ along $\bmu_2 \hookrightarrow \bmu_8$, also denoted as $\Mp(W)$ or $\Mp(2n)$, with the convention that $\Mp(0) := \bmu_8$. It fits into a central extension of locally compact groups
\[ 1 \to \bmu_8 \to \tilde{G} \xrightarrow{\rev} G(F) \to 1. \]
We say that $\tilde{G}$ is a metaplectic group of rank $n$.

The group $G(F)$ acts on $\tilde{G}$ and $\tilde{G}^{(2)}$ by conjugation $\tilde{x} \mapsto g\tilde{x}g^{-1}$, by choosing any representative $\tilde{g}$ of $g \in G(F)$. An element $\tilde{\delta}$ of $\tilde{G}$ is said to be regular semisimple if $\delta := \rev(\tilde{\delta}) \in G(F)$ is.

Using $\lrangle{\cdot|\cdot}$ and $\bpsi$, the Weil representation $\omega_{\bpsi} = \omega_{\bpsi}^+ \oplus \omega_{\bpsi}^-$ (the even and odd pieces) can be defined on the level of $\tilde{G}$. Schrödinger models for $\omega_\bpsi$ furnish a $2$-cocycle describing $\tilde{G}$. Since Weil's index $\gamma_{\bpsi}(\cdot)$ for quadratic $F$-vector spaces is $\bmu_8$-valued, the cocycle for $\tilde{G}$ has a simpler form than that for $\tilde{G}^{(2)}$ due to Rao \cite{Rao93} or Lion--Perrin. For an overview, see \cite[\S 2]{Li11}.

Suppose that a Haar measure $\mu$ on $G(F)$ is chosen. We obtain a Haar measure $\tilde{\mu}$ on $\tilde{G}$ such that $\tilde{\mu}(\rev^{-1}(E)) = \mu(E)$ for all measurable subsets $E \subset G(F)$.

\begin{itemize}
	\item A representation $\pi$ of $\tilde{G}$ is said to be \emph{genuine} if $\pi(z) = z \cdot \identity$ for all $z \in \bmu_8$. For instance, $\omega_{\bpsi}^{\pm}$ are both genuine.
	\item A function $f: \tilde{G} \to \CC$ is said to be \emph{anti-genuine} if $f(z\tilde{x}) = z^{-1} f(\tilde{x})$ for all $z \in \bmu_8$.
	\item Define $C^\infty_{c, \asp}(\tilde{G})$ as the space of anti-genuine $C^\infty_c$-functions on $\tilde{G}$, and let $\mathcal{I}_{\asp}(\tilde{G})$ be its quotient by the subspace of $f \in C^\infty_{c, \asp}(\tilde{G})$ whose regular semisimple orbital integrals are all zero.
	\item Let $D_-(\tilde{G})$ be the linear dual of $\mathcal{I}_{\asp}(\tilde{G})$. When the Haar measure is chosen, $D_-(\tilde{G})$ is the space of genuine invariant distributions on $\tilde{G}$. 
\end{itemize}

The definitions above pertain to $\tilde{G}^{(2)}$ as well. As $\tilde{G}$ is a push-out of $\tilde{G}^{(2)}$, the genuine representations (resp.\ genuine invariant distributions) of $\tilde{G}$ and $\tilde{G}^{(2)}$ are identified.

With the chosen $\bpsi$ and $\lrangle{\cdot|\cdot}$, define the Langlands dual group of $\tilde{G}$ as
\[ \tilde{G}^\vee := \Sp(2n, \CC), \;\text{with trivial Galois action.} \]
It comes equipped with a standard pinning.

\subsection{Splitting over Levi subgroups}\label{sec:splitting-Levi}
Via the chosen symplectic basis for $(W, \lrangle{\cdot|\cdot})$, we obtain a $\Z$-basis $\epsilon_1, \ldots, \epsilon_n$ of $X^*(T)$; it also yields the natural Borel pair $(B^{\rightarrow}, T)$ for $G$, so that the $B^{\rightarrow}$-simple roots are
\[ \epsilon_i - \epsilon_{i+1} \quad (1 \leq i < n), \quad 2 \epsilon_n. \]
We also need the ``reversed'' Borel pair $(B^{\leftarrow}, T)$ with $B^{\leftarrow}$-simple roots
\[ \epsilon_{i+1} - \epsilon_i \quad (1 \leq i < n), \quad 2\epsilon_1 . \]

A Levi subgroup $M$ of $G$ is said to be semi-standard if $M \supset T$. A parabolic subgroup $P$ of $G$ is said to be semi-standard if it has a semi-standard Levi factor. Write $\tilde{M} = \rev^{-1}(M(F))$ and $\tilde{P} = \rev^{-1}(P(F))$. It is a general property of coverings that whenever $P \subset G$ is a parabolic subgroup and $U$ is its unipotent radical, $\rev: \tilde{G} \to G(F)$ splits canonically over $U(F)$. Therefore, any Levi decomposition $P = MU$ gives $\tilde{P} = \tilde{M} U(F)$ inside $\tilde{G}$. This justifies the consideration of (normalized) parabolic induction and Jacquet functors, as an adjunction pair
\[\begin{tikzcd}
	r_{\tilde{P}} = r^{\tilde{G}}_{\tilde{P}}: \tilde{G}\dcate{Mod} \arrow[shift left, r] & \tilde{M}\dcate{Mod}: i^{\tilde{G}}_{\tilde{P}} = i_{\tilde{P}} \arrow[shift left, l]
\end{tikzcd}\]
where $\tilde{M}\dcate{Mod}$ (resp.\ $\tilde{G}\dcate{Mod}$) stands for the category of genuine smooth representations of $\tilde{M}$ (resp.\ $\tilde{G}$).

Every semi-standard Levi $M$ can be decomposed into
\[ M = \Sp(W^\flat) \times \prod_{k=1}^r \GL(n_k) \]
where $W^\flat$ is a symplectic subspace of $W$, possibly zero. One of the advantages of using eightfold coverings is that the metaplectic coverings split over the $\GL$-factors of $M(F)$: using the chosen $\bpsi$ and $\lrangle{\cdot|\cdot}$, there is a canonical isomorphism of central extensions of $M(F)$ by $\bmu_8$:
\begin{equation*}
	\tilde{M} \simeq \Mp(W^\flat) \times \prod_{k=1}^r \GL(n_k, F).
\end{equation*}
The isomorphism is characterized in terms of $\omega_{\bpsi}$. In particular, $\tilde{T}$ splits into $T(F) \times \bmu_8$, which is not always possible within the twofold covering $\tilde{G}^{(2)}$. We refer to \cite[\S 2]{Li11} for a detailed discussion.

\begin{definition}\label{def:P1}
	Let $P^1$ be the parabolic subgroup of $G$ containing $B^{\leftarrow}$ with Levi decomposition $P^1 = M^1 U^1$, where
	\begin{align*}
		M^1 & = \Sp(W^1) \times \GL(1)^{n-1} \supset T, \\
		W^1 & := Fe_1 \oplus Ff_1 \subset W.
	\end{align*}
	Let $T^1$ be the $\GL(1)^{n-1}$ factor in $M^1$, so that $T = \GL(1) \times T^1$ and $\tilde{M}^1 \simeq \Mp(W^1) \times T^1(F)$. Also, denote by $\omega_{\bpsi}^1 = \omega_{\bpsi}^{1, +} \oplus \omega_{\bpsi}^{1, -}$ the Weil representation of $\Mp(W^1)$.
\end{definition}

\subsection{The number \texorpdfstring{$t(w)$}{t(w)}}\label{sec:tw}
The $B^{\leftarrow}$-simple roots in $X^*(T)$ are
\[ \beta_1 = 2\epsilon_1, \quad \beta_2 = \epsilon_2 - \epsilon_1, \quad \ldots, \quad \beta_n = \epsilon_n - \epsilon_{n-1}. \]
Denote by $t_1, \ldots, t_n$ the reflections relative to $\beta_1, \ldots, \beta_n$. They generate the Weyl group $\Omega^G_0$ of $G$ relative to $T$.

Identify $\Omega^G_0$ with the group $(\Z/2\Z)^n \rtimes \mathfrak{S}_n$ of ``signed permutations'' on $n$ letters. To each $w \in \Omega^G_0$, one attaches a number $t(w) \in \Z_{\geq 0}$ with the following interpretations.
\begin{itemize}
	\item First of all, $t(w)$ is defined in \cite[Definition 8.1.3]{CL23} as the number of $i$'s such that $w(2\epsilon_i)$ is a $B^{\leftarrow}$-negative root.
	\item Equivalently, $t(w)$ is the number of components $1 + 2\Z \in \Z/2\Z$ in $w$ as an element of $(\Z/2\Z)^n \rtimes \mathfrak{S}_n$.
	\item Take any reduced expression $w = t_{i_1} \cdots t_{i_\ell}$. By \cite[Lemma 8.1.4]{CL23}, $t(w)$ is the number of $k$'s such that $i_k = 1$.
\end{itemize}

The symplectic basis gives rise to a standard $F$-pinning of $G$. Given $\beta = \beta_i$, the homomorphism $x_\beta: \Ga \to G$ induced by the $F$-pinning lifts canonically to $\tilde{x}_\beta: F \to \tilde{G}$.

For each $1 \leq i \leq n$, one defines in \cite[\S 8.1]{CL23} a representative $\tilde{t}_i \in \tilde{G}$ as follows.
\begin{itemize}
	\item For $i > 1$, one repeats the Langlands--Shelstad construction to define
	\[ \tilde{t}_i := \tilde{x}_{\beta_i}(1) \tilde{x}_{-\beta_i}(-1) \tilde{x}_{\beta_i}(1) \in \tilde{G}^{(2)} \subset \tilde{G}. \]
	\item For $i = 1$, one multiplies $\tilde{x}_{\beta_1}(1) \tilde{x}_{-\beta_1}(-1) \tilde{x}_{\beta_1}(1)$ by a suitable element of $\bmu_8$ to obtain $\tilde{t}_1$, so that $\omega_{\bpsi}(\tilde{t}_1)$ is the unitary Fourier transform in the first coordinate in the Schrödinger model.
\end{itemize}

Set $\tilde{w} := \tilde{t}_{i_1} \cdots \tilde{t}_{i_\ell} \in \tilde{G}$. By \cite[Lemma 8.1.1]{CL23}, it is independent of the choice of reduced expression. By construction, $\tilde{w} \in \rev^{-1}(K_0)$ where $K_0 := G(\mathfrak{o})$.

Ishimoto defined another representative $\tilde{w}_{\mathrm{Ishi}} \in \tilde{G}^{(2)}$ of $w$ in \cite[p.1573]{Is20}. The only difference is that when $i=1$, he does not modify $\tilde{x}_{\beta_1}(1) \tilde{x}_{-\beta_1}(-1) \tilde{x}_{\beta_1}(1)$ by $\bmu_8$. The resulting $\tilde{w}_{\mathrm{Ishi}}$ is still independent of reduced expressions. The following comparison between $\tilde{w}$ and $\tilde{w}_{\mathrm{Ishi}}$ will be used in \S\ref{sec:application-LIR}. Define Weil's constant $\gamma_F(\bpsi) \in \bmu_8$ as in \cite[Appendix A]{Rao93}.

\begin{proposition}\label{prop:gamma-computation}
	For all $w \in \Omega^G_0$ we have
	\[ \tilde{w} = \gamma_F(\bpsi)^{-t(w)} \tilde{w}_{\mathrm{Ishi}}. \]
\end{proposition}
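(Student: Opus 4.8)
The plan is to reduce the assertion to the rank-one identity
\[ \tilde{t}_1 = \gamma_F(\bpsi)^{-1} \tilde{t}_1^{\mathrm{Ishi}}, \qquad \tilde{t}_1^{\mathrm{Ishi}} := \tilde{x}_{\beta_1}(1)\tilde{x}_{-\beta_1}(-1)\tilde{x}_{\beta_1}(1), \]
and then to propagate it along reduced words. For the propagation step, fix a reduced expression $w = t_{i_1}\cdots t_{i_\ell}$. By \cite[Lemma 8.1.1]{CL23}, and by the analogous fact for Ishimoto's representatives recalled in \S\ref{sec:tw}, one has $\tilde{w} = \tilde{t}_{i_1}\cdots\tilde{t}_{i_\ell}$ and $\tilde{w}_{\mathrm{Ishi}} = \tilde{t}_{i_1}^{\mathrm{Ishi}}\cdots\tilde{t}_{i_\ell}^{\mathrm{Ishi}}$, both independent of the chosen reduced expression. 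Since $\tilde{t}_i = \tilde{t}_i^{\mathrm{Ishi}}$ for $i \geq 2$ by construction, and since $\bmu_8$ is central in $\tilde{G}$, we may pull the scalar $\gamma_F(\bpsi)^{-1}$ out of each occurrence of $\tilde{t}_1$, obtaining $\tilde{w} = \gamma_F(\bpsi)^{-m}\,\tilde{w}_{\mathrm{Ishi}}$ with $m := \#\{k : i_k = 1\}$. By the third description of $t(w)$ recalled in \S\ref{sec:tw} (namely \cite[Lemma 8.1.4]{CL23}), $m = t(w)$, which is the claim.

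It remains to verify the rank-one identity, a computation inside $\Mp(W^1)$. All the elements $\tilde{x}_{\pm\beta_1}(b)$ ($b\in F$) lie in $\Mp(W^1) \subset \tilde{G}$ via the canonical splitting of $\tilde{M}^1$ over its $\Sp(W^1)$-factor, so both $\tilde{t}_1$ and $\tilde{t}_1^{\mathrm{Ishi}}$ are elements of $\Mp(W^1) \cong \Mp(2)$ lying over the Weyl element $\twomatrix{0}{1}{-1}{0} \in \Sp(W^1)(F)$. By definition $\tilde{t}_1$ is the unique $\bmu_8$-translate of $\tilde{t}_1^{\mathrm{Ishi}}$ such that $\omega_{\bpsi}(\tilde{t}_1) = \mathcal{F}$, the unitary Fourier transform in the first Schrödinger coordinate. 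Using the explicit Schrödinger model for $\omega_{\bpsi}|_{\Mp(W^1)}$ from \cite[\S 2]{Li11} — where $\tilde{x}_{\beta_1}(b)$ acts by a Gaussian multiplier and $\tilde{x}_{-\beta_1}(b)$ by its Fourier conjugate — a standard Gaussian-integral computation gives
\[ \omega_{\bpsi}\bigl(\tilde{x}_{\beta_1}(1)\tilde{x}_{-\beta_1}(-1)\tilde{x}_{\beta_1}(1)\bigr) = \gamma_F(\bpsi)\cdot\mathcal{F}, \]
with $\gamma_F(\bpsi)$ Weil's constant of \cite[Appendix A]{Rao93}. Since $\omega_{\bpsi}$ separates the elements of $\bmu_8$, comparison with $\omega_{\bpsi}(\tilde{t}_1) = \mathcal{F}$ forces $\tilde{t}_1 = \gamma_F(\bpsi)^{-1}\tilde{t}_1^{\mathrm{Ishi}}$.

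The only real difficulty is the bookkeeping of normalizations needed to pin the constant down as exactly $\gamma_F(\bpsi)$: the normalization of $\mathcal{F}$ (which additive-character twist, and whether the pairing carries the usual factor of $2$), the precise Schrödinger model and Lagrangian used in \cite{Li11}, the sign conventions in the Langlands--Shelstad word $\tilde{x}_{\beta_1}(1)\tilde{x}_{-\beta_1}(-1)\tilde{x}_{\beta_1}(1)$, and the definition of $\gamma_F(\bpsi)$ in \cite[Appendix A]{Rao93}; these must all be tracked consistently. It is worth noting that the standing hypothesis $\bpsi|_{4\mathfrak{o}} = \mathbf{1}$ is irrelevant to this particular statement, which holds for an arbitrary additive character with $\gamma_F(\bpsi)$ adjusted accordingly. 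An alternative would be to compare the Rao/Lion--Perrin cocycle of $\tilde{G}^{(2)}$ with the simpler cocycle of $\tilde{G}$ directly on Weyl elements, but the rank-one Schrödinger computation is the most economical route.
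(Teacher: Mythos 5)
Your proposal is correct and follows the same two-step structure as the paper: reduce along a reduced word to the rank-one case using \cite[Lemma 8.1.4]{CL23}, then pin down the $\bmu_8$-factor in $\Mp(W^1)\cong\Mp(2)$. The reduction step is stated with slightly more care than in the paper (which merely says the assertion ``reduces easily''), and your appeal to centrality of $\bmu_8$ and the independence of reduced expressions is exactly what is needed.

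Where the two arguments genuinely diverge is in how the rank-one scalar is pinned down, and your treatment of it is notably thinner than the paper's. You invoke ``a standard Gaussian-integral computation gives $\omega_{\bpsi}(\tilde{x}_{\beta_1}(1)\tilde{x}_{-\beta_1}(-1)\tilde{x}_{\beta_1}(1)) = \gamma_F(\bpsi)\cdot\mathcal{F}$'', and then defer the real content to a list of ``bookkeeping'' caveats. But that equality \emph{is} the proposition in rank one, and the cocycle conventions attached to the specific Langlands--Shelstad word are precisely what can produce stray eighth roots of unity. The paper avoids this trap entirely: instead of carrying out the Gaussian integral, it works modulo $\R^\times_{>0}$ (with the relation $\sim$) to kill all measure normalizations, applies both sides of the candidate identity to the concrete test function $\mathbf{1}_{2\mathfrak{o}}$, evaluates at $0$, rewrites $\tilde{x}_{-\beta}(-1)$ as $\tilde{t}_1^{-1}\tilde{x}_\beta(1)\tilde{t}_1$, and reduces the surviving scalar to the Gauss sum $\int_{\mathfrak{o}}\bpsi(y^2)\,\dd y \sim \gamma_F(\bpsi)$ by \cite[Theorem A.3]{Rao93}. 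This is where the standing hypothesis $\bpsi|_{4\mathfrak{o}}=\mathbf{1}$ is actually used, since it makes the quadratic character $y\mapsto\bpsi(y^2)$ trivial on $2\mathfrak{o}$ and hence descend to $\mathfrak{o}/2\mathfrak{o}$; your remark that the conductor hypothesis is irrelevant is true of the final identity but not of the particular verification the paper uses. In short: same strategy, but the rank-one scalar needs to be \emph{derived}, not asserted, and the paper's evaluation-at-$0$ argument is the economical and self-contained way to do it.
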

\begin{proof}
	Take a reduced expression
	\[ w = t_{i_1} \cdots t_{i_\ell}, \quad 1 \leq i_1, \ldots, i_\ell \leq n. \]
	Let us compare the representatives of $t_i$ in $\tilde{G}$ for each $i$. If $i > 1$, they are equal; if $i=1$ then they differ by some element of $\bmu_8$. Since $t(w)$ is the number $k$'s with $i_k = 1$, the assertion reduces easily to the case where $n=1$ and $w = t_1$.
	
	In the particular case above, put $\beta := \beta_1$. There exists $\xi \in \bmu_8$ such that
	\[ \tilde{t}_1 = \xi \tilde{x}_\beta(1) \tilde{x}_{-\beta}(-1) \tilde{x}_\beta(1). \]
	To determine $\xi$, we realize $\omega_{\bpsi}$ on the Schwartz space $\mathcal{S}(F)$, with the convention of \cite[\S 3.4]{CL23}. Denote by $\mathbf{1}_E \in \mathcal{S}(F)$ the indicator function of any subset $E \subset F$.
	
	Let us write $a \sim b$ to mean that two functions or numbers $a$ and $b$ agree up to $\R^{\times}_{> 0}$. Since $\omega_{\bpsi}(\tilde{t}_1)$ is the unitary Fourier transform, $\omega_{\bpsi}(\tilde{t}_1)(\mathbf{1}_{2\mathfrak{o}}) \sim \mathbf{1}_{\mathfrak{o}}$ by \cite[Lemma 8.2.3]{CL23} and vice versa. On the other hand, $\omega_{\bpsi}(\tilde{x}_\beta(1)) \mathbf{1}_{2\mathfrak{o}} = \mathbf{1}_{2\mathfrak{o}}$ by \cite[(3.4)]{CL23}. Hence
	\begin{equation}\label{eqn:gamma-computation}
		\mathbf{1}_{\mathfrak{o}} \sim \xi \cdot \omega_{\bpsi}(\tilde{x}_\beta(1)) \omega_{\bpsi}(\tilde{x}_{-\beta}(-1)) \mathbf{1}_{2\mathfrak{o}}.
	\end{equation}

	To study the right hand side of \eqref{eqn:gamma-computation}, observe that $\tilde{x}_{-\beta}(-1)$ as $\tilde{t}_1^{-1} \tilde{x}_\beta(1) \tilde{t}_1$. Then
	\begin{align*}
		\omega_{\bpsi}(\tilde{x}_{-\beta}(-1)) \mathbf{1}_{2\mathfrak{o}} & \sim \omega_{\bpsi}(\tilde{t}_1)^{-1} \omega_{\bpsi}(\tilde{x}_\beta(1))(\mathbf{1}_{\mathfrak{o}}) \\
		& \sim \omega_{\bpsi}(\tilde{t}_1)^{-1} \left[ x \mapsto \bpsi(x^2) \mathbf{1}_{\mathfrak{o}}(x) \right]
	\end{align*}
	by \cite[(3.4)]{CL23}, whose value at $0$ is $\sim \int_{\mathfrak{o}} \bpsi(y^2) \dd y$ since $\omega_{\bpsi}(\tilde{t}_1)$ is the unitary Fourier transform.
	
	By \cite[Theorem A.3]{Rao93}, $\int_{\mathfrak{o}} \bpsi(y^2) \dd y$ reduces to a Gauss sum over $\mathfrak{o}/2\mathfrak{o}$ modulo $\sim$, since
	\[ \frac{\bpsi((y+z)^2)}{\bpsi(y^2) \bpsi(z^2)} = \bpsi(2yz), \quad \forall y \in 2\mathfrak{o}, \; \bpsi(y^2) = 1, \]
	and the Gauss sum yields $\gamma_F(\bpsi)$.
	
	By \cite[(3.4)]{CL23}, the value at $0$ of functions in $\mathcal{S}(F)$ is not affected by $\omega_{\bpsi}(\tilde{x}_\beta(1))$. Comparing the values at $0$ of both sides of \eqref{eqn:gamma-computation}, we see $\xi \gamma_F(\bpsi) \sim 1$, or equivalently $\xi = \gamma_F(\bpsi)^{-1}$.
\end{proof}

\subsection{Bernstein blocks and the Takeda--Wood isomorphism}\label{sec:blocks}
Inside $\tilde{G}\dcate{Mod}$, denote by $\mathcal{G}_{\bpsi}^\pm$ the Bernstein blocks containing the irreducible representations $\omega_{\bpsi}^{\pm}$. They are abelian subcategories of $\tilde{G}\dcate{Mod}$.

On the other hand, take $V^{\pm}$ to be the quadratic $F$-vector space of dimension $2n+1$, discriminant $1$ and Hasse invariant $\pm 1$. Set
\[ G^{\pm} := \SO(V^{\pm}) \]
and let $\mathcal{G}^\pm$ be the Bernstein block containing the trivial representation $\mathbf{1}_{G^{\pm}(F)}$ of $G^{\pm}(F)$, i.e.\ the Iwahori-spherical block in $G^{\pm}\dcate{Mod}$. We refer to \cite{GS2, TW18} for further discussions.

Fix an Iwahori subgroup of $G^{\pm}(F)$, and let $H^{\pm}$ be the Iwahori--Hecke algebra of $G^{\pm}(F)$. There is an equivalence of categories
\[ \mathcal{G}^{\pm} \simeq H^{\pm}\dcate{Mod}. \]

By the results of Gan--Savin \cite{GS2} and Takeda--Wood \cite{TW18}, there is a similar picture for the blocks $\mathcal{G}^{\pm}_{\bpsi}$ for $\tilde{G}$. Specifically, using the Iwahori subgroup of $G(F)$ attached to the Borel pair $(B^{\rightarrow}, T)$, in both the $\pm$ cases one has
\begin{itemize}
	\item an explicitly presented $\CC$-algebra $H_{\bpsi}^{\pm}$,
	\item an equivalence $\mathcal{G}_{\bpsi}^{\pm} \simeq H_{\bpsi}^{\pm}\dcate{Mod}$ of categories, namely by constructing types,
	\item an explicit isomorphism of $\CC$-algebras $\mathrm{TW}: H^{\pm} \rightiso H^{\pm}_{\bpsi}$.
\end{itemize}
Both $H^{\pm}$ and $H_{\bpsi}^{\pm}$ have natural Hilbert algebra structures, which are preserved by the Takeda--Wood isomorphism $\mathrm{TW}$. For a summary, we refer to \cite[\S 1]{CL23}.

In the references cited above, it is also shown that:
\begin{itemize}
	\item the irreducibles in $\mathcal{G}^+_{\bpsi}$ are constituents of $i_{\tilde{B}^{\leftarrow}}(\alpha)$, where $\alpha$ is any unramified character of $T(F)$, also viewed a genuine character of $\tilde{T} \simeq T(F) \times \bmu_8$;
	\item the irreducibles in $\mathcal{G}^-_{\bpsi}$ are constituents of $i^{\tilde{G}}_{\tilde{P}^1}(\omega_{\bpsi}^{1, -} \boxtimes \beta)$, where $\beta$ is any unramified character of $T^1(F)$ (see Definition \ref{def:P1}).
\end{itemize}

\begin{definition}\label{def:TW-star}
	Denote by $\mathrm{TW}^*: \mathcal{G}_{\bpsi}^{\pm} \to \mathcal{G}^{\pm}$ the equivalence of categories induced by $\mathrm{TW}: H^{\pm} \rightiso H^{\pm}_{\bpsi}$.
\end{definition}

Since $\mathrm{TW}$ reserves Hilbert structures, $\mathrm{TW}^*$ preserves temperedness and square-integrability of irreducible representations.

\subsection{Compatibilities}\label{sec:compatibilities}
There are standard choices of minimal parabolic and Levi subgroups of $G^{\pm}$. However, it is convenient to use the ``reversed'' minimal parabolic $P_{\min}^{\pm} \subset G^{\pm}$, as in the setting of $G$ in \S\ref{sec:splitting-Levi}.

The Levi factors $M_{\min}^{\pm}$ of $P_{\min}^{\pm}$ are taken to be
\begin{align*}
	M_{\min}^+ & \simeq \GL(1)^n, \\
	M_{\min}^- & \simeq \SO(V_1^-) \times \GL(1)^{n-1},
\end{align*}
where $V_1^-$ is a $3$-dimensional quadratic $F$-vector space with discriminant $1$ and Hasse invariant $-1$. The groups of unramified characters of $M_{\min}^+(F)$ and $T(F)$ (resp.\ $M_{\min}^-(F)$ and $M^1(F)$) are canonically matched.

There are natural bijections
\begin{equation}\label{eqn:P-corr}
	\begin{tikzcd}[row sep=tiny]
		\{ \text{parabolic subgroups}\; B^{\leftarrow} \subset P \subset G \} \arrow[leftrightarrow, r, "1:1"] & \{ \text{parabolic subgroups}\; P_{\min}^+ \subset P^+ \subset G^+ \}, \\
		\{ \text{parabolic subgroups}\; P^1 \subset P \subset G \} \arrow[leftrightarrow, r, "1:1"] & \{ \text{parabolic subgroups}\; P_{\min}^- \subset P^- \subset G^- \},
	\end{tikzcd}
\end{equation}
written as $P \leftrightarrow P^{\pm}$. Consequently, there are natural bijections between certain Levi subgroups $M \leftrightarrow M^{\pm}$, such that if
\[ M = \Sp(W^\flat) \times \prod_{k=1}^r \GL(n_k), \]
then
\[ M^{\pm} = \SO(V^{\pm, \flat}) \times \prod_{k=1}^r \GL(n_k) \]
where $V^{\pm, \flat}$ is a quadratic subspace of $V^{\pm}$ with dimension equal to $\dim W^{\flat} + 1$. Note that $M$ corresponds to some $M^-$ if and only if $\dim W^{\flat} > 0$.
\begin{itemize}
	\item In particular, the groups of unramified characters of $M(F)$ and $M^\pm(F)$ are identified: both reduce to the group of unramified characters of $\prod_{k=1}^r \GL(n_k, F)$. These unramified characters will be used to twist genuine representations of $\tilde{M}$ (resp.\ representations of $M^{\pm}(F)$).
	\item Denote by $A_M$ (resp.\ $A_{M^{\pm}}$) the center of $M$ (resp.\ $M^\pm$). Then $A_M$ and $A_{M^\pm}$ are also identified: both equal $\Gm^r$. The ``roots'' of $A_M$ acting on the unipotent radical of the parabolic match those of $A_{M^\pm}$, up to some factor $2$.
\end{itemize}

For $M \leftrightarrow M^{\pm}$ as above, define the algebras
\begin{equation}\label{eqn:H-decomp}
	\begin{aligned}
		H_{\bpsi}^{\tilde{M}, \pm} & := H_{\bpsi}^{\pm, \flat} \otimes \bigotimes_{k=1}^r H_k, \\
		H^{M^{\pm}} & := H^{\pm, \flat} \otimes \bigotimes_{k=1}^r H_k,
	\end{aligned}
\end{equation}
where
\begin{itemize}
	\item $H_{\bpsi}^{\pm, \flat}$ is the avatar of $H_{\bpsi}^{\pm}$ for $\Mp(W^\flat)$;
	\item $H^{\pm, \flat}$ is the avatar of $H^{\pm}$ for $\SO(V^{\pm, \flat})$;
	\item $H_k$ is the Iwahori--Hecke algebra of $\GL(n_k)$ for all $1 \leq k \leq r$, defined relative to the standard Borel pair.
\end{itemize}

Define the avatar $\mathcal{G}_{\bpsi}^{\tilde{M}, \pm}$ (resp.\ $\mathcal{G}^{M^{\pm}}$) of $\mathcal{G}_{\bpsi}{^\pm}$ (resp.\ $\mathcal{G}^{\pm}$) for $\tilde{M}$ (resp.\ $M^\pm$) in the similar way, so that we have equivalences $\mathcal{G}_{\bpsi}^{\tilde{M}, \pm} \simeq H_{\bpsi}^{\tilde{M}, \pm}\dcate{Mod}$ and $\mathcal{G}^{M^{\pm}} \simeq H^{M^{\pm}}\dcate{Mod}$.

Let $\mathrm{TW}^\flat: H^{\pm, \flat} \rightiso H_{\bpsi}^{\pm, \flat}$ be the isomorphism defined relative to $W^\flat$ and $V^{\pm, \flat}$. From this we obtain
\begin{equation}\label{eqn:TW-M}
	\mathrm{TW}^{\tilde{M}} := \mathrm{TW}^{\flat} \otimes \bigotimes_{k=1}^r \identity_{H_k}: H^{M^{\pm}} \rightiso H^{\tilde{M}, \pm}_{\bpsi}.
\end{equation}

Suppose $P \leftrightarrow P^{\pm}$ under \eqref{eqn:P-corr}, thus $M \leftrightarrow M^{\pm}$. By \cite[\S 7]{BK98}, there is a canonical embedding $\mathrm{t}^{\pm}_{\mathrm{nor}}: H^{M^{\pm}} \to H^{\pm}$ of Hilbert algebras, through which the functor $r_{P^{\pm}}: \mathcal{G}^{\pm} \to \mathcal{G}^{M^{\pm}}$ matches $(\mathrm{t}^{\pm}_{\mathrm{nor}})^*: H^{\pm}\dcate{Mod} \to H^{M^{\pm}}\dcate{Mod}$. By adjunction, $i_{P^{\pm}}: \mathcal{G}^{M^{\pm}} \to \mathcal{G}^{\pm}$ matches $\Hom_{H^{M^{\pm}}}(H^{\pm}, \cdot)$.

On the metaplectic side, define the embedding
\[ \mathrm{t}_{\mathrm{nor}} := \text{the composite of}\; H_{\bpsi}^{\tilde{M}, \pm} \xrightarrow[\sim]{(\mathrm{TW}^{\tilde{M}})^{-1}} H^{M^{\pm}} \xrightarrow{\mathrm{t}^{\pm}_{\mathrm{nor}}} H^{\pm} \xrightarrow[\sim]{\mathrm{TW}} H_{\bpsi}^{\pm}. \]

We are ready to summarize the main results of \cite{CL23}: compatibility between the Takeda--Wood isomorphism $\mathrm{TW}$ and Jacquet module, parabolic induction, and standard (i.e.\ unnormalized) intertwining operators.

\begin{theorem}[{\cite[Theorem 1.1]{CL23}}]
	\label{prop:compatibility-1}
	Let $P \supset B^{\leftarrow}$ (the $+$ case) or $P \supset P^1$ (the $-$ case) be a parabolic subgroup of $G$, with Levi decomposition $MU$ with $M \supset T$. Under $\mathrm{TW}^*$ and $\mathrm{TW}^{\tilde{M}, *}$, the adjunction pair
	\[\begin{tikzcd}
		r_{\tilde{P}}: \mathcal{G}_{\bpsi}^{\pm} \arrow[shift left, r] & \mathcal{G}_{\bpsi}^{\tilde{M}, \pm} : i_{\tilde{P}} \arrow[shift left, l] 
	\end{tikzcd}\]
	corresponds to
	\[\begin{tikzcd}
		\mathrm{t}_{\mathrm{nor}}^*: H_{\bpsi}^{\pm}\dcate{Mod} \arrow[shift left, r] & H_{\bpsi}^{\tilde{M}, \pm}\dcate{Mod} : \Hom_{H_{\bpsi}^{\tilde{M}, \pm}}(H_{\bpsi}^{\pm}, \cdot). \arrow[shift left, l]
	\end{tikzcd}\]
	Simply put, the equivalences $\mathrm{TW}^*$ and $\mathrm{TW}^{\tilde{M}, *}$ respect Jacquet modules and parabolic inductions.
\end{theorem}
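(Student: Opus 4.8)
The plan is to deduce this from the Bushnell--Kutzko theory of covers (suitably adapted to the covering group $\tilde{G}$), applied to the types $(\tilde{J}, \tilde{\lambda})$ and $(\tilde{J}_{M}, \tilde{\lambda}_{M})$ underlying the equivalences $\mathcal{G}_{\bpsi}^{\pm} \simeq H_{\bpsi}^{\pm}\dcate{Mod}$ and $\mathcal{G}_{\bpsi}^{\tilde{M},\pm} \simeq H_{\bpsi}^{\tilde{M},\pm}\dcate{Mod}$; here $(\tilde{J}_{M}, \tilde{\lambda}_{M})$ is the avatar of the Takeda--Wood type for $\tilde{M}$, which makes sense because $\tilde{M}$ splits off its $\GL$-factors (\S\ref{sec:splitting-Levi}). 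Two preliminary reductions simplify the task. First, by transitivity of normalized parabolic induction and of Jacquet functors, together with the corresponding transitivity of the embeddings $\mathrm{t}_{\mathrm{nor}}$ (and $\mathrm{t}_{\mathrm{nor}}^{\pm}$) along chains of Levi subgroups, one reduces to the case where $P$ is maximal among the parabolics allowed in the $\pm$ case. Second, because $r_{\tilde{P}}$ is left adjoint to $i_{\tilde{P}}$, the restriction of scalars $\mathrm{t}_{\mathrm{nor}}^{*}$ is left adjoint to $\Hom_{H_{\bpsi}^{\tilde{M},\pm}}(H_{\bpsi}^{\pm}, \cdot)$, and adjoints are unique up to canonical isomorphism, it is enough to match $r_{\tilde{P}}$ with $\mathrm{t}_{\mathrm{nor}}^{*}$.

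The core step is to show that $(\tilde{J}, \tilde{\lambda})$ is a $\tilde{G}$-cover of $(\tilde{J}_{M}, \tilde{\lambda}_{M})$, that is: (i) $\tilde{J}$ has an Iwahori factorization relative to $P = MU$, with $\tilde{\lambda}$ restricting to $\tilde{\lambda}_{M}$ on $\tilde{J} \cap M$ and trivial on $\tilde{J} \cap U$ and on the opposite $\tilde{J} \cap U^{-}$; and (ii) $H_{\bpsi}^{\pm} = \End_{\tilde{G}}(\cInd_{\tilde{J}}^{\tilde{G}}\tilde{\lambda})$ contains an invertible element supported on a single double coset $\tilde{J}\tilde{\zeta}\tilde{J}$ with $\zeta$ in the centre of $M$ and strongly positive relative to $(P,M)$. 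In the $+$ case with $p \neq 2$, where $(\tilde{J},\tilde{\lambda})$ is just the Iwahori type, both axioms are classical (Gan--Savin), and this is the standard machinery; in general one argues as follows. Axiom (i) should be read off directly from the explicit description of the Takeda--Wood types, which are built from the Iwahori subgroup attached to $(B^{\rightarrow}, T)$ (together with the Weil datum $\omega_{\bpsi}^{1,-}$ in the $-$ case). For axiom (ii) one uses the Bernstein presentation of the affine Hecke algebra $H_{\bpsi}^{\pm}$: a lattice element $\theta_{x}$ with $x$ deep in the positive cone provides an invertible element whose support one checks to be of the required form. Granting the cover property, \cite[\S 7]{BK98} yields a canonical embedding $H_{\bpsi}^{\tilde{M},\pm} \hookrightarrow H_{\bpsi}^{\pm}$ under which, after normalization by the modulus $\delta_{\tilde{P}}$ (which descends from $\delta_{P}$), the Jacquet functor $r_{\tilde{P}}$ corresponds to restriction of scalars. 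It then remains to identify this embedding with $\mathrm{t}_{\mathrm{nor}} = \mathrm{TW} \circ \mathrm{t}_{\mathrm{nor}}^{\pm} \circ (\mathrm{TW}^{\tilde{M}})^{-1}$; since $\mathrm{TW}$ and $\mathrm{TW}^{\tilde{M}}$ are isomorphisms of Bernstein-presented affine Hecke algebras compatible with \eqref{eqn:H-decomp} and \eqref{eqn:TW-M}, this comes down to a compatibility of normalizations, where the identification of the unramified-character groups of $M(F)$ and $M^{\pm}(F)$ and the comparison of the $A_{M}$- and $A_{M^{\pm}}$-roots (\S\ref{sec:compatibilities}, up to the factor of $2$) are used. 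The claim for $i_{\tilde{P}}$ then follows by adjointness.

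I expect the main obstacle to be exactly the situations outside the classical Iwahori picture --- $p = 2$ for both signs, and the $-$ case for every $p$ --- where $(\tilde{J}, \tilde{\lambda})$ is not an Iwahori-fixed-vectors type, so that both the positive-element axiom (ii) and the normalization comparison must be carried out explicitly from the presentations of $H_{\bpsi}^{\pm}$ and of $\mathrm{TW}$ rather than quoted. As a cross-check, and an alternative that sidesteps part of the cover formalism, one can instead compute $r_{\tilde{P}}$ directly on the generating families $i_{\tilde{B}^{\leftarrow}}(\alpha)$ ($+$ case) and $i_{\tilde{P}^{1}}(\omega_{\bpsi}^{1,-} \boxtimes \beta)$ ($-$ case) of \S\ref{sec:blocks} via the geometric lemma, match the outcome with the restriction to $H_{\bpsi}^{\tilde{M},\pm}$ of the corresponding $H_{\bpsi}^{\pm}$-modules, and pin down the bimodule structure using normalized intertwining operators (Theorem \ref{prop:compatibility-2}).
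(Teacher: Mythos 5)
This theorem carries the citation \texttt{[CL23, Theorem 1.1]} and is imported as a black box: the present paper contains no proof of it at all (indeed, the introduction says explicitly that ``all such ingredients are encapsulated in the compatibility properties of $\mathrm{TW}^*$ that we import from \cite{CL23}''). So there is no in-paper argument to compare against; your proposal is really a reconstruction of (part of) \cite{CL23}.

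As a reconstruction, the skeleton you give --- reduce by transitivity to maximal $P$, handle only the Jacquet side and obtain $i_{\tilde P}$ by uniqueness of adjoints, establish that the Takeda--Wood types form a $\tilde G$-cover of the Levi types, invoke \cite[\S 7]{BK98} to get the canonical embedding matching $r_{\tilde P}$ with restriction of scalars, then identify that embedding with $\mathrm{t}_{\mathrm{nor}} = \mathrm{TW}\circ\mathrm{t}_{\mathrm{nor}}^{\pm}\circ(\mathrm{TW}^{\tilde M})^{-1}$ by comparing Bernstein presentations --- is the natural strategy and, at the level of outline, sound. You also put your finger on the right danger points: the $-$ case and $p=2$ are outside the classical Iwahori picture, and the final identification of embeddings hinges on a careful matching of unramified-character lattices and of the $A_M$- vs.\ $A_{M^\pm}$-roots (where the long roots of $\Sp$ differ from those of $\SO(2n+1)$ by a factor of $2$, and where the $\Mp(W^1)$-factor in the $-$ case contributes the Weil piece $\omega_{\bpsi}^{1,-}$ rather than Iwahori-fixed vectors).

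The gap is that the two steps on which the entire theorem rests are described at the level of intent rather than executed. (a) Verifying the cover axioms: axiom (i) (Iwahori factorization and restriction of $\tilde\lambda$) is not something one can ``read off'' in the $-$ case or for $p=2$, because the type involves the Weil datum on the $\Mp(W^1)$-block and a nontrivial compact open subgroup built from $\rev^{-1}(K)$; one has to check that the splitting over unipotent radicals from \S\ref{sec:splitting-Levi} interacts correctly with the metaplectic $2$-cocycle on the chosen Iwahori. Axiom (ii) (the strongly positive invertible element) via Bernstein $\theta_x$ presupposes that one already has the Bernstein presentation for $H_{\bpsi}^{\pm}$ together with the support statement for dominant $x$, which is part of what must be established, not quoted. (b) The identification of the BK embedding with $\mathrm{t}_{\mathrm{nor}}$ is the precise content the theorem is asserting, and the phrase ``comes down to a compatibility of normalizations'' conceals exactly the computation where the factor-of-$2$ discrepancy in the root systems and the choice of modulus $\delta_{\tilde P}^{1/2}$ must be tracked explicitly; if that bookkeeping were automatic, Theorem \ref{prop:compatibility-2} would not need the explicit constants $|2|_F^{t(w)/2}$ and $(-q^{-1})^{t(w)}$. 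Your alternative route (geometric lemma applied to the generating families $i_{\tilde B^{\leftarrow}}(\alpha)$ and $i_{\tilde P^1}(\omega_{\bpsi}^{1,-}\boxtimes\beta)$, plus normalized intertwining operators) is a reasonable cross-check, but it faces the same normalization problem and moreover invokes Theorem \ref{prop:compatibility-2}, which in \cite{CL23} is proved \emph{after} Theorem \ref{prop:compatibility-1} and depends on it --- so as stated it is circular.
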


For a Levi subgroup $M$ of $G$ and $\tilde{w} \in \tilde{G}$ that normalizes $\tilde{M}$, define the twist ${}^{\tilde{w}} \pi$ for any genuine representation $\pi$ of $\tilde{M}$ by
\[ \left( {}^{\tilde{w}} \pi \right)(\tilde{g}) = \pi(\tilde{w}^{-1} \tilde{g} \tilde{w}), \quad \tilde{g} \in \tilde{G}, \]
with the same underlying vector space. Ditto for $M^{\pm}(F) \subset G^{\pm}(F)$.

\begin{theorem}[{\cite[Theorem 1.3 + Remark 10.2.7]{CL23}}]
	\label{prop:compatibility-2}
	Let $P = MU$ be as above. In the $+$ case, suppose that $P \leftrightarrow P^+$ via \eqref{eqn:P-corr}, thus $M \leftrightarrow M^+$. Let $w \in \Omega^G_0$ be such that $wMw^{-1} = M$ and $w$ has minimal length in the $\Omega^M_0$-coset containing $w$, where the length is computed relative to the $B^{\leftarrow}$-simple roots. Take the representative $\tilde{w}$ of $w$ in $\tilde{G}$ defined in \S\ref{sec:tw}; take the Langlands--Shelstad representative $\dot{w}$ in $G^{\pm}(F)$ (see Definition \ref{def:Rpm-operator} and the references therein).
	
	Let $\pi$ (resp.\ $\sigma$) be of finite length in $\mathcal{G}_{\bpsi}^{\tilde{M}, +}$ (resp.\ $\mathcal{G}^{M^+}$) and fix a $\mathrm{TW}^{\tilde{M}}$-equivariant isomorphism between their Hecke modules. Consider the standard intertwining operators
	\begin{align*}
		J_w(\pi \otimes \alpha): i_{\tilde{P}}(\pi \otimes \alpha) & \to i_{\tilde{P}}({}^{\tilde{w}}(\pi \otimes \alpha)), \\
		J^+_w(\sigma \otimes \alpha): i_{P^+}(\sigma \otimes \alpha) & \to i_{P^+}({}^{\dot{w}}(\sigma \otimes \alpha))
	\end{align*}
	viewed as rational families in unramified characters $\alpha$ of $\prod_{k=1}^r \GL(n_k, F)$; they satisfy
	\[ J_w(\pi \otimes \alpha) \xleftrightarrow{\text{matches}} |2|_F^{t(w)/2} J^+_w(\sigma \otimes \alpha) \]
	through $\mathrm{TW}^{\tilde{M}}$ and $\mathrm{TW}$ on the level of Hecke modules.
	
	In the $-$ case, when $P \leftrightarrow P^-$ via \eqref{eqn:P-corr} and $M \leftrightarrow M^-$, define $J_w^-(\sigma \otimes \alpha)$ analogously; we have
	\[ J_w(\pi \otimes \alpha) \xleftrightarrow{\text{matches}} (-q^{-1})^{t(w)} |2|_F^{t(w)/2} J^-_w(\sigma \otimes \alpha). \]
	Caveats:
	\begin{itemize}
		\item the implicated Haar measures in $J_w$, $J_w^{\pm}$ are normalized using suitable special maximal compact subgroups $K_0$, $K^\pm$ respectively;
		\item $J_w$ (resp.\ $J_w^\pm$) is defined using the representative $\tilde{w}$ (resp.\ $\dot{w}$);
		\item the isomorphism between the Hecke modules of $(\pi, \sigma)$ and that between their Weyl-twists must be chosen compatibly.  
	\end{itemize}
\end{theorem}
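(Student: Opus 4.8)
\textbf{Proof proposal for Theorem \ref{prop:compatibility-2}.} Although this is imported from \cite{CL23}, I indicate the plan of proof, since the same mechanism governs the intertwining-operator comparisons used in the sequel.

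\emph{Multiplicativity reduction.} Standard intertwining operators satisfy a cocycle rule: if $w = w_1 w_2$ with lengths adding in the relevant coset space for the $B^{\leftarrow}$-simple roots, then $J_w$ is, up to transport of structure, the composite of $J_{w_2}$ and $J_{w_1}$, and likewise for $J_w^{\pm}$, provided the representatives are chosen multiplicatively — which holds for $\tilde w$ by its independence of the reduced expression (\S\ref{sec:tw}) and for the Langlands--Shelstad $\dot w$ by the analogous fact on $G^{\pm}$. Since $t(\cdot)$ is additive along such factorizations (reduced-word description of $t$, \S\ref{sec:tw}), the scalars $|2|_F^{t(w)/2}$ and $(-q^{-1})^{t(w)} |2|_F^{t(w)/2}$ are multiplicative in $w$, so it suffices to match the ``rank-one'' operators attached to the crossing of a single reduced root of $A_M$ in the unipotent radical; by induction in stages one may moreover replace $G$ by the smallest Levi in play, which on the Hecke side matches the restriction of $\mathrm{TW}$ to the avatar $\mathrm{TW}^{\tilde{M}}$.

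\emph{Case analysis on the root.} Write $M = \Sp(W^\flat) \times \prod_k \GL(n_k)$. For a root of the form $x_i x_j^{\pm 1}$ living inside a $\GL(n_i) \times \GL(n_j)$-factor one has $t(w) = 0$, both scalars equal $1$, and the matching reduces to the statement for $\GL$-blocks — classical type theory (Bushnell--Kutzko, \cite{BK98}) together with the compatibility of $\mathrm{TW}^{\tilde{M}}$ with $\identity_{H_k}$ built into \eqref{eqn:TW-M}. The new input is the crossing of the ``long'' root through which a $\GL(n_k)$-block gets dualized: on the $\Sp$-side this is the $A_M$-root $2x_k$, but on the $\SO$-side it is $x_k$, which is exactly the ``factor $2$'' discrepancy between root data noted in \S\ref{sec:compatibilities}, and $t(w)$ counts precisely these crossings. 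A further use of multiplicativity reduces the matching of these operators to the reflection $t_1$ in $\beta_1 = 2\epsilon_1$; in the extreme case $W^\flat = 0$ this is the $n = 1$ comparison of $\Mp(2)$ against the split $\SO(3) \cong \PGL_2$ (resp.\ against the non-split $\SO(3)$, with the odd Weil representation entering), cf.\ the catalog \cite{GanP}.

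\emph{The rank-one computation, and the main obstacle.} Here one realizes the relevant rational family of induced representations on the Iwahori level, evaluates $J_{t_1}$ as an integral over the root subgroup $\tilde x_{\beta_1}(\cdot)$ in the Schrödinger model — using that $\omega_{\bpsi}(\tilde t_1)$ is the unitary Fourier transform, just as in the proof of Proposition \ref{prop:gamma-computation} — and evaluates $J_{t_1}^{\pm}$ on the $\SO$-side with $\dot w$. Transporting one output to the other through $\mathrm{TW}$ requires pinning down $\mathrm{TW}$ on the affine Hecke algebra generators and on the lattice part so that test vectors correspond, the explicit rank-one Gindikin--Karpelevich integral (in the form valid on the Iwahori-fixed vector), and the measure normalizations through $K_0$ and $K^{\pm}$. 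The factor $|2|_F^{1/2}$ per crossing is produced jointly by the change of variables between the $2x_k$- and $x_k$-root subgroups and the self-dual measure attached to $\bpsi$, whose conductor is $4\mathfrak{o}$ rather than $\mathfrak{o}$; its $\bmu_8$-valued companion $\gamma_F(\bpsi)$ from Proposition \ref{prop:gamma-computation} drops out because $\tilde t_1$ is normalized as the \emph{unitary} Fourier transform rather than by Ishimoto's convention (\S\ref{sec:tw}). In the $-$ case the additional $(-q^{-1})^{t(w)}$ reflects the parameter of the relevant rank-one subalgebra of $H_{\bpsi}^{-}$, built from $\omega_{\bpsi}^{1,-}$ and the non-split $\SO(3)$-type factor, which differs from the split case and shifts the Gindikin--Karpelevich ratio accordingly. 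A structural shortcut exists in principle — by Opdam--Heiermann the normalized operators match intrinsic intertwining elements of the affine Hecke algebra, so all realization-dependent data sits in the Harish-Chandra $\mu$-function, which $\mathrm{TW}$, being a Hilbert-algebra isomorphism, transports up to exactly this measure discrepancy — but either way the substance is one local rank-one computation sensitive to the metaplectic $2$-cocycle and the Weil index, and that is the step I expect to be the main obstacle; everything upstream is formal.
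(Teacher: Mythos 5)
This theorem is imported verbatim from \cite{CL23} and the present paper contains no proof of it, so there is no in-paper argument to compare against; what you have written is a plausible reconstruction of the strategy of \cite[Theorem 1.3]{CL23}, and I will assess it on those terms. The skeleton you describe --- reduce by multiplicativity of standard intertwining operators along a gallery of $A_M$-walls, use that $t(\cdot)$ is additive along reduced factorizations (and hence that the scalars $|2|_F^{t(w)/2}$, $(-q^{-1})^{t(w)}$ are multiplicative in $w$), then settle the rank-one cases, with the genuinely metaplectic content concentrated in the reflection $t_1$ along $\beta_1 = 2\epsilon_1$ --- is the right shape, and the reduction of the long-root crossing to the $n = 1$ comparison via the Schr\"odinger model and \cite[(3.4), Lemma 8.2.3]{CL23} is exactly the kind of computation that produces a power of $|2|_F$ from the conductor $4\mathfrak{o}$ and absorbs $\gamma_F(\bpsi)$ into the normalization of $\tilde t_1$; that part matches Proposition \ref{prop:gamma-computation} and is convincing as a plan.

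Two concrete issues. First, your intermediate case analysis on ``roots of the form $x_i x_j^{\pm 1}$ living inside a $\GL(n_i) \times \GL(n_j)$-factor with $t(w) = 0$'' is imprecise and, taken at face value, false: the reflection in the $A_M$-root $x_i x_j$ (i.e.\ $\epsilon_I + \epsilon_J$ on blocks) does not live in a $\GL$-Levi, it dualizes both blocks and has $t(w) = n_i + n_j$, not $0$. Only the $x_i x_j^{-1}$-reflections (type $A$ block transpositions) have $t = 0$. You do recover from this immediately afterwards by decomposing the dualizing reflections into simple ones and isolating $t_1$, so this is a misstatement rather than a wrong route, but as written it would mislead a reader into thinking the $x_i x_j$-wall contributes trivially. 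Second, and more importantly, you explicitly leave the rank-one computation --- the comparison of the two Gindikin--Karpelevich factors through $\mathrm{TW}$ after pinning it down on Iwahori-level generators and lattice elements, and the identification of $(-q^{-1})$ with the discrepancy between the affine Hecke algebra parameters of $H_\bpsi^-$ (built from $\omega_\bpsi^{1,-}$) and $H^-$ --- as ``the main obstacle'' and do not carry it out. Since those constants are exactly the assertion of the theorem, what you have is a correct outline with the load-bearing step still open, not a proof. The Opdam--Heiermann $\mu$-function shortcut you mention is a legitimate alternative and may well be closer to how \cite{CL23} organizes the argument (the theorem there is stated as a Hilbert-algebra matching), but it still requires computing the same rank-one ratio to extract the $|2|_F^{1/2}$ and $(-q^{-1})$, so it does not remove the obstacle you identify.
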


The precise recipe will be recalled whenever needed.

\section{Statement of the main result}\label{sec:statement}
\subsection{Representations and L-parameters}\label{sec:L-parameters}
Let $\Pi_-(\tilde{G})$ be the set of isomorphism classes of irreducible genuine smooth representations of $\tilde{G}$. Let $\Pi_{\mathrm{temp}, -}(\tilde{G})$ (resp.\ $\Pi_{2, -}(\tilde{G})$) be its subset consisting of tempered (resp.\ square-integrable) irreducibles.

With the definition that $\tilde{G}^\vee = \Sp(2n, \CC)$, denote by $\Phi(\tilde{G})$ the set of equivalence classes of L-parameters for $\tilde{G}$, and let $\Phi_{\mathrm{bdd}}(\tilde{G})$ (resp.\ $\Phi_{2, \mathrm{bdd}}(\tilde{G})$) be its subset consisting of bounded (resp.\ discrete series) L-parameters.

For every connected reductive $F$-group $H$, define the sets $\Pi_2(H) \subset \Pi_{\mathrm{temp}}(H) \subset \Pi(H)$ of irreducible smooth representations of $H(F)$, as well as the sets $\Phi_{2, \mathrm{bdd}}(H) \subset \Pi_{\mathrm{bdd}}(H) \subset \Phi(H)$ of L-parameters in the same manner.

For each $\phi \in \Phi(\tilde{G})$, define its centralizer and component groups as
\[ S_\phi := Z_{\tilde{G}^\vee}(\Image(\phi)), \quad \EuScript{S}_\phi := \pi_0(S_\phi). \]

Note that $\Phi(\tilde{G}) = \Phi(\SO(2n+1))$. View $\phi \in \Phi(\tilde{G})$ as a $2n$-dimensional representation of $\mathcal{L}_F$; here the representations are assumed to be continuous, Frobenius-semisimple on the Weil group of $F$, and algebraic on the $\SL(2, \CC)$ factor of $\mathcal{L}_F$. Then there is a unique decomposition into simple summands
\[ \phi = \bigoplus_{i \in I} m_i \phi_i, \quad m_i \in \Z_{\geq 1}, \quad i \neq i' \implies \phi_i \not\simeq \phi_{i'}, \]
where the indexing set $I$ is finite.

Furthermore, we may decompose $I = I^+ \sqcup I^- \sqcup J \sqcup J'$, such that $J$ and $J'$ are related by a bijection $j \leftrightarrow j'$, and
\begin{itemize}
	\item $\phi_i$ is self-dual of symplectic (resp.\ orthogonal) type when $i \in I^+$ (resp.\ $i \in I^-$);
	\item $\phi_j$ is not self-dual when $j \in J$;
	\item $m_i$ is even for all $i \in I^-$;
	\item $\phi_{j'} \simeq \check{\phi}_j$ and $m_j = m_{j'}$ when $j \leftrightarrow j'$, where $\check{\phi}_j$ denotes the contragredient of $\phi_j$.
\end{itemize}

The subsets $I^{\pm}$ are uniquely determined, but $J$ is not. We have
\begin{itemize}
	\item $\phi \in \Phi_{\mathrm{bdd}}(\tilde{G})$ if and only if $\phi_j$ is bounded for each $j \in J$;
	\item $\phi \in \Phi_{2, \mathrm{bdd}}(\tilde{G})$ if and only if $I = I^+$ and $m_i = 1$ for all $i$;
	\item we have
	\begin{equation}\label{eqn:S}\begin{aligned}
		S_\phi & \simeq \prod_{i \in I^+} \Or(m_i, \CC) \times \prod_{i \in I^-} \Sp(m_i, \CC) \times \prod_{j \in J} \GL(m_j, \CC), \\
		\EuScript{S}_\phi & \simeq \prod_{i \in I^+} \bmu_2,
	\end{aligned}\end{equation}
	under which the map $S_\phi \to \EuScript{S}_\phi$ is given by determinants on the orthogonal factors.
\end{itemize}
In particular, $\EuScript{S}_\phi$ is finite abelian and we denote its Pontryagin dual as $\EuScript{S}_\phi^\vee$.

We will also need the notion of Jordan blocks in $\phi$.

\begin{definition}\label{def:Jord}
	For every $\phi \in \Phi(\tilde{G})$, let $\mathrm{Jord}(\phi)$ be the multi-set of pairs $(\rho, a)$ in the decomposition of $\phi$ into a sum of representations $\rho \boxtimes r(a)$ of $\mathcal{L}_F$, where $\rho$ is a supercuspidal representation of some $\GL(d_\rho, F)$, corresponding to a simple representation of $\Weil{F}$ of dimension $d_\rho \geq 1$, and $r(a)$ is the $a$-dimensional irreducible representation of $\SL(2, \CC)$.
\end{definition}

If $\phi \in \Phi_{2, \mathrm{bdd}}(\tilde{G})$, then $\mathrm{Jord}(\phi)$ is multiplicity-free. Given $(\rho, a) \in \mathrm{Jord}(\phi)$, we shall write $\chi(\rho, a)$ for the value of $\chi \in \EuScript{S}_\phi^\vee$ on $-1 \in \bmu_2$ in the direct factor of $\EuScript{S}_\phi$ attached to $\rho \boxtimes r(a)$.

\subsection{Local Langlands correspondence}\label{sec:LLC}
With our choice of $\bpsi$ and $(W, \lrangle{\cdot|\cdot})$, the local Langlands correspondence for $\tilde{G}$ can be phrased as a canonical decomposition
\begin{equation}\label{eqn:LLC-Mp}
	\begin{aligned}
		\Pi_-(\tilde{G}) & = \bigsqcup_{\phi \in \Phi(\tilde{G})} \Pi_\phi, \\
		\EuScript{S}_\phi^\vee & \xleftrightarrow{1:1} \Pi_\phi \\
		\chi & \longmapsto \pi_{\phi, \chi}.
	\end{aligned}
\end{equation}

Moreover, the decomposition in \eqref{eqn:LLC-Mp} restricts to
\begin{align*}
	\Pi_{\mathrm{temp}, -}(\tilde{G}) & = \bigsqcup_{\phi \in \Phi_{\mathrm{bdd}}(\tilde{G})} \Pi_\phi , \\
	\Pi_{2, -}(\tilde{G}) & = \bigsqcup_{\phi \in \Phi_{2, \mathrm{bdd}}(\tilde{G})} \Pi_\phi.
\end{align*}

In \cite{GS1}, the correspondence \eqref{eqn:LLC-Mp} is established via $\Theta$-lifting and the LLC for special odd orthogonal groups, which is to be reviewed in \S\ref{sec:main}. Furthermore, \eqref{eqn:LLC-Mp} is shown to be compatible with Langlands quotients in \textit{loc.\ cit.} The precise meaning of compatibility is reviewed below.

Take any Borel subgroup $B$ containing $T$. For each parabolic subgroup $P \supset B$ of $G$ with Levi factor $M = G^\flat \times \prod_{k=1}^r \GL(n_k) \supset T$, where $G^\flat = \Sp(W^\flat)$ with $W^\flat$ being a symplectic subspace of $W$, the dual group of the covering
\begin{equation}\label{eqn:metaplectic-type}
	\tilde{M} = \tilde{G}^\flat \times \prod_{k=1}^r \GL(n_k, F) \xrightarrow{(\rev, \identity)} G^\flat(F) \times \prod_{k=1}^r \GL(n_k, F)
\end{equation}
is defined to be
\[ \tilde{M}^\vee := (\tilde{G}^\flat)^\vee \times \prod_{k=1}^r \GL(n_k, \CC) \]
with trivial Galois action, so that $\tilde{M}^\vee \hookrightarrow \tilde{G}^\vee$ canonically up to Weyl group actions. Define $\Phi(\tilde{M})$, etc.\ accordingly, then there is a natural map
\begin{equation}\label{eqn:Phi-M-G}
	\Phi(\tilde{M}) \to \Phi(\tilde{G}).
\end{equation}
Combining \eqref{eqn:LLC-Mp} for $\tilde{G}^\flat$ and the LLC for general linear groups, we obtain \eqref{eqn:LLC-Mp} for $\tilde{M}$.

Given $\phi \in \Phi(\tilde{G})$, we may find a unique $P \supset B$ above, together with a unique decomposition
\[ \phi = \phi_0 \oplus \left( \phi_1 \oplus \check{\phi}_1 \right), \]
such that
\begin{itemize} 
	\item $\phi_0 \in \Phi_{\mathrm{bdd}}(\tilde{G}^\flat)$;
	\item $\phi_1$ is an L-parameter for $\prod_{k=1}^r \GL(n_k)$, which is a twist of a bounded one by an $\R_{> 0}^{\times}$-valued unramified character that is $P$-positive (in the standard sense).
\end{itemize}
Thus $\phi$ is the image of $\phi_{\tilde{M}} = (\phi_0, \phi_1) \in \Phi(\tilde{M})$. Define centralizer and component groups for $\phi_{\tilde{M}}$ in the evident way. The summands in $\phi_1 \oplus \check{\phi}_1$ do not affect the indexing set $I^+$, hence
\[ \EuScript{S}_{\phi_0} = \EuScript{S}_{\phi_{\tilde{M}}} \rightiso \EuScript{S}_\phi. \]

The tempered LLC for $\tilde{G}^\flat$ (resp.\ the LLC for $\prod_{k=1}^r \GL(n_k)$) furnishes the tempered L-packet $\Pi_{\phi_0}$ (resp.\ the representation $\pi_{\GL}$). Set $\Pi_{\phi_{\tilde{M}}} := \{\pi_0 \boxtimes \pi_{\GL} : \pi_0 \in \Pi_{\phi_0} \}$. Since $\pi_{\GL}$ is a $P$-positive twist of a tempered representation, we can define the L-packet
\begin{equation*}\begin{aligned}
	\Pi_\phi & := \left\{ J_{\tilde{P}}(\pi_0 \boxtimes \pi_{\GL}) : \pi_0 \in \Pi_{\phi_0} \right\} \\
	& = \left\{ J_{\tilde{P}}(\pi_{\tilde{M}}) : \pi_{\tilde{M}} \in \Pi_{\phi_{\tilde{M}}} \right\},
\end{aligned}\end{equation*}
where $J_{\tilde{P}}(\cdots)$ denotes the Langlands quotient of $i_{\tilde{P}}(\cdots)$. Since $\Pi_{\phi_0} \xleftrightarrow{1:1} \EuScript{S}_{\phi_0}^\vee$ by \eqref{eqn:LLC-Mp} for $\tilde{G}^\flat$, this yields the desired bijection
\begin{align*}
	\EuScript{S}_\phi^\vee & \xrightarrow{1:1} \Pi_\phi \\
	\chi & \mapsto J_{\tilde{P}}(\pi_{\phi_0, \chi} \boxtimes \pi_{\GL}).
\end{align*}

The procedure above is an instance of the general paradigm in \cite[\S 7.2]{SZ18}.

\subsection{Endoscopic character relations}\label{sec:ECR}
We refer to \cite[\S 3]{Li19} or \cite[\S 2]{Li24b} for a recapitulation of endoscopy for $\tilde{G}$. The following glossary serves to fix notations.

\begin{description}
	\item[Elliptic endoscopic data] These are conjugacy classes of $s \in \tilde{G}^\vee$ satisfying $s^2 = 1$, which are in bijection with pairs $(n', n'') \in \Z_{\geq 0}^2$ such that $n' + n'' = n$, corresponding to the eigenvalues $+1$ and $-1$ of $s$ respectively.
	
	For an elliptic endoscopic datum $\mathbf{G}^!$, the underlying endoscopic group is
	\[ G^! = \SO(2n'+1) \times \SO(2n''+1), \]
	and $(G^!)^\vee$ is identified with $Z_{\tilde{G}^\vee}(s) \subset \tilde{G}^\vee$. In particular, we get a map
	\begin{equation}\label{eqn:Phi-Endo}
		\Phi(G^!) \to \Phi(\tilde{G}).
	\end{equation}
	
	\item[Transfer of orbital integrals] There is a notion of transfer of orbital integrals from $\tilde{G}$ to $G^!$, denoted as $\Trans_{\mathbf{G}^!, \tilde{G}}$.
	
	\item[Transfer of distributions] The dual of $\Trans_{\mathbf{G}^!, \tilde{G}}$ is a linear map
	\[ \trans_{\mathbf{G}^!, \tilde{G}}: SD(G^!) \otimes \mes(G^!)^\vee \to D_-(\tilde{G}) \otimes \mes(G)^\vee \]
	where $SD_-(G^!)$ is the linear dual of
	\[ S\orbI(G^!) := \dfrac{C^\infty_c(G^!(F))}{\left\{ f^! : \forall \; \text{strongly regular semisimple orbital integral is zero} \right\}}; \]
	here a regular semisimple element of $G^!$ is said to be strongly regular if its centralizer is connected.
\end{description}
	
All the above depend on $\bpsi$ and the symplectic form $\lrangle{\cdot|\cdot}$ on $W$, which have been chosen.

\begin{definition}
	Denote by $\Endo_{\elli}(\tilde{G})$ the set of elliptic endoscopic data of $\tilde{G}$.
\end{definition}
	
Let $\phi \in \Phi_{\mathrm{bdd}}(\tilde{G})$ and suppose $s \in S_\phi$ satisfies $s^2 = 1$. Note that $s$ acts on the $2n$-dimensional representation $\mathrm{std} \circ \phi$ of $\mathcal{L}_F$, and denote the $(-1)$-eigenspace as $\phi^{s = -1}$. Set
\begin{equation}\label{eqn:epsilon-factor}
	\epsilon(\phi^{s = -1}) := \epsilon\left(\frac{1}{2}, \phi^{s = -1}, \bpsi \right);
\end{equation}
this is $\bmu_2$-valued since $\phi^{s = -1}$ is self-dual of symplectic type. We refer to \cite[\S 5]{GGP1} for the definition and basic properties for $\epsilon$-factors.

On the other hand, define
\begin{equation}\label{eqn:T}
	T_{\phi, s} := \epsilon(\phi^{s = -1}) \trans_{\mathbf{G}^!, \tilde{G}} \left( S\Theta^{G^!}_{\phi^!} \right),
\end{equation}
an element of $D_-(\tilde{G}) \otimes \mes(G)^\vee$, where
\begin{itemize}
	\item $\mathbf{G}^! \in \Endo_{\elli}(\tilde{G})$ corresponds to the conjugacy class of $s$;
	\item $\phi$ factors through $(G^!)^\vee$ accordingly, and gives rise to $\phi^! \in \Phi_{\mathrm{bdd}}(G^!)$ such that $\phi^! \mapsto \phi$ via \eqref{eqn:Phi-Endo};
	\item $S\Theta^{G^!}_{\phi^!} \in SD(G^!) \otimes \mes(G^!)^\vee$ is attached to $\phi^!$ by Arthur's theory \cite{Ar13}.
\end{itemize}

The ECR for $\tilde{G}$ is stated as follows.
	
\begin{theorem}[C.\ Luo \cite{Luo20}]\label{prop:Luo-endo}
	Let $\phi \in \Phi_{\mathrm{bdd}}(\tilde{G})$ and $s \in S_\phi$ with $s^2 = 1$. The $T_{\phi, s}$ in \eqref{eqn:T} depends only on the image $x \in \EuScript{S}_\phi$ of $s$. Every $x \in \EuScript{S}_\phi$ arises from some $s$ in this way, and $\pi_{\phi, \chi} \in \Pi_\phi$ are characterized by the following identities:
	\[ T_{\phi, x} = \sum_{\chi \in \EuScript{S}_\phi^\vee} \chi(x) \Tr\left( \pi_{\phi, \chi} \right), \quad x \in \EuScript{S}_\phi. \]
\end{theorem}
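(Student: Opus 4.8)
The plan is to reduce the assertion to the case of discrete series parameters via parabolic descent, and to settle that case by a comparison of stabilized trace formulae, with the $\epsilon$-factor in \eqref{eqn:T} accounting for the difference between the endoscopic normalization and the theta-based normalization of Gan--Savin. First I would dispose of the two group-theoretic claims. By \eqref{eqn:S} the map $S_\phi \to \EuScript{S}_\phi$ is surjective (a suitable diagonal $\pm 1$ matrix in the orthogonal factors lifts any $x \in \bmu_2^{I^+}$), so every $x$ arises from some $s$ with $s^2 = 1$; and two such lifts $s, s'$ of the same $x$ differ by an element of $S_\phi^\circ$, which changes the $(-1)$-eigenspace of $s$ on each $\phi_i$-isotypic block only by an even multiple of $\phi_i$, hence leaves $\epsilon(\phi^{s=-1}) = \prod_{i \in I^+}\epsilon(\tfrac12,\phi_i,\bpsi)^{k_i}$ unchanged. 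That $\trans_{\mathbf{G}^!, \tilde{G}}\bigl(S\Theta^{G^!}_{\phi^!}\bigr)$ is also unchanged is the coherence of stable transfer along the inclusions of the smaller endoscopic groups through which $\phi$ factors, which itself rests on the stability of $S\Theta^{G^!}_{\phi^!}$ and the fundamental lemma; so $T_{\phi,s}$ depends only on $x$, and we may write $T_{\phi,x}$.

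Next I would treat a general bounded $\phi$ that is not discrete. Write $\phi$ as the image of $\phi_{\tilde M} = (\phi_0,\phi_1)$ from a proper Levi $\tilde M = \tilde G^\flat \times \prod_{k=1}^r \GL(n_k,F)$ as in \S\ref{sec:LLC}, with $\phi_0$ tempered on $\tilde G^\flat$, and choose the lift $s$ to act trivially on the $\GL$-factors of $S_\phi$, so that $\phi^{s=-1} = \phi_0^{s=-1}$ and $\epsilon(\phi^{s=-1}) = \epsilon(\phi_0^{s=-1})$. The distribution transfer $\trans$ is compatible with parabolic induction and constant terms (part of the descent formalism of \cite{Li19}), Arthur's stable characters are compatible with parabolic induction, and the Gan--Savin correspondence is compatible with Langlands quotients and with the identification $\EuScript{S}_{\phi_0} \rightiso \EuScript{S}_\phi$ recalled in \S\ref{sec:LLC}; combining these reduces the tempered ECR to the discrete series ECR on $\tilde G^\flat$. (One can streamline the bookkeeping by first isolating the good-parity summands via $\GL$-induction, but this is not essential here. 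For non-tempered bounded $\phi$ one similarly passes to Langlands quotients using the $P$-positive twist, exactly as in the construction of $\Pi_\phi$ in \S\ref{sec:LLC}.)

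For a discrete series parameter $\phi = \bigoplus_{i \in I^+}\phi_i$, I would argue globally. Globalize $F$ and $\phi$ to a number field and an automorphic-type global parameter $\dot\phi$ which localizes to $\phi$ at one place and is suitably generic elsewhere, so that the $\dot\phi$-part of the discrete spectrum is controlled; apply the stabilization of the invariant trace formula for $\Mp$ from \cite{Li19} to isolate the $\dot\phi$-isotypic contribution, and on the endoscopic side invoke Arthur's endoscopic classification \cite{Ar13} for $\SO(2n+1)$ (and Ishimoto's \cite{Is24} for the non-split inner form) to expand $S\Theta^{G^!}_{\dot\phi^!}$ together with its global multiplicity formula. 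Equating the two expansions, using linear independence of characters at the auxiliary places and the fact that the global transfer is the product of the local $\trans_v$, one extracts a local identity $T_{\phi,x} = \sum_{\chi}\langle x,\chi\rangle\,\Tr(\pi'_{\phi,\chi})$ for a family $\{\pi'_{\phi,\chi}\}$ exhausting $\Pi_\phi$; Fourier inversion over the finite abelian group $\EuScript{S}_\phi$ then shows these identities characterize the family, and it remains to identify it with the packet indexed as in \eqref{eqn:LLC-Mp}.

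\textbf{Main obstacle.} The delicate point is precisely that last identification, i.e.\ the normalization. Gan--Savin define the bijection $\EuScript{S}_\phi^\vee \leftrightarrow \Pi_\phi$ through the theta correspondences with $\SO(V^+)$ \emph{and} $\SO(V^-)$ simultaneously, and the dichotomy governing from which quadratic space a member of $\Pi_\phi$ lifts is encoded by symplectic local root numbers (Prasad's conjecture, in the form proved by Gan--Savin and Atobe--Gan). Propagating this through the trace formula comparison shows that the character of $\EuScript{S}_\phi$ read off from $\trans_{\mathbf{G}^!, \tilde{G}}\bigl(S\Theta^{G^!}_{\phi^!}\bigr)$ alone differs from the Gan--Savin parameter $\chi$ by the character $s \mapsto \epsilon(\phi^{s=-1})$; inserting this very factor into \eqref{eqn:T} absorbs the discrepancy, so that the final identity reads $T_{\phi,x} = \sum_{\chi}\chi(x)\Tr(\pi_{\phi,\chi})$ with no twist. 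Carrying this $\epsilon$-factor bookkeeping consistently through the globalization — in particular checking its compatibility, place by place, with the parabolic-descent step and with the normalization of transfer factors for $\tilde G$ — is where most of the genuine work lies; the rank-one discrete series case, where every object is explicit (cf.\ \cite{GanP}), is a useful consistency check.
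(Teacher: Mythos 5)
The paper does not prove this statement: Theorem~\ref{prop:Luo-endo} is imported verbatim as a result of C.~Luo \cite{Luo20} (with the $\epsilon$-factor in \eqref{eqn:T} going back to \cite{Li19}), and the manuscript uses it strictly as a black box. There is thus no in-paper argument against which to compare your proposal.

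On its own merits, your sketch is a plausible outline of the kind of argument used in \cite{Luo20} and in Arthur-type proofs of endoscopic character relations generally: the two group-theoretic facts, parabolic descent to the discrete case, and a global trace-formula comparison are the right skeleton, and you have put your finger on the genuine normalization issue --- the presence of the $\epsilon$-factor in \eqref{eqn:T} is exactly what reconciles the endoscopic side with the theta-based Gan--Savin parametrization, which the paper itself flags (``compared with Arthur's ECR, it features an extra local root number''). Two caveats. First, in this paper's conventions $\Phi_{\mathrm{bdd}}(\tilde{G})$ \emph{is} the tempered locus (see \S\ref{sec:L-parameters}), so your parenthetical about ``non-tempered bounded $\phi$'' requiring Langlands quotients is vacuous: the statement never leaves $\Phi_{\mathrm{bdd}}$, and Langlands quotients only enter beyond that. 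Second, the global step you gesture at (``globalize, stabilize, isolate the $\dot\phi$-contribution, equate expansions, extract local identities by linear independence'') is where essentially all the work lives: it requires the stabilization of the metaplectic invariant trace formula, a globalization lemma exhibiting $\phi$ as the local component of a cuspidal global parameter with controlled auxiliary data, a multiplicity formula to isolate the $\dot\phi$-isotypic part of the discrete spectrum, and a careful propagation of the local root number through all of it. Your proposal is an accurate table of contents for that proof, not the proof itself --- but since the paper delegates the entire matter to \cite{Luo20}, that is the same level of detail the paper offers.
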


\begin{remark}\label{rem:Endo-M}
	Theorem \ref{prop:Luo-endo} extends to coverings of the form \eqref{eqn:metaplectic-type}, i.e.\ to Levi subgroups of metaplectic coverings. More precisely, recall that the LLC \eqref{eqn:LLC-Mp} has been extended to $\tilde{M}$; we define $\Endo_{\elli}(\tilde{M}) := \Endo_{\elli}(\tilde{G}^\flat)$, and the corresponding endoscopic group to be that of $\tilde{G}^\flat$ times $\prod_{k=1}^r \GL(n_k)$. The transfer map decomposes accordingly, being identity on the $\GL$ factors. The $\epsilon$-factor in \eqref{eqn:T} in this setting involves only the $\tilde{G}^\flat$-component of $\phi$.
\end{remark}

\subsection{Main theorem}\label{sec:main}
Decompose $\phi \in \Phi(\tilde{G})$ into $\displaystyle\bigoplus_{i \in I^+ \sqcup I^-} m_i \phi_i \oplus \displaystyle\bigoplus_{j \in J} m_j \phi_j$ as in \S\ref{sec:L-parameters}. Identify $\EuScript{S}_\phi$ with $\bmu_2^{I^+}$; identify $\EuScript{S}_\phi^\vee$ with $\bmu_2^{I^+}$ accordingly.

\begin{definition}\label{def:nu-phi}
	Let $\nu_\phi \in \EuScript{S}_\phi^\vee$ be the element whose component at each $i \in I^+$ is given by
	\[ \nu_{\phi, i} = \epsilon\left(\frac{1}{2}, \phi_i, \bpsi \right) \]
	as in \eqref{eqn:epsilon-factor}; we have $\nu_{\phi, i} \in \bmu_2$ since $\phi_i$ is self-dual of symplectic type.
\end{definition}

To state the basic result of this article, retain the definitions in \S\S\ref{sec:blocks}--\ref{sec:compatibilities}, and observe that $(G^{\pm})^\vee = \Sp(2n, \CC) = \tilde{G}^\vee$ with trivial Galois actions, so it makes sense to compare the L-parameters and centralizers. The LLC for $G^{\pm}$ from \cite{Ar13, Is24} is independent of the choice of Whittaker data since $G^{\pm}$ are adjoint groups. Following Vogan, we formulate the LLC as
\begin{equation}\label{eqn:LLC-SO}
	\Pi(G^+) \sqcup \Pi(G^-) = \bigsqcup_\phi \Pi^{G^{\pm}}_\phi
\end{equation}
where $\phi$ ranges over $\Phi(G^+) = \Phi(G^-) = \Phi(\tilde{G})$. For each $\phi$, defining $\EuScript{S}_\phi$ as in the case of $\tilde{G}$, we have
\begin{equation}\label{eqn:LLC-SO-chi}\begin{aligned}
	\EuScript{S}_\phi^\vee & \xrightarrow{1:1} \Pi^{G^{\pm}}_\phi \\
	\chi & \mapsto \sigma_{\phi, \chi}.
\end{aligned}\end{equation}
As before, \eqref{eqn:LLC-SO} and \eqref{eqn:LLC-SO-chi} preserve temperedness and square-integrability, and the general case reduces to the tempered case by forming Langlands quotients.

Below is the main result of this article, which compares the correspondence $\mathrm{TW}^*$ (Definition \ref{def:TW-star}) and the LLC \eqref{eqn:LLC-Mp}, \eqref{eqn:LLC-SO}, \eqref{eqn:LLC-SO-chi}.

\begin{theorem}\label{prop:main}
	Let $\pi$ be an irreducible genuine representation of $\tilde{G}$ lying in $\mathcal{G}_{\bpsi}^{\pm}$, and put $\sigma = \mathrm{TW}^*(\pi)$. If $\pi = \pi_{\phi, \chi}$ and $\sigma = \sigma_{\phi^\circ, \chi^\circ}$ under the LLC for $\tilde{G}$ and $G^{\pm}$ respectively, then
	\[ \phi^\circ = \phi, \quad \chi^\circ = \chi\nu_\phi. \]
\end{theorem}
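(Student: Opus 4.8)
The plan is to follow the reduction scheme advertised in the introduction, proving the theorem by induction on $n$ and peeling off cases until one lands in the base case $n=1$ with $\pi$ square-integrable.

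\textbf{Reduction to the tempered case.} First I would observe that both the LLC \eqref{eqn:LLC-Mp} for $\tilde{G}$ and the LLC \eqref{eqn:LLC-SO} for $G^{\pm}$ are built from their tempered counterparts by taking Langlands quotients of standard modules $i_{\tilde{P}}(\pi_{\tilde{M}})$ (resp.\ $i_{P^{\pm}}(\sigma_{M^{\pm}})$), with the $\GL$-factor carrying a $P$-positive unramified twist. By Theorem~\ref{prop:compatibility-1}, $\mathrm{TW}^*$ intertwines parabolic induction for $\tilde{G}$ and $G^{\pm}$, and since $\mathrm{TW}^*$ preserves the Hilbert-algebra structure it preserves temperedness; hence it matches Langlands quotients with Langlands quotients. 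Moreover the relevant parabolic $P$ and Levi $M = \Sp(W^\flat)\times\prod_k\GL(n_k)$ are read off from the $\GL$-part of $\phi$, which is common to both sides; and since the summands in $\phi_1\oplus\check\phi_1$ lie in $J\sqcup J'$, they do not affect $I^+$, so $\EuScript{S}_{\phi_0}\rightiso\EuScript{S}_\phi$ and $\nu_{\phi_0}$ corresponds to $\nu_\phi$. Thus it suffices to treat $\pi$ tempered, i.e.\ $\phi\in\Phi_{\mathrm{bdd}}(\tilde{G})$, and then work with the avatar of the theorem for $\tilde{M}$ (Remark~\ref{rem:Endo-M}).

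\textbf{Reduction to good parity, then to square-integrable.} For a bounded $\phi$, decompose $\phi = \phi_{\mathrm{gp}} \oplus \bigoplus_{i\in I^-} m_i\phi_i \oplus \bigoplus_{j}(m_j\phi_j\oplus m_j\check\phi_j)$ where $\phi_{\mathrm{gp}}$ collects the symplectic-type simple summands. Again the non-good-parity part dictates a proper Levi $\tilde{M}$, and irreducible tempered representations with parameter $\phi$ are constituents of $i_{\tilde{P}}$ of tempered representations of $\tilde{M}$ whose central block is of good parity; the bijection $\EuScript{S}_{\phi_{\mathrm{gp}}}\rightiso\EuScript{S}_\phi$ is again compatible with $\nu$. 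Using Theorem~\ref{prop:compatibility-1} one reduces to $\phi$ of good parity. In that case, Knapp--Stein theory expresses the tempered $\pi_{\phi,\chi}$ as a summand of $i_{\tilde{P}}$ applied to a square-integrable representation on a Levi, and the precise dictionary between this and the enhanced parameter is governed by the tempered LIR — Ishimoto's \cite{Is20} on the metaplectic side and \cite{Ar13, Is24} on the $G^{\pm}$-side. Here is where Theorem~\ref{prop:compatibility-2} and Proposition~\ref{prop:gamma-computation} enter: the normalized intertwining operators $J_w$ and $J^{\pm}_w$ match up to the explicit scalars $|2|_F^{t(w)/2}$ (resp.\ $(-q^{-1})^{t(w)}|2|_F^{t(w)/2}$), and the representatives $\tilde w$, $\tilde w_{\mathrm{Ishi}}$ differ by $\gamma_F(\bpsi)^{-t(w)}$. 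One must check that these scalars, together with the extra symplectic root number appearing in the metaplectic ECR \eqref{eqn:T}, precisely account for the passage from $\chi$ to $\chi\nu_\phi$ in a way consistent with the Knapp--Stein $R$-groups, leaving the square-integrable case as the remaining goal.

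\textbf{The square-integrable case via Jacquet modules.} Assume now $\pi = \pi_{\phi,\chi}$ square-integrable, so $I = I^+$, all $m_i = 1$, and $\mathrm{Jord}(\phi)$ is multiplicity-free. For $n=1$ this is checked by hand using \cite{GanP}, pinning down both $\phi^\circ = \phi$ and $\chi^\circ = \chi\nu_\phi$ by comparing the explicit list of square-integrable genuine representations of $\MMp(2)$ with those of $G^{\pm}$. For $n>1$, I would choose a Jordan block $(\rho,a)\in\mathrm{Jord}(\phi)$ and, following Moeglin's arguments for $\SO(2n+1)$ as exposed in \cite[\S 8.3.4]{Rel18}, extract the relevant partial Jacquet module $r$ of $\pi$ along a $\GL$-parabolic; it is again square-integrable on a smaller metaplectic group with parameter $\phi'$ obtained by replacing $(\rho,a)$ with $(\rho, a-2)$ (or deleting it), and the character $\chi$ restricts predictably. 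By Theorem~\ref{prop:compatibility-1}, $\mathrm{TW}^*$ sends this to the corresponding partial Jacquet module of $\sigma$, and Moeglin's analysis on the $G^{\pm}$-side identifies the resulting $(\phi'^{\circ},\chi'^{\circ})$. The endoscopic input — Theorem~\ref{prop:Luo-endo}, the ECR with its extra $\epsilon(\phi^{s=-1})$ factor — is what lets one transport Moeglin's orthogonal-group computations to $\tilde{G}$; the auxiliary results of \cite{C24} handle the bookkeeping of Jacquet modules versus endoscopic transfer. Induction then gives $\phi'^{\circ} = \phi'$ and $\chi'^{\circ} = \chi'\nu_{\phi'}$, and one reconstructs $\phi^\circ = \phi$ and $\chi^\circ = \chi\nu_\phi$ by noting that the discrepancy $\nu_\phi/\nu_{\phi'}$ is exactly $\epsilon(\tfrac12,\rho\boxtimes r(a),\bpsi)/\epsilon(\tfrac12,\rho\boxtimes r(a-2),\bpsi)$ on the block being modified, which is visible in the Jacquet-module / ECR comparison.

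\textbf{Main obstacle.} The hardest part is the square-integrable step for $n>1$: adapting Moeglin's Jacquet-module arguments, which are deeply tied to the structure of $\SO(2n+1)$ and its endoscopy, to the metaplectic side where the transfer carries the extra symplectic root number. One must verify that every step of Moeglin's reduction — in particular the identification of which partial Jacquet module is nonzero and the exact character it carries — survives the passage through $\trans_{\mathbf{G}^!,\tilde{G}}$, and that the root-number bookkeeping (the $\gamma_F(\bpsi)$'s from Proposition~\ref{prop:gamma-computation}, the $|2|_F^{t(w)/2}$ and $(-q^{-1})^{t(w)}$ from Theorem~\ref{prop:compatibility-2}, and the $\epsilon(\phi^{s=-1})$ from the ECR) conspires to produce precisely the twist by $\nu_\phi$ and nothing more. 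A secondary subtlety is ensuring the $-$ case (Ishimoto's work \cite{Is24}, non-split $G^-$) is fully parallel to the $+$ case throughout, especially in the normalization of intertwining operators and the choice of representatives $\dot w$ in $G^{\pm}(F)$.
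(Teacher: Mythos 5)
Your reduction scheme matches the paper's exactly: Langlands quotients $\to$ tempered, tempered $\to$ good parity via compatibility with parabolic induction, good parity $\to$ square-integrable via the tempered LIR (with Theorem~\ref{prop:compatibility-2} and Proposition~\ref{prop:gamma-computation} as the scalar bookkeeping), square-integrable $\to$ smaller rank via Jacquet modules and the metaplectic ECR, with the $n=1$ case by hand. That is the proof. Three points where your write-up is short of what is needed:

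\textbf{Separate proof of $\phi^\circ = \phi$.} Throughout your Jacquet-module step you implicitly assume the L-parameter is already known to be the same on both sides — you need this to know that the relevant partial Jacquet modules match under $\mathrm{TW}^*$ and that they are nonzero simultaneously. The paper establishes $\phi^\circ=\phi$ in the tempered case independently (Lemma~\ref{prop:phi-tempered}), by embedding $G\times\GL(2n)$ (resp.\ $G^{\pm}\times\GL(2n)$) as a Levi in a larger symplectic (resp.\ odd orthogonal) group and testing irreducibility of $i_{\tilde{Q}}(\pi\boxtimes\pi_{\GL})$ against $i_{Q^{\pm}}(\sigma\boxtimes\pi_{\GL})$, which via $\Theta$-lifting and Arthur/Ishimoto reduces to the Knapp--Stein $R$-group calculus. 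You should either include this or explain why it is not needed in your induction.

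\textbf{The Jacquet module is tempered, not square-integrable.} You write ``it is again square-integrable on a smaller metaplectic group with parameter $\phi'$.'' This is false in general: $\phi_-$ is of good parity but need not be discrete (for instance when $(\rho,a-2)$ was already in $\mathrm{Jord}(\phi)$, the multiplicity jumps). The induction must therefore be phrased over \emph{tempered} genuine representations of metaplectic groups of rank $< n$, and the full circle through Lemmas~\ref{prop:to-tempered}, \ref{prop:to-gp}, \ref{prop:to-discrete}, \ref{prop:to-Jacquet} is what makes the recursion well-founded.

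\textbf{What the metaplectic Jacquet module actually is.} You say ``the character $\chi$ restricts predictably'' and later argue via the ratio $\epsilon(\tfrac12,\rho\boxtimes r(a),\bpsi)/\epsilon(\tfrac12,\rho\boxtimes r(a-2),\bpsi)$. The crux, which follows from the ECR (Theorem~\ref{prop:Luo-endo}) plus Proposition~\ref{prop:alpha}, is the precise formula
\begin{equation*}
r^{\tilde{G}}_{\rho,(a-1)/2}\bigl(\pi_{\phi,\chi}\bigr) = \pi_{\phi_-,\,(\chi\nu_\phi)_-\,\nu_{\phi_-}},
\qquad
r^{G^{\pm}}_{\rho,(a-1)/2}\bigl(\sigma_{\phi,\chi^\circ}\bigr) = \sigma_{\phi_-,\,\chi^\circ_-},
\end{equation*}
i.e.\ the metaplectic side picks up a twist by $\nu_\phi$ \emph{and} a further twist by $\nu_{\phi_-}$, while the orthogonal side does not. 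Feeding this into the induction hypothesis (the smaller-rank statement says the $\mathrm{TW}^*$-image of $\pi_{\phi_-,\eta}$ is $\sigma_{\phi_-,\eta\nu_{\phi_-}}$) gives $\chi^\circ_- = (\chi\nu_\phi)_-\nu_{\phi_-}\cdot\nu_{\phi_-} = (\chi\nu_\phi)_-$: the two $\nu_{\phi_-}$ factors cancel identically. No $\epsilon$-factor ratio ever needs to be evaluated; the structure of the ECR makes the cancellation automatic. Your ``ratio'' heuristic is not wrong as a description of the discrepancy between $(\nu_\phi)_-$ and $\nu_{\phi_-}$ on the modified Jordan block, but if you tried to turn it into a proof without the precise formula above you would find yourself having to compute and compare $\epsilon$-factors at successive stages, which is unnecessary and error-prone. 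It is the exact form of the twist on the metaplectic Jacquet module — not a subsequent root-number computation — that closes the induction.
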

\begin{proof}
	Combine the upcoming Lemma \ref{prop:basic-case} (the basic case $n=1$ and $\pi$ square-integrable), Lemma \ref{prop:to-tempered} (reduction to $\pi$ tempered), Lemma \ref{prop:to-gp} (reduction to $\pi$ tempered and $\phi$ is of good parity), Lemma \ref{prop:to-discrete} (reduction to $\pi$ square-integrable) and Lemma \ref{prop:to-Jacquet} (reduction from the previous case to tempered case of smaller $n$).
\end{proof}

The proofs of the required Lemmas occupy the rest of this article.

The assertion $\phi^\circ = \phi$ of Theorem \ref{prop:main} is already proved in \cite[Corollary 21]{GS2} and \cite[Remark after Theorem 3.6]{TW18}. We give another proof in the tempered case below, which will be used in the proof of Theorem \ref{prop:main}.

\begin{lemma}\label{prop:phi-tempered}
	Let $\pi$ be a tempered irreducible genuine representation of $\tilde{G}$ lying in $\mathcal{G}_{\bpsi}^{\pm}$. Then the representation $\sigma = \mathrm{TW}^*(\pi)$ of $G^{\pm}(F)$ has the same L-parameter as $\pi$.
\end{lemma}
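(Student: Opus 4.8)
The plan is to reduce the tempered case to the square-integrable case and then invoke the endoscopic character relations on both sides. First I would use the compatibility of $\mathrm{TW}^*$ with parabolic induction (Theorem \ref{prop:compatibility-1}) together with the Knapp--Stein / Langlands classification of tempered representations. Concretely, a tempered $\pi \in \mathcal{G}_{\bpsi}^{\pm}$ is a constituent of some $i_{\tilde P}(\pi_{\tilde M})$ where $\tilde M = \Mp(W^\flat)\times\prod_k\GL(n_k,F)$ and $\pi_{\tilde M} = \pi_0\boxtimes\bigboxtimes_k\rho_k$ with $\pi_0$ square-integrable and each $\rho_k$ a discrete series of $\GL(n_k,F)$; moreover $\pi$ is determined inside this induced module by the Knapp--Stein $R$-group data. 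The matching functor $\mathrm{TW}^*$ carries $i_{\tilde P}(\pi_{\tilde M})$ to $i_{P^{\pm}}(\mathrm{TW}^{\tilde M,*}(\pi_{\tilde M}))$, and since $\mathrm{TW}$ is an isomorphism of Hilbert algebras it respects the $C^*$-algebraic/Plancherel structure, hence the $R$-groups and their decompositions. Therefore it suffices to (a) know the lemma for square-integrable $\pi$ and for $\GL$-factors (where LLC is tautological), and (b) check that the L-parameter of a tempered member of an induced packet is the direct sum of the parameters of the inducing data on each side, which is part of the construction of the tempered LLC for both $\tilde G$ and $G^{\pm}$.

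For the square-integrable case I would argue via characters and endoscopy rather than by a direct type-theoretic computation. Let $\pi \in \Pi_{2,-}(\tilde G)$ with parameter $\phi$, and $\sigma = \mathrm{TW}^*(\pi) \in \Pi_2(G^{\pm})$ with parameter $\phi^\circ$. Since $\mathrm{TW}$ is an isomorphism of Hilbert algebras, it identifies the elliptic tempered traces (as distributions on the respective elliptic sets, modulo the matching of orbital integrals encoded in the comparison of $C^\infty_{c,\asp}(\tilde G)$-spaces with $C^\infty_c(G^{\pm}(F))$ coming from \cite{GS2,TW18,CL23}). Combining Luo's ECR (Theorem \ref{prop:Luo-endo}) on the metaplectic side with Arthur's/Ishimoto's stable characters on the $G^{\pm}$-side, the stable character $S\Theta^{G^{\pm}}_{\phi^\circ}$ and the stable distribution underlying $T_{\phi,1}$ (the $s=1$ transfer, for which the $\epsilon$-factor is trivial) must correspond under $\mathrm{TW}$ up to an explicitly known scalar. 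Linear independence of characters of irreducibles forces $\phi^\circ$ and $\phi$ to have the same stable packet of square-integrable representations, hence $\phi^\circ = \phi$; the component-group characters need not match here and are irrelevant to this lemma.

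The main obstacle will be step (b) together with the bookkeeping of normalizations: I must verify that the isomorphism $\mathrm{TW}$ of Hilbert algebras really does transport the Bernstein--Zelevinsky / Plancherel data finely enough to read off L-parameters, and that the compatibility with normalized parabolic induction in Theorem \ref{prop:compatibility-1} (note the unramified-twist and Levi identifications of \S\ref{sec:compatibilities}) lines up with the way tempered parameters are assembled from discrete ones. In particular one has to be careful that the $|2|_F^{t(w)/2}$ and $(-q^{-1})^{t(w)}$ factors from Theorem \ref{prop:compatibility-2}, while they do affect intertwining operators and hence could in principle perturb $R$-group structure, are in fact positive (or unit) scalars that do not change the $R$-group or which constituent is which. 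Once that is pinned down, the reduction to the square-integrable case is clean, and the square-integrable case follows from the character identities as above; the finer statement $\chi^\circ = \chi\nu_\phi$ is then deferred to the later sections.
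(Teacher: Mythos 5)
Your reduction from tempered to square-integrable is essentially the same as the paper's second step, but you are overcomplicating it: once $\pi$ embeds into $i_{\tilde P}(\pi_0)$ with $\pi_0$ square-integrable, the L-parameter of $\pi$ is the image under $\Phi(\tilde M)\to\Phi(\tilde G)$ of that of $\pi_0$, independently of which constituent $\pi$ is. You therefore do not need to track the $R$-group data, nor worry about whether the $|2|_F^{t(w)/2}$ or $(-q^{-1})^{t(w)}$ factors perturb it; Theorem \ref{prop:compatibility-1} plus the basic compatibility of LLC with induction (\cite[Theorem 8.1]{GS1}) already suffices for that step.

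The genuine gap is in your treatment of the square-integrable case. You assert that $\mathrm{TW}$ "identifies the elliptic tempered traces, modulo the matching of orbital integrals encoded in ... \cite{GS2,TW18,CL23}." No such matching of orbital integrals between $\tilde G$ and $G^{\pm}$ exists in those references, and it cannot be extracted from the Hilbert algebra structure of the Iwahori--Hecke isomorphism: $\mathrm{TW}$ comes with no correspondence of conjugacy classes at all (note that $G^-$ is an inner form of $G^+$, not an endoscopic group of $\tilde G$, so there is not even an a priori candidate transfer). Relating $\mathrm{TW}^*$ to the endoscopic transfer $\trans_{\mathbf{G}^!,\tilde G}$ is precisely the problem the whole paper is designed to solve --- the introduction stresses that there is "no apparent link" between them --- so your argument is circular at this point. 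The paper instead settles the square-integrable case by a different device: embed $G\times\GL(2n)$ as a Levi of a parabolic $Q$ in a larger symplectic group $S$, and use the criterion (from Arthur/Ishimoto, read through Gan--Savin's $\Theta$-lift) that $i^{\tilde S}_{\tilde Q}(\pi\boxtimes\pi_{\GL})$ is irreducible exactly when the $\GL(2n)$-parameter of $\pi_{\GL}$ equals $\phi$. Theorem \ref{prop:compatibility-1}, applied at the level of $\tilde S$ and $S^{\pm}$, carries this irreducibility across to $i^{S^{\pm}}_{Q^{\pm}}(\sigma\boxtimes\pi_{\GL})$, and the analogous criterion on the orthogonal side then forces $\phi^\circ=\phi$. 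This trick converts the L-parameter equality into a categorical statement (preservation of irreducibility under induction) which is what $\mathrm{TW}^*$ actually gives you, avoiding any character-level comparison.
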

\begin{proof}
	To begin with, assume $\pi$ is square-integrable. Then so is $\sigma$. Denote by $\phi$ (resp.\ $\phi^\circ$) the L-parameter of $\pi$ (resp.\ $\sigma$). Let $\pi_{\GL}$ (resp.\ $\pi^\circ_{\GL}$) be the tempered representation of $\GL(2n, F)$ parameterized by $\phi$ (resp.\ $\phi^\circ$). Embed $G \times \GL(2n)$ (resp.\ $G^{\pm} \times \GL(2n)$) as a Levi factor of some parabolic $Q$ (resp.\ $Q^{\pm}$) in a larger symplectic group $S$ (resp.\ special odd orthogonal group $S^{\pm}$). Claim: $i^{\tilde{S}}_{\tilde{Q}}(\pi \boxtimes \pi_{\GL})$ is irreducible.
	
	Indeed, by \cite[Corollary 8.3]{GS1} this is equivalent via $\Theta$-lifting to the corresponding assertion for square-integrable representations of $G^\epsilon(F)$, where $\epsilon = \chi(-1)$ (with the $-1 \in \Sp(2n, \CC)$). The case for $G^\epsilon$ follows from Arthur's description of the Knapp--Stein $R$-group on the dual side. Detailed arguments can be found in \cite[Lemma 4.2.1]{Li24b} when $\epsilon = +$ (see also \cite[Theorem 11.1]{Xu17}); as for $\epsilon = -$, simply replace the reference in \textit{loc.\ cit.}\ to \cite{Ar13} by Ishimoto's work \cite{Is24}.
	
	By the claim and Theorem \ref{prop:compatibility-1}, $i^{S^{\pm}}_{Q^{\pm}}(\sigma \boxtimes \pi_{\GL})$ is irreducible as $Q \leftrightarrow Q^{\pm}$. The aforementioned result \cite[Lemma 4.2.1]{Li24b} and its variant for non-split $\SO$ imply that irreducibility occurs exactly when $\pi_{\GL} = \pi^\circ_{\GL}$. Hence $\phi = \phi^\circ$.
	
	Now assume $\pi$ is tempered and embed it into $i_{\tilde{P}}(\pi_0)$ for some $P = MU$ as above and some square-integrable $\pi_0$ on $\tilde{M}$. Theorem \ref{prop:compatibility-1} implies that $\sigma$ embeds into $i_{P^\pm}(\sigma_0)$ where $P \leftrightarrow P^{\pm}$ and $\sigma_0 = \mathrm{TW}^{\tilde{M}, *}(\pi_0)$. Again, standard properties of LLC assure that $\phi$ (resp.\ $\phi^\circ$) is the image of the L-parameter of $\pi_0$ (resp.\ $\sigma_0$); see \cite[Theorem 8.1]{GS1} for the metaplectic case. However, $\pi_0$ and $\sigma_0$ have the same L-parameter by the previous step.
\end{proof}

In all the statements above, one adopts Arthur's endoscopic classification of representations of $G^{\pm}(F)$ (see \cite{Ar13, Is20}) instead of Lusztig's (see \cite[Corollary 6.5]{Lu95}). These two parametrizations are expected to agree. We do not need such results in this article; nevertheless, the proof leads to the following weak result as a by-product.

\begin{proposition}\label{prop:phi-nr}
	Let $\pi$ be an irreducible genuine representation of $\tilde{G}$ lying in $\mathcal{G}_{\bpsi}^{\pm}$, then its L-parameter is trivial on $I_F \times \{1\} \subset \mathcal{L}_F$.
\end{proposition}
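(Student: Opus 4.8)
The plan is to prove Proposition \ref{prop:phi-nr} as a by-product of the proof of Theorem \ref{prop:main}: run the same induction on the rank $n$ and the same cascade of reductions (Lemmas \ref{prop:basic-case}, \ref{prop:to-tempered}, \ref{prop:to-gp}, \ref{prop:to-discrete}, \ref{prop:to-Jacquet}), and at each stage carry along the stronger assertion that every $\rho$ appearing in $\mathrm{Jord}(\phi)$ (Definition \ref{def:Jord}) is an \emph{unramified} character of $\Weil{F}$, i.e.\ $d_\rho = 1$ and $\rho|_{I_F} = \mathbf{1}$. This is equivalent to the statement of the proposition, since $(\rho \boxtimes r(a))|_{I_F \times \{1\}} = \rho|_{I_F}$. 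The one local input that drives everything is elementary: a supercuspidal representation $\rho$ of $\GL(d_\rho, F)$ lies in the Iwahori-spherical Bernstein block if and only if $d_\rho = 1$ and $\rho$ is unramified; equivalently, a representation of a product $\prod_k \GL(n_k, F)$ with an Iwahori-fixed vector has supercuspidal support consisting of unramified characters, hence L-parameter trivial on $I_F \times \{1\}$.

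First I would settle the base case, $n = 1$ and $\pi$ square-integrable. Here $\sigma = \mathrm{TW}^*(\pi)$ is a square-integrable object of the Iwahori-spherical block of $\SO(V^{\pm})$ with $\dim V^{\pm} = 3$, and, as in the catalogue of \cite{GanP}, its L-parameter has the form $\eta \boxtimes r(2)$ with $\eta$ an unramified character of $\Weil{F}$ of order dividing $2$ (the Steinberg representation of $\PGL_2(F)$ and its unramified twist in the $+$ case, the trivial representation of the anisotropic form in the $-$ case); in particular it is trivial on $I_F$. Since $\mathrm{TW}^*$ preserves L-parameters (Lemma \ref{prop:phi-tempered}), so is the parameter of $\pi$.

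Next I would verify that each reduction step propagates triviality on $I_F$, using throughout that $\mathrm{TW}^*$ and its avatar $\mathrm{TW}^{\tilde{M}, *}$ match the block decompositions \eqref{eqn:H-decomp} on Levi subgroups and are compatible with Jacquet modules, parabolic induction (Theorem \ref{prop:compatibility-1}) and Langlands quotients. In the reduction to the tempered case, $\pi$ is a Langlands quotient built from $\phi = \phi_0 \oplus \phi_1 \oplus \check{\phi}_1$, where $\phi_1$ is the parameter of a representation of some $\prod_k \GL(n_k, F)$ in its Iwahori-spherical block (so $\phi_1|_{I_F \times \{1\}} = \mathbf{1}$), while $\phi_0$ is the parameter of a tempered representation in a metaplectic block $\mathcal{G}_{\bpsi}^{\tilde{M}, \pm}$, handled by the tempered case. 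The reduction to good parity has the same shape, the non-good-parity part of $\phi$ again coming from a $\GL$-factor in the Iwahori-spherical block. In the reduction to the square-integrable case via Ishimoto's tempered LIR, a good-parity tempered $\pi$ embeds into $i_{\tilde{P}}(\pi_2^{\flat} \boxtimes \bigotimes_k \tau_k)$ with each $\tau_k$ an unramified twist of a Steinberg representation of $\GL(n_k, F)$ and $\pi_2^{\flat}$ square-integrable in a metaplectic block, leaving only the square-integrable case. Finally, for square-integrable $\pi$ of good parity with $n > 1$, the partial Jacquet modules of Lemma \ref{prop:to-Jacquet} lie in blocks of the form $\mathcal{G}_{\bpsi}^{\tilde{M}, \pm}$ by Theorem \ref{prop:compatibility-1}; their $\GL$-components therefore sit in Iwahori-spherical blocks, which forces the supercuspidal $\rho$ split off from each Jordan block of $\phi$ to be an unramified character of $\GL(1, F)$, while the remaining metaplectic factors have strictly smaller rank and are covered by the induction hypothesis.

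I expect the work to be bookkeeping rather than a genuine obstacle: the one thing that really must be checked is that, at every step of the argument for Theorem \ref{prop:main}, the $\GL$-factors that are peeled off genuinely stay in the Iwahori-spherical blocks --- this is exactly the content of the compatibility of $\mathrm{TW}$ with \eqref{eqn:H-decomp} and of Theorem \ref{prop:compatibility-1} --- and, in particular, that the Jacquet-module computations of \S\ref{sec:Jacquet} never produce a supercuspidal $\rho$ with $d_\rho > 1$ or $\rho|_{I_F} \neq \mathbf{1}$. Granting this, Proposition \ref{prop:phi-nr} drops out with no extra input beyond the base case.
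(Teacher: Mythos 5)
Your proof is correct and follows essentially the same route as the paper: induction on the rank $n$, with the elementary observation that any $\GL$-factor peeled off stays in an Iwahori-spherical block (hence has parameter trivial on $I_F$), the explicit $n=1$ catalogue (Remark~\ref{rem:basic-L}) as the base case, and the Jacquet-module descent of Lemmas~\ref{prop:Jord-r}/\ref{prop:r-transfer} for $n>1$. The one minor overcall is that you route through the good-parity and LIR reductions (Lemmas~\ref{prop:to-gp} and \ref{prop:to-discrete}); for the L-parameter assertion alone these are superfluous --- the paper reduces to the square-integrable case directly via the embeddings appearing in the proofs of Lemmas~\ref{prop:phi-tempered} and \ref{prop:to-tempered}, since one only needs to track where $\phi$ goes, not how $\chi$ changes.
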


It is equivalent to the versions for $\mathcal{G}^{\pm}$. Due to the lack of an adequate reference for Arthur's parametrization, we shall give proof for $\mathcal{G}_{\bpsi}^{\pm}$ in \S\ref{sec:completion-proof}.

\section{The basic case}\label{sec:basic}
Recall that $\tilde{G} = \Mp(W)$ is the metaplectic group of rank $n := \frac{1}{2} \dim W$. In this section, we treat the basic case of Theorem \ref{prop:main} with $n=1$ and $\pi \in \Pi_{2, -}(\tilde{G})$ (i.e.\ square-integrable).

\begin{lemma}\label{prop:basic-case}
	Assume $n=1$. Then the statement $(\phi^\circ, \chi^\circ) = (\phi, \chi\nu_\phi)$ in Theorem \ref{prop:main} holds for all $\pi \in \Pi_{2, -}(\tilde{G})$.
\end{lemma}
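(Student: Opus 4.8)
The basic case $n=1$ should be settled by a direct, completely explicit computation, since $\Mp(2)$ and $\SO(3)^{\pm}$ are small enough that all objects are known. The plan is to enumerate the discrete-series L-parameters $\phi \in \Phi_{2,\mathrm{bdd}}(\tilde G)$, work out the packets $\Pi_\phi$ on the metaplectic side and $\Pi^{G^{\pm}}_\phi$ on the orthogonal side, pin down the bijection $\mathrm{TW}^*$ on these packets using its compatibilities (it preserves square-integrability, and by Lemma \ref{prop:phi-tempered} it preserves L-parameters), and then read off the character correction $\chi \mapsto \chi\nu_\phi$ by comparing the two parametrizations against the endoscopic character relations. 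First I would recall that for $n=1$, $\tilde G^\vee = \Sp(2,\CC) = \SL(2,\CC)$, and a discrete parameter has $I = I^+$ with all $m_i = 1$; so either $\phi = \phi_1$ is a single $2$-dimensional irreducible symplectic representation of $\mathcal L_F$ (then $\EuScript S_\phi \simeq \bmu_2$), or $\phi = \chi_1 \oplus \chi_2$ is a sum of two distinct quadratic characters composed with a suitable embedding (then $\EuScript S_\phi \simeq \bmu_2$ as well, since one of the two orthogonal-type... no — wait, in the $n=1$, discrete case both summands must be symplectic, which for $1$-dimensional pieces forces the $\SL(2,\CC)$-part, i.e. $\phi = \mu\boxtimes r(2)$ with $\mu$ quadratic). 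I would simply list these families and handle each.

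For each such $\phi$ the metaplectic packet $\Pi_\phi = \{\pi_{\phi,\chi}\}$ is indexed by $\EuScript S_\phi^\vee \simeq \bmu_2$, and the identification is fixed by Gan--Savin's LLC \eqref{eqn:LLC-Mp} via $\Theta$-lifting; concretely, the two members are the two irreducible square-integrable genuine representations of $\Mp(2)$ in the relevant block, one of which (for the parameter giving the Weil representation summand) is $\omega_\bpsi^+$ or $\omega_\bpsi^-$. On the orthogonal side, $G^+ = \mathrm{PGL}(2)$ and $G^- $ is the unit group of a quaternion algebra modulo center, and $\Pi^{G^{\pm}}_\phi$ is described by Arthur/Ishimoto; the $\pm$ alternative corresponds exactly to the value $\chi(-1)$, as in the Jacquet--Langlands picture. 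The point is that $\mathrm{TW}^*$, being an equivalence of blocks preserving square-integrability and (by Lemma \ref{prop:phi-tempered}) L-parameters, must send $\Pi_\phi$ bijectively to $\Pi^{G^{\pm}}_\phi$; the only freedom is a possible swap of the two members, i.e. $\chi^\circ = \chi$ or $\chi^\circ = \chi\nu_\phi$ (note $\nu_\phi$ is the unique nontrivial character of $\bmu_2$ precisely when the symplectic root number $\epsilon(\tfrac12,\phi_i,\bpsi) = -1$). So the whole content is to decide, for each $\phi$, whether this swap occurs, and to check it is governed by $\epsilon(\tfrac12,\phi,\bpsi)$.

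To make that decision I would use the endoscopic character relations on both sides. On the metaplectic side, Theorem \ref{prop:Luo-endo} gives $T_{\phi,x} = \sum_\chi \chi(x)\Tr(\pi_{\phi,\chi})$ with $T_{\phi,x} = \epsilon(\phi^{s=-1})\,\trans_{\mathbf G^!,\tilde G}(S\Theta^{G^!}_{\phi^!})$; for $n=1$ the nontrivial $s$ gives an endoscopic group $\SO(1)\times\SO(3) = \SO(3)$, and $\phi^{s=-1}$ is all of $\phi$, so the extra factor is exactly $\epsilon(\tfrac12,\phi,\bpsi) = \nu_\phi(-1)$. On the $G^{\pm}$ side, Arthur's ECR has no such factor. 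Matching the two stable distributions under the transfer $\trans$ (which, composed suitably with $\mathrm{TW}$, is the identity-type comparison one gets from the compatibilities — here one invokes \cite{GanP} for the explicit $n=1$ catalog, as the excerpt suggests) forces $\chi^\circ(x) = \chi(x)\,\nu_\phi(x)$ for $x = -1$, hence $\chi^\circ = \chi\nu_\phi$.

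\textbf{Main obstacle.} The delicate point is not any single character computation but keeping all normalizations consistent: the Weil-representation conventions defining $\tilde G$, the choice of $\bpsi$ with $\bpsi|_{4\mathfrak o} = \mathbf 1$, the Langlands--Shelstad splittings over the Levi and the representatives $\tilde w$ versus $\tilde w_{\mathrm{Ishi}}$ (Proposition \ref{prop:gamma-computation}), the Haar-measure normalizations in the transfer $\trans_{\mathbf G^!,\tilde G}$, and the precise identification of $\EuScript S_\phi^\vee$ with $\bmu_2^{I^+}$ used in Definition \ref{def:nu-phi}. A sign error in any of these flips the answer between $\chi^\circ = \chi$ and $\chi^\circ = \chi\nu_\phi$. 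I would therefore anchor the computation to one fully worked example — say $\phi = \mathbf 1 \boxtimes r(2)$, whose packet contains the even and odd Weil representations and whose root number $\epsilon(\tfrac12,\mathbf 1\boxtimes r(2),\bpsi)$ is classically computable — verify the formula there against \cite{GanP}, and then propagate to the general quadratic-character case $\phi = \mu\boxtimes r(2)$ by a twisting argument, using that both sides transform compatibly under twisting by $\mu$ and that $\epsilon(\tfrac12,\mu\boxtimes r(2),\bpsi)$ changes accordingly.
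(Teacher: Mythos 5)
Your high-level plan (enumerate the finitely many square-integrable $\phi$ for $n=1$, use that $\mathrm{TW}^*$ preserves square-integrability and L-parameters, read off the character shift) is sound, and the fall-back route of explicit verification against the tables in \cite{GanP} is in fact a valid alternative, acknowledged by the paper itself in Remark \ref{rem:basic-L}. However, the proposal as written has a conceptual gap and a factual error, and it also misses the simplification that makes the paper's proof so short.

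\textbf{The ECR step is circular as stated.} You propose to ``match the two stable distributions under the transfer $\trans$''. But $\trans_{\mathbf G^!,\tilde G}(S\Theta^{G^!}_{\phi^!})$ is a distribution on $\tilde G$, while Arthur's stable character lives on $G^{\pm}$; there is no endoscopic transfer between $\tilde G$ and $G^{\pm}$, and no a priori relation between $\trans_{\mathbf G^!,\tilde G}$ and the categorical equivalence $\mathrm{TW}^*$. Determining how $\mathrm{TW}^*$ interacts with these transfers \emph{is} the content of Theorem \ref{prop:main}; you cannot assume a ``matching'' as an input to the $n=1$ case. Once you acknowledge that the decision is actually made by consulting the catalog, the ECR apparatus does no work. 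A cleaner way to see why there is no ambiguity to resolve in the first place: for $n=1$ and $\phi$ discrete, $\Pi_\phi$ has exactly two members, one in $\mathcal G_{\bpsi}^+$ and one in $\mathcal G_{\bpsi}^-$; likewise $|\Pi^{G^+}_\phi|=|\Pi^{G^-}_\phi|=1$. Since $\mathrm{TW}^*$ is defined block by block and preserves L-parameters (Lemma \ref{prop:phi-tempered}), there is no ``possible swap'' of the two members at all. The entire content is bookkeeping: which $\chi$ Gan--Savin's LLC \eqref{eqn:LLC-Mp} attaches to the $+$-block member and which to the $-$-block member, versus the fixed convention $\chi^\circ = \mathbf 1$ (resp.\ $\neq\mathbf 1$) for $G^+$ (resp.\ $G^-$).

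\textbf{Factual error.} The packet of $\phi = \mathbf 1\boxtimes r(2)$ contains $\omega_\bpsi^-$ and the unramified Steinberg representation $\mathrm{st}_{\bpsi,\mathbf 1}$; it does not contain $\omega_\bpsi^+$, which is non-tempered and so cannot appear among $\pi\in\Pi_{2,-}(\tilde G)$.

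\textbf{What the paper does instead.} Rather than any case analysis, the paper invokes the central sign formula from Gan--Savin: $(\chi\nu_\phi)(-1) = \omega_\pi(-1)$, where $-1$ is the canonical lift of $-1\in G(F)$ to $\tilde G$ acting by $\pm\identity$ on $\omega_\bpsi^{\pm}$. Then $\omega_\pi(-1)$ is read off uniformly from the block: $+1$ in $\mathcal G_\bpsi^+$ (since $\pi\hookrightarrow i_{\tilde B^{\leftarrow}}(\eta)$ with $\eta$ unramified), $-1$ in $\mathcal G_\bpsi^-$ (since $\pi=\omega_\bpsi^-$). Comparing with $\chi^\circ(-1)=\pm 1$ gives $\chi^\circ = \chi\nu_\phi$ with no enumeration of parameters, no $\epsilon$-factor computation, and no normalizations to track beyond the definition of the canonical $-1\in\tilde G$. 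This is the ingredient your proposal is missing: the input that actually decides the sign is a structural feature of the metaplectic LLC (central signs), not a distribution-theoretic comparison or a catalog look-up, although the latter works too.
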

\begin{proof}
	Since $\phi \in \Phi_{2, \mathrm{bdd}}(\tilde{G})$, it is a simple $2$-dimensional self-dual representation of $\mathcal{L}_F$ of symplectic type, hence $|I| = |I^+| = 1$, $\EuScript{S}_\phi = S_\phi = \bmu_2$ and
	\[ \nu_\phi(-1) = \epsilon\left(\frac{1}{2}, \phi, \bpsi \right). \]
	
	By Lemma \ref{prop:phi-tempered}, we have $\phi^\circ = \phi$. Vogan's LLC for $G^{\pm}$ has the property that $\chi^\circ$ is the trivial (resp.\ non-trivial) element of $\EuScript{S}_\phi^\vee$ in the $+$ (resp.\ $-$) case. See eg.\ \cite[\S 10]{GGP1}.
	
	On the side of $\tilde{G}$, let us begin with the case of general $n$. Recall that since we are working with eightfold coverings, there is a canonical preimage of $-1 \in G(F)$ in $\tilde{G}$, still denoted by $-1$ here, which satisfies $(-1)^2 = 1$ and $\omega^{\pm}_{\bpsi}(-1) = \pm\identity$. In fact, under the embedding
	\[ (F^\times)^n \times \bmu_8 \simeq T(F) \times \bmu_8 \simeq \tilde{T} \hookrightarrow \tilde{G}, \]
	the $-1 \in \tilde{G}$ is the image of $(\underbracket{-1, \ldots, -1}_{n\;\text{terms}}, 1)$, as one sees from a computation in the Schrödinger model. We refer to \cite[Définition 2.8]{Li11} and \cite[Chapitre 2, II.6]{MVW87} for details.
	
	We now specialize to $n=1$. By \cite[Theorem 1.4 (ii)]{GS1} (due to Waldspurger for $n=1$), the value of $\chi \nu_\phi$ at $-1 \in \EuScript{S}_\phi$ equals $\omega_\pi(-1)$, known as the central sign of $\pi$.
	
	We use the description of blocks $\mathcal{G}_{\bpsi}^{\pm}$. In the $+$ case, $\pi$ embeds into $i_{\tilde{B}^{\leftarrow}}(\eta)$ for some unramified character $\eta: F^{\times} \to \mathbb{C}^{\times}$ of $T(F)$, inflated to a genuine character of $\tilde{T} \simeq T(F) \times \bmu_8$. Hence $\omega_\pi(-1) = \eta(-1) = 1$.
	
	In the $-$ case, the only possibility is $\pi = \omega_{\bpsi}^-$. We have seen that $\omega_{\bpsi}^-(-1) = -\mathrm{id}$, hence $\omega_\pi(-1) = -1$.
	
	All in all, we obtain $\chi^\circ = \chi \nu_\phi$.
\end{proof}

\begin{remark}\label{rem:basic-L}
	As $n=1$, elements in $\Pi_{2, -}(\tilde{G})$ can be explicitly classified as follows (see for example \cite[\S 2.17]{GanP} for details):
	\begin{itemize}
		\item Steinberg representations $\mathrm{st}_{\bpsi, \chi}$, which is the socle of $i_{\tilde{B}^{\leftarrow}}(\chi|\cdot|^{1/2})$ (note that $B^{\leftarrow} = B^{\rightarrow}$ when $n=1$);
		\item odd Weil representations $\omega_{\bpsi_a}^-$, which are all supercuspidal;
		\item the other genuine supercuspidal representations.
	\end{itemize}
	Here $\chi$ denotes a quadratic character of $F^{\times}$ and $a \in F^{\times}$. The Weil representations $\omega_{\bpsi_a}^{\pm}$ are defined using $\bpsi_a$, but their isomorphism classes depend only on the coset $a F^{\times 2}$.
	
	Among them, those $\pi$ lying in $\mathcal{G}_{\bpsi}^+$ (resp.\ $\mathcal{G}_{\bpsi}^-$) are $\mathrm{st}_{\bpsi, \chi}$ with $\chi$ unramified (resp.\ just $\omega_{\bpsi}^-$). To see this, simply describe $\mathcal{G}_{\bpsi}^{\pm}$ in terms of cuspidal supports (see \S\ref{sec:blocks}), and use the fact \cite[\S 2.7]{GanP} that $\omega_{\bpsi_a}^- \simeq \omega_{\bpsi_b}^-$ if and only if $a/b \in F^{\times 2}$.
	
	Recall that the LLC for $\tilde{G}$ is defined through $\Theta$-lifting. Consulting the table in \cite[\S 2.17]{GanP}, the L-parameter of $\pi = \pi_{\phi, \chi}$ in $\mathcal{G}_{\bpsi}^{\pm}$ is seen to be $\phi = \xi \boxtimes r(2)$ where $\xi = \chi$ (resp.\ $\xi = \mathbf{1}$) when $\pi = \mathrm{st}_{\bpsi, \chi}$ (resp.\ $\pi = \omega_{\bpsi}^-$), and $\chi \in \EuScript{S}_\phi^\vee$ is determined by $\chi(-1) = 1$ (resp.\ $-1$) when $\pi$ is $\Theta$-lifted from $\SO(V^+)$ (resp.\ $\SO(V^-)$).
	
	In this way, one can also prove Lemma \ref{prop:basic-case} by explicit verification.
\end{remark}

\section{Reduction to tempered case}\label{sec:reduction-to-tempered}
For general $n$, we shall reduce the main Theorem \ref{prop:main} to the tempered case via Langlands quotients, and then to the case of good parity.

\begin{lemma}\label{prop:to-tempered}
	If the statement $(\phi^\circ, \chi^\circ) = (\phi, \chi\nu_\phi)$ in Theorem \ref{prop:main} holds for all tempered irreducible genuine representations of metaplectic groups of rank $\leq n$, then it holds for $\tilde{G}$ in general. Here the representations are assumed to belong to $\mathcal{G}_{\bpsi}^{\pm}$ or its avatars.
\end{lemma}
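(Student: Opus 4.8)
The plan is to run the standard Langlands-quotient reduction on both sides of $\mathrm{TW}^*$ simultaneously, exploiting the compatibility of $\mathrm{TW}^*$ with parabolic induction (Theorem \ref{prop:compatibility-1}) and the fact that both the metaplectic LLC (via Gan--Savin) and the LLC for $G^\pm$ are compatible with the formation of Langlands quotients, as recalled in \S\ref{sec:LLC} and \S\ref{sec:main}. Concretely, let $\pi = \pi_{\phi,\chi}$ be irreducible in $\mathcal{G}_{\bpsi}^\pm$. Write the Langlands data: there is a parabolic $\tilde{P} = \tilde{M}U$ of $\tilde{G}$ with $M = \Sp(W^\flat) \times \prod_{k=1}^r \GL(n_k)$, a tempered $\pi_0 \in \Pi_{\mathrm{temp},-}(\tilde{G}^\flat)$ lying in the relevant avatar $\mathcal{G}_{\bpsi}^{\tilde{G}^\flat,\pm}$, and a $P$-positive unramified twist, so that $\pi$ is the Langlands quotient $J_{\tilde{P}}(\pi_{\tilde{M}})$ with $\pi_{\tilde{M}} = \pi_0 \boxtimes \pi_{\GL}$. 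Correspondingly $\phi = \phi_0 \oplus (\phi_1 \oplus \check\phi_1)$ with $\phi_0 \in \Phi_{\mathrm{bdd}}(\tilde{G}^\flat)$, and $\EuScript{S}_{\phi_0} \xrightarrow{\sim} \EuScript{S}_\phi$ as recalled in \S\ref{sec:LLC}, so that $\chi$ corresponds to a character $\chi_0$ of $\EuScript{S}_{\phi_0}$.

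First I would observe that $\mathrm{TW}^*$ sends $\pi = J_{\tilde{P}}(\pi_{\tilde{M}})$ to $\sigma = J_{P^\pm}(\sigma_{M^\pm})$, where $P \leftrightarrow P^\pm$ under \eqref{eqn:P-corr}, $M \leftrightarrow M^\pm$, and $\sigma_{M^\pm} = \mathrm{TW}^{\tilde{M},*}(\pi_{\tilde{M}})$. This uses Theorem \ref{prop:compatibility-1}: $\mathrm{TW}^*$ and $\mathrm{TW}^{\tilde{M},*}$ intertwine $i_{\tilde{P}}$ with $i_{P^\pm}$, so $i_{P^\pm}(\sigma_{M^\pm}) \simeq \mathrm{TW}^*(i_{\tilde{P}}(\pi_{\tilde{M}}))$ as objects of $\mathcal{G}^\pm$; since $\mathrm{TW}^*$ is an equivalence of abelian categories it matches the unique irreducible quotient with the unique irreducible quotient. (Here one must check that the Langlands quotient of $i_{P^\pm}(\sigma_{M^\pm})$ genuinely \emph{is} a Langlands quotient in the standard sense — i.e. that the unramified twist on the $\GL$-factor is still $P^\pm$-positive — which is immediate from the matching of $A_M$-roots with $A_{M^\pm}$-roots up to the factor $2$ recorded in \S\ref{sec:compatibilities}, since positivity is unaffected by rescaling.) Now $\sigma = \sigma_{\phi^\circ,\chi^\circ}$ where, by compatibility of the LLC for $G^\pm$ with Langlands quotients, $\phi^\circ = \phi^\circ_0 \oplus (\phi_1 \oplus \check\phi_1)$ with $\phi^\circ_0$ the parameter of the tempered part $\sigma_0 := \mathrm{TW}^{\tilde{G}^\flat,*}(\pi_0)$, and $\chi^\circ$ corresponds under $\EuScript{S}_{\phi^\circ_0} \xrightarrow{\sim} \EuScript{S}_{\phi^\circ}$ to the character $\chi^\circ_0$ attached to $\sigma_0$.

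Next, since $\tilde{G}^\flat = \Mp(W^\flat)$ has rank $\leq n$ and $\pi_0$ is tempered, the hypothesis of the Lemma applies to $\pi_0$ (in the avatar $\mathcal{G}_{\bpsi}^{\tilde{G}^\flat,\pm}$), giving $\phi^\circ_0 = \phi_0$ and $\chi^\circ_0 = \chi_0 \nu_{\phi_0}$. From $\phi^\circ_0 = \phi_0$ we immediately get $\phi^\circ = \phi$. For the component-group character, I would use that the $\GL$-summands $\phi_1 \oplus \check\phi_1$ contribute nothing to $I^+$ (they are either non-self-dual or come in dual pairs), hence $\nu_\phi$ is the image of $\nu_{\phi_0}$ under $\EuScript{S}_{\phi_0}^\vee \xrightarrow{\sim} \EuScript{S}_\phi^\vee$ — this is transparent from Definition \ref{def:nu-phi}, where $\nu_\phi$ is built componentwise over $i \in I^+$ from the $\epsilon$-factors $\epsilon(\tfrac12,\phi_i,\bpsi)$, and $I^+(\phi) = I^+(\phi_0)$. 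Transporting $\chi^\circ_0 = \chi_0\nu_{\phi_0}$ through the identifications $\EuScript{S}_{\phi_0}\xrightarrow{\sim}\EuScript{S}_\phi$ and $\EuScript{S}_{\phi^\circ_0}\xrightarrow{\sim}\EuScript{S}_{\phi^\circ}$ (which are the same map under $\phi^\circ = \phi$, $\phi^\circ_0 = \phi_0$) then yields $\chi^\circ = \chi\nu_\phi$, as desired.

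The one genuinely delicate point — the "main obstacle" — is bookkeeping the identification of component groups consistently across the four objects $\phi_0, \phi, \phi^\circ_0, \phi^\circ$ and across the two sides of $\mathrm{TW}^*$, so that "$\chi$ corresponds to $\chi_0$" on the metaplectic side and "$\chi^\circ$ corresponds to $\chi^\circ_0$" on the orthogonal side are compatible under the common isomorphism $\EuScript{S}_{\phi_0}\xrightarrow{\sim}\EuScript{S}_\phi$. This is where the precise normalization of the LLC via the paradigm of \cite[\S 7.2]{SZ18} (for $\tilde{G}$) and its analogue for $G^\pm$ must be invoked: both are set up so that the Langlands-quotient construction induces the \emph{identity} on component-group characters under $\EuScript{S}_{\phi_0}^\vee = \EuScript{S}_\phi^\vee$. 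Granting that — which is part of the packaging of \eqref{eqn:LLC-Mp} and \eqref{eqn:LLC-SO-chi} already recalled — everything else is formal. The temperedness, square-integrability and good-parity reductions that follow (Lemmas \ref{prop:to-gp}, \ref{prop:to-discrete}, \ref{prop:to-Jacquet}) are logically downstream and are not needed here.
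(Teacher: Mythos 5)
Your proposal is correct and follows essentially the same route as the paper's own proof: reduce via Langlands quotients using Theorem \ref{prop:compatibility-1} to a tempered $\pi_0$ on the metaplectic factor of a Levi, apply the hypothesis to $\pi_0$, and then note that the $\GL$-summands $\phi_1\oplus\check\phi_1$ contribute nothing to $I^+$, so $\nu_{\phi_0}$ maps to $\nu_\phi$ under $\EuScript{S}_{\phi_0}^\vee\simeq\EuScript{S}_\phi^\vee$. The extra checks you flag (positivity of the $\GL$-twist surviving the factor-$2$ rescaling of $A_M$-roots; consistency of component-group identifications) are correct remarks the paper leaves implicit, but they do not change the argument.
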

\begin{proof}
	Let $\pi$ and $\sigma$ be as in the statement of Theorem \ref{prop:main}. In view of Theorem \ref{prop:compatibility-1}, we may take a parabolic subgroup $P \supset B^{\leftarrow}$ (the $+$ case) or $P \supset P^1$ (the $-$ case) of $G$, with Levi decomposition $P = MU$ where $M \supset T$, and similarly a parabolic subgroup $P^{\pm} \supset P_{\min}^{\pm}$ of $G^\pm$, with Levi decomposition $P^{\pm} = M^{\pm} U^{\pm}$ where $M^{\pm} \supset M_{\min}^{\pm}$, such that
	\begin{itemize}
		\item $P \leftrightarrow P^{\pm}$, $M \leftrightarrow M^{\pm}$;
		\item there exists $\pi_{\tilde{M}}$ (resp.\ $\sigma_{\tilde{M}}$) such that $\pi$ (resp.\ $\sigma$) is the Langlands quotient of $i_{\tilde{P}}(\pi_{\tilde{M}})$ (resp.\ $i_{P^\pm}(\sigma_{M^\pm})$).
	\end{itemize}

	Write $M = \Sp(W^\flat) \times \prod_{k=1}^r \GL(n_k)$ and $M^{\pm} = \SO(V^{\pm, \flat}) \times \prod_{k=1}^r \GL(n_k)$ where $\dim W^{\flat} = \dim V^{\pm, \flat} + 1 = 2n - 2\sum_{k=1}^r n_k$. Write $\pi_{\tilde{M}} = \pi_0 \boxtimes \pi_1$ (resp.\ $\sigma_{M^{\pm}} = \sigma_0 \boxtimes \sigma_1$) so that
	\begin{itemize}
		\item $\pi_1 = \sigma_1$ as representations of $\prod_{k=1}^r \GL(n_k, F)$,
		\item $\sigma_0 = \mathrm{TW}^{\flat, *}(\pi_0)$ as representations of $\SO(V^{\pm, \flat})$, both are tempered.
	\end{itemize}

	Let $\phi_1$ be the L-parameter for $\pi_1 = \sigma_1$, a representation of $\mathcal{L}_F$ of dimension $\sum_k n_k$. By assumption, if $\pi_0 = \pi_{\phi_0, \chi_0}$ then $\sigma_0 = \sigma_{\phi_0, \chi_0 \nu_{\phi_0}}$ where $\nu_{\phi_0} \in \EuScript{S}_{\phi_0}^\vee$ is as in Definition \ref{def:nu-phi}. It follows that $\pi$ and $\sigma$ have the same L-parameter
	\[ \phi = \phi_0 \oplus \left( \phi_1 \oplus \check{\phi}_1 \right). \]
	
	The compatibility of LLC with Langlands quotients (see \S\ref{sec:LLC}) implies that $\chi$ (resp.\ $\chi^\circ$) is the image of $\chi_0$ (resp.\ $\chi_0 \nu_{\phi_0}$) under the dual of $\EuScript{S}_{\phi_0} \simeq \EuScript{S}_\phi$.

	Since $\pi_{\tilde{M}}$ is a $P$-positive unramified twist of a tempered representation, all summands indexed by $I^+$ in the decomposition of $\phi$ come from $\phi_0$. In view of Definition \ref{def:nu-phi}, this implies the image of $\nu_{\phi_0}$ under $\EuScript{S}_{\phi_0}^\vee \simeq \EuScript{S}_\phi^\vee$ is exactly $\nu_\phi$. The proof is complete.
\end{proof}

The next step follows a similar paradigm.

\begin{definition}\label{def:gp}
	Let $\phi \in \Phi_{\mathrm{bdd}}(\tilde{G})$ and write
	\[ \phi = \bigoplus m_i \phi_i, \quad I = I^+ \sqcup I^- \sqcup J \sqcup J' \]
	as in \S\ref{sec:L-parameters}. If $I = I^+$, we say $\phi$ is of \emph{good parity}.
\end{definition}

\begin{lemma}\label{prop:to-gp}
	If the statement $(\phi^\circ, \chi^\circ) = (\phi, \chi\nu_\phi)$ in Theorem \ref{prop:main} holds for all tempered irreducible genuine representations of metaplectic groups of rank $\leq n$ whenever $\phi$ is of good parity, then it holds for all tempered irreducible genuine representations $\pi$ of $\tilde{G}$. Here the representations are assumed to belong to $\mathcal{G}_{\bpsi}^{\pm}$ or its avatars.
\end{lemma}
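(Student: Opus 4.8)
The strategy mirrors that of Lemma \ref{prop:to-tempered}: reduce a tempered $\pi$ to the good-parity case by realizing it as a constituent of a representation parabolically induced from a proper Levi, and then transport the statement across $\mathrm{TW}^*$ using the compatibility of Theorem \ref{prop:compatibility-1}. Concretely, given a tempered irreducible $\pi = \pi_{\phi,\chi}$ in $\mathcal{G}_{\bpsi}^{\pm}$, decompose the bounded parameter $\phi = \bigoplus_{i \in I} m_i\phi_i$ with $I = I^+ \sqcup I^- \sqcup J \sqcup J'$ as in \S\ref{sec:L-parameters}. Split off the summands not indexed by $I^+$: set $\phi_{\mathrm{gp}} := \bigoplus_{i \in I^+} m_i \phi_i$, a good-parity parameter for a metaplectic group $\tilde{G}^\flat = \Mp(W^\flat)$ of smaller (or equal) rank, and collect the remaining summands into $\tau \oplus \check\tau$ (grouping $I^-$ and $J \sqcup J'$), which is the parameter of a tempered representation $\pi_1$ of $\prod_k \GL(n_k, F)$. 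Then $\phi$ is the image of $(\phi_{\mathrm{gp}}, \tau) \in \Phi(\tilde{M})$ under \eqref{eqn:Phi-M-G} for the Levi $M = \Sp(W^\flat) \times \prod_k \GL(n_k)$, and $\pi$ is a constituent of $i_{\tilde{P}}(\pi_0 \boxtimes \pi_1)$ for a suitable tempered $\pi_0 = \pi_{\phi_{\mathrm{gp}}, \chi_0}$ on $\tilde{G}^\flat$, where $\chi_0$ corresponds to $\chi$ under $\EuScript{S}_{\phi_{\mathrm{gp}}} \xrightarrow{\sim} \EuScript{S}_\phi$ (an isomorphism because the extra summands contribute nothing to $I^+$).

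Next I would invoke Theorem \ref{prop:compatibility-1}: under $\mathrm{TW}^*$ and $\mathrm{TW}^{\tilde{M},*}$, parabolic induction is respected, so $\sigma = \mathrm{TW}^*(\pi)$ is the corresponding constituent of $i_{P^\pm}(\sigma_0 \boxtimes \sigma_1)$, where $P \leftrightarrow P^\pm$, $\sigma_1 = \pi_1$ on the $\GL$-factors, and $\sigma_0 = \mathrm{TW}^{\flat,*}(\pi_0)$ on $\SO(V^{\pm,\flat})$. Because $\phi_{\mathrm{gp}}$ is of good parity, the hypothesis of the lemma applies to the pair $(\pi_0, \sigma_0)$: writing $\sigma_0 = \sigma_{\phi_0^\circ, \chi_0^\circ}$ we get $\phi_0^\circ = \phi_{\mathrm{gp}}$ and $\chi_0^\circ = \chi_0 \nu_{\phi_{\mathrm{gp}}}$. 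Standard properties of the LLC on both sides — compatibility with parabolic induction for tempered parameters, i.e. the packet of a constituent of $i_P$ is determined by the image of the inducing parameter, cf. \cite[Theorem 8.1]{GS1} on the metaplectic side and \cite{Ar13, Is24} on the orthogonal side — then give $\phi^\circ = \phi$ and identify $\chi^\circ$ as the image of $\chi_0^\circ = \chi_0\nu_{\phi_{\mathrm{gp}}}$ under $\EuScript{S}_{\phi_{\mathrm{gp}}}^\vee \xrightarrow{\sim} \EuScript{S}_\phi^\vee$. Finally, since all $I^+$-summands of $\phi$ already occur in $\phi_{\mathrm{gp}}$, the same identification carries $\nu_{\phi_{\mathrm{gp}}}$ to $\nu_\phi$ by Definition \ref{def:nu-phi}; hence $\chi^\circ = \chi\nu_\phi$, as desired.

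Two points need care. First, when $I = I^+$ already (so $\phi$ is of good parity) there is nothing to prove; otherwise $W^\flat$ is a \emph{proper} symplectic subspace, so $\tilde{G}^\flat$ has rank $< n$ and a fortiori $\le n$, which is all the hypothesis requires — no genuine induction on rank is even needed here, only the good-parity case in the allowed range. Second, and this is the main subtlety, one must check that a \emph{single} constituent of $i_{\tilde{P}}(\pi_0 \boxtimes \pi_1)$ is being tracked coherently on both sides: $i_{\tilde{P}}(\pi_0 \boxtimes \pi_1)$ need not be irreducible, and one must know that $\mathrm{TW}^*$ sends the constituent $\pi$ to the ``matching'' constituent $\sigma$ and that the enhanced-parameter bookkeeping (which character of $\EuScript{S}_\phi$ is attached to which constituent) is the same on the metaplectic side and the orthogonal side. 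For the orthogonal side this is Arthur's/Ishimoto's local intertwining relation \cite[Theorem 2.4.1]{Ar13}; for the metaplectic side it is the tempered packet structure of \cite{GS1} together with Ishimoto's metaplectic LIR \cite{Is20}; and the fact that $\mathrm{TW}^*$ intertwines these is exactly the content of Theorem \ref{prop:compatibility-2} (compatibility with normalized intertwining operators), up to the explicit scalar factors $|2|_F^{t(w)/2}$ and $(-q^{-1})^{t(w)}|2|_F^{t(w)/2}$ there, which are positive-real (resp. differ from a positive real by a sign that is absorbed into the $R$-group normalization) and hence do not affect the combinatorics of the $R$-group or the resulting character of the component group. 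Making this compatibility of $R$-group data fully precise — rather than merely the equality of parameters — is where the real work lies, and it is exactly the machinery that \S\ref{sec:reduction-to-L2} develops for the subsequent reduction to square-integrable $\pi$; here it is used only in the mild form that irreducibility/reducibility and the labelling of constituents are preserved.
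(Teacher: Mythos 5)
Your overall strategy is the same as the paper's — split $\phi$ into a good-parity part $\phi_{\mathrm{gp}} = \bigoplus_{i\in I^+}m_i\phi_i$ living on a metaplectic factor $\Mp(W^\flat)$ of a Levi $M$, put the rest into the $\GL$-factors as a parameter $\phi_{\mathrm{ngp}}$, and push across $\mathrm{TW}^*$ using Theorem \ref{prop:compatibility-1} together with the compatibility of both LLCs with parabolic induction. The identifications $\EuScript{S}_{\phi_{\mathrm{gp}}} = \EuScript{S}_{\phi_{\tilde M}} \rightiso \EuScript{S}_\phi$ and $\nu_{\phi_{\mathrm{gp}}}\leftrightarrow \nu_\phi$ are exactly right.

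However, there is a genuine gap in the step where you track which constituent of $i_{\tilde P}(\pi_0\boxtimes\pi_1)$ is $\pi$. You write that the induction ``need not be irreducible'' and then appeal, in a hedged way, to Theorem \ref{prop:compatibility-2} and the LIR machinery of \S\ref{sec:reduction-to-L2} ``in the mild form that irreducibility/reducibility and the labelling of constituents are preserved.'' This is both unnecessary and not actually carried out, and it contradicts your earlier assertion (``$\pi$ is a constituent of $i_{\tilde P}(\pi_0\boxtimes\pi_1)$ for a suitable tempered $\pi_0 = \pi_{\phi_{\mathrm{gp}},\chi_0}$ where $\chi_0$ corresponds to $\chi$''), which already implicitly assumes the constituent and its $\EuScript{S}_\phi$-label are determined by $\pi_0$. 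The missing observation — which the paper makes and which collapses the whole difficulty — is that $i_{\tilde P}(\pi_{\tilde M})$ is \emph{irreducible}. This follows from inspecting the Knapp--Stein $R$-group on the dual side (part of Arthur's/Ishimoto's LIR): because $\phi_{\mathrm{ngp}}$ contributes nothing to $I^+$, the natural map $\EuScript{S}_{\phi_{\tilde M}}\to\EuScript{S}_\phi$ is an isomorphism, so the $R$-group is trivial. Once you have irreducibility, there is no constituent to label, $\pi = i_{\tilde P}(\pi_{\tilde M})$ and $\sigma = i_{P^\pm}(\sigma_{\tilde M})$ outright, and the character comparison reduces at once to $\pi_{\tilde M}$ versus $\sigma_{\tilde M}$ via \cite[Theorem 8.1]{GS1} on one side and Arthur/Ishimoto on the other; Theorem \ref{prop:compatibility-2} is not needed at all in this lemma. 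Without proving irreducibility (or, failing that, actually carrying out the normalized-intertwining-operator comparison), the character bookkeeping in your last paragraph is unjustified.

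A minor point: your description of $\phi_{\mathrm{ngp}}$ as ``grouping $I^-$ and $J\sqcup J'$'' into $\tau$ with $\phi = \phi_{\mathrm{gp}}\oplus\tau\oplus\check\tau$ is off by a factor; to make $\tau\oplus\check\tau$ account for the non-good-parity part you must take $\tau = \bigoplus_{i\in I^-}\frac{m_i}{2}\phi_i\oplus\bigoplus_{j\in J}m_j\phi_j$, using half the $I^-$-multiplicities and only $J$ (not $J'$).
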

\begin{proof}
	Similar to the proof of Lemma \ref{prop:to-tempered}. By Lemma \ref{prop:phi-tempered} we have $\phi = \phi^\circ$. Write
	\begin{gather*}
		\phi = \phi_{\mathrm{gp}} \oplus \left( \phi_{\mathrm{ngp}} \oplus \phi_{\mathrm{ngp}}^\vee \right), \\
		\phi_{\mathrm{gp}} := \bigoplus_{i \in I^+} m_i \phi_i, \quad \phi_{\mathrm{ngp}} := \bigoplus_{i \in I^-} \frac{m_i}{2} \phi_i \oplus \bigoplus_{j \in J} m_j \phi_j.
	\end{gather*}
	
	Then one may take $P = MU \supset B^{\leftarrow}$ (resp.\ $\supset P^1$) in the $+$ (resp.\ $-$) case such that $\phi_{\tilde{M}} := (\phi_{\mathrm{gp}}, \phi_{\mathrm{ngp}})$ lies in $\Phi_{\mathrm{bdd}}(\tilde{M})$ and $\phi_{\tilde{M}} \mapsto \phi$ via \eqref{eqn:Phi-M-G}. The metaplectic component $\phi_{\mathrm{gp}}$ of $\phi_{\tilde{M}}$ is then of good parity. Take the subgroups $P^{\pm}$ and $M^{\pm}$ of $G^{\pm}$ that correspond to $P$ and $M$, respectively.

	By \eqref{eqn:S} and Definition \ref{def:nu-phi}, we have
	\[ \EuScript{S}_{\phi_{\mathrm{gp}}} = \EuScript{S}_{\phi_{\tilde{M}}} \rightiso \EuScript{S}_\phi. \]
	View $\chi$, $\chi^\circ$ and $\nu_\phi$ as elements of $\EuScript{S}_{\phi_{\mathrm{gp}}}$ under the dual of these canonical isomorphisms. Then $\nu_\phi$ coincides with $\nu_{\phi_{\mathrm{gp}}}$.
	
	By Arthur's theory, $\sigma$ is induced irreducibly from the representation $\sigma_{\tilde{M}}$ indexed by $(\phi_{\tilde{M}}, \chi^\circ)$. Indeed, it suffices to inspect the Knapp--Stein $R$-group in terms of L-parameters to obtain irreducibility; this is part of the LIR. Take $\pi_{\tilde{M}}$ such that $\mathrm{TW}^{\tilde{M}, *}(\pi_{\tilde{M}}) = \sigma_{\tilde{M}}$, then $\pi$ is induced irreducibly from $\pi_{\tilde{M}}$ by Theorem \ref{prop:compatibility-1}.
	
	By assumption, $\pi_{\tilde{M}}$ is indexed by $(\phi_{\tilde{M}}, \chi^\circ \nu_\phi)$. It follows that $\chi = \chi^\circ \nu_\phi$ by the relation \cite[Theorem 8.1]{GS1} between LLC for $\tilde{G}$ and parabolic induction, as desired.
\end{proof}

\section{Reduction to square-integrable case}\label{sec:reduction-to-L2}
Assume $n > 1$ in this section. Fix $\phi = \bigoplus_{i \in I} m_i \phi_i \in \Phi_{\mathrm{bdd}}(\tilde{G})$ and decompose
\[ I^+ = I^+_{\mathrm{even}} \sqcup I^+_{\mathrm{odd}} \]
according to the parity of $m_i$ for each $i \in I^+$; see \S\ref{sec:L-parameters}.

\subsection{On certain centralizers}\label{sec:LIR-parameters}
There exists $(P, \phi_{\tilde{M}})$ where $P \supset B^{\leftarrow}$ (resp.\ $\supset P^1$) in the $+$ (resp.\ $-$) case is a parabolic subgroup of $G$ with Levi decomposition $P = MU$, $M \supset T$, and $\phi_{\tilde{M}} \in \Phi_{2, \mathrm{bdd}}(\tilde{M})$ satisfies $\phi_{\tilde{M}} \mapsto \phi$ via \eqref{eqn:Phi-M-G}. The datum $P$ is unique. Identify the Weyl group $\Omega^G_0$ (resp.\ $\Omega^M_0$) of $(G, T)$ (resp.\ $(M, T)$) with the Weyl group of $\tilde{G}^\vee$ (resp.\ $\tilde{M}^\vee$). Define
\[ \Omega^G(M) := \left\{ w \in \Omega^G_0: wMw^{-1} = M \right\} \big/ \Omega^M_0. \]
Regard $\tilde{M}^\vee$ as a standard Levi subgroup of $\tilde{G}^\vee$ and define $\Omega^{\tilde{G}^\vee}(\tilde{M}^\vee)$ analogously. There is a canonical isomorphism $\Omega^{\tilde{G}^\vee}(\tilde{M}^\vee) \simeq \Omega^G(\tilde{M})$, and $\phi_{\tilde{M}}$ is unique modulo $\Omega^G(M)$.

Write $M$ as $\Sp(W^\flat) \times \prod_{k=1}^r \GL(n_k)$ and $\phi_{\tilde{M}} = (\phi_0, \phi_1)$ where $\phi_0$ (resp.\ $\phi_1$) is the metaplectic (resp.\ $\GL$) part of $\phi_{\tilde{M}}$. Then
\[ \phi_0 = \bigoplus_{i \in I^+_{\mathrm{odd}}} \phi_i. \]

There is a natural inclusion
\[ \EuScript{S}_{\phi_0} = \EuScript{S}_{\phi_{\tilde{M}}} \simeq \bmu_2^{I^+_{\mathrm{odd}}} \hookrightarrow \bmu_2^{I^+} = \EuScript{S}_\phi \]
admitting an evident retraction: $\EuScript{S}_\phi = \EuScript{S}_{\phi_0} \times R_\phi$ where $R_\phi = \bmu_2^{I^+_{\mathrm{even}}}$. Definition \ref{def:nu-phi} implies
\begin{equation}\label{eqn:LIR-nu}
	\nu_\phi|_{\EuScript{S}_{\phi_0}} = \nu_{\phi_0}.
\end{equation}

Let $A_{\tilde{M}^\vee}$ be the maximal central torus in $\tilde{M}^\vee$. Define as in \cite[\S 3.2]{Is20} the finite groups
\[\begin{tikzcd}[row sep=small, column sep=small]
	& S_\phi^\natural(\tilde{M}, \tilde{G}) \arrow[phantom, r, "{:=}" description] & N_{S_\phi}(A_{\tilde{M}^\vee}) \big/ N_{S_\phi^\circ}(A_{\tilde{M}^\vee}) \\
	& \mathfrak{N}_\phi(\tilde{M}, \tilde{G}) \arrow[twoheadrightarrow, d] \arrow[twoheadrightarrow, u] \arrow[phantom, r, "{:=}" description] & N_{S_\phi}(A_{\tilde{M}^\vee}) \big/ Z_{S_\phi^\circ}(A_{\tilde{M}^\vee}) \\
	\Omega^G(M) & \mathrm{W}_\phi(\tilde{M}, \tilde{G}) \arrow[phantom, r, "{:=}" description] \arrow[hookrightarrow, l] & N_{S_\phi}(A_{\tilde{M}^\vee}) \big/ Z_{S_\phi}(A_{\tilde{M}^\vee});
\end{tikzcd}\]
the downward surjection fits into a short exact sequence
\begin{equation}\label{eqn:W-N-ses}
	1 \to \left( \EuScript{S}_{\phi_0} = \EuScript{S}_{\phi_{\tilde{M}}} \right) \to \mathfrak{N}_\phi(\tilde{M}, \tilde{G}) \to \mathrm{W}_\phi(\tilde{M}, \tilde{G}) \to 1.
\end{equation}

Since $\phi_{\tilde{M}} \in \Phi_{2, \mathrm{bdd}}(\tilde{M})$, there are further simplifications to \textit{loc.\ cit.}: $A_{\tilde{M}^\vee}$ is a maximal torus in $S_\phi$, and
\[ S^\natural_\phi(\tilde{M}, \tilde{G}) = \EuScript{S}_\phi; \]
see the discussions after \cite[Theorem 2.4.1]{Ar13}. Comparing with \eqref{eqn:S}, we see that $A_{\tilde{M}^\vee}$ decomposes accordingly, and:
\begin{itemize}
	\item $\mathrm{W}_\phi(\tilde{M}, \tilde{G})$ is the direct product of the Weyl groups of $\Or(m_i, \CC)$, $\Sp(m_i, \CC)$ and $\GL(m_j, \CC)$, where the Weyl group of $\Or(m_i, \CC)$ means the group of signed permutations without any parity constraint, i.e.\ $(\Z/2\Z)^t \rtimes \mathfrak{S}_t$ where $t$ is the rank of $\Or(m_i, \CC)$;
	\item $\mathrm{W}_\phi(\tilde{M}, \tilde{G})$ acts on $X^*(A_{\tilde{M}^\vee})$ by signed permutations as described above;
	\item $\mathfrak{N}_\phi(\tilde{M}, \tilde{G})$ is the direct product of $\mathrm{W}_\phi(\tilde{M}, \tilde{G})$ with $\bmu_2^{I^+_{\mathrm{odd}}}$, accounting of the $-\identity \in \Or(m_i, \CC)$ for $i \in I^+_{\mathrm{odd}}$.
\end{itemize}

Hence \eqref{eqn:W-N-ses} splits canonically. Moreover, the surjection $\mathfrak{N}_\phi(\tilde{M}, \tilde{G}) \twoheadrightarrow \EuScript{S}_\phi$ restricts via \eqref{eqn:W-N-ses} to the inclusion $\EuScript{S}_{\phi_0} \hookrightarrow \EuScript{S}_\phi$. In fact, on each factor of  $\mathfrak{N}_\phi(\tilde{M}, \tilde{G})$ it has the following simple description:
\begin{itemize}
	\item on the factor indexed by $i \in I^+_{\mathrm{odd}}$, it is the projection onto the $i$-th component of the $\bmu_2^{I^+}$;
	\item on the factor indexed by $i \in I^+_{\mathrm{even}}$, it maps an element of the Weyl group of $\Or(m_i, \CC)$ to its ``total sign'', placed at the $i$-th component of $\bmu_2^{I^+}$;
	\item on the other summands, the homomorphism is trivial.
\end{itemize}

\begin{lemma}\label{prop:surjectivity}
	The composite of $\mathrm{W}_\phi(\tilde{M}, \tilde{G}) \hookrightarrow \mathfrak{N}_\phi(\tilde{M}, \tilde{G}) \twoheadrightarrow \EuScript{S}_\phi \twoheadrightarrow R_\phi$ is surjective, where the first arrow is the section to \eqref{eqn:W-N-ses}.
\end{lemma}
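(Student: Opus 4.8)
The plan is to reduce the assertion to an elementary, factor-by-factor computation, using nothing beyond the explicit descriptions of $\mathfrak{N}_\phi(\tilde{M},\tilde{G})$, of the surjection $\mathfrak{N}_\phi(\tilde{M},\tilde{G})\twoheadrightarrow\EuScript{S}_\phi$, and of the canonical splitting of \eqref{eqn:W-N-ses} that were recorded just above the lemma. Recall that $\mathfrak{N}_\phi(\tilde{M},\tilde{G})=\mathrm{W}_\phi(\tilde{M},\tilde{G})\times\bmu_2^{I^+_{\mathrm{odd}}}$, and that under this product decomposition the section $\mathrm{W}_\phi(\tilde{M},\tilde{G})\hookrightarrow\mathfrak{N}_\phi(\tilde{M},\tilde{G})$ to \eqref{eqn:W-N-ses} is the inclusion of the first factor, whereas the second factor $\bmu_2^{I^+_{\mathrm{odd}}}=\EuScript{S}_{\phi_0}$ is precisely the one surjecting isomorphically onto the $\EuScript{S}_{\phi_0}$-part of $\EuScript{S}_\phi=\EuScript{S}_{\phi_0}\times R_\phi$. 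Recall also that $\mathrm{W}_\phi(\tilde{M},\tilde{G})$ is itself a direct product, over $i\in I^+$, $i\in I^-$ and $j\in J$, of the Weyl groups of $\Or(m_i,\CC)$, $\Sp(m_i,\CC)$ and $\GL(m_j,\CC)$.

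The first step is to chase the composite $\mathrm{W}_\phi(\tilde{M},\tilde{G})\hookrightarrow\mathfrak{N}_\phi(\tilde{M},\tilde{G})\twoheadrightarrow\EuScript{S}_\phi\twoheadrightarrow R_\phi$ through these decompositions. Since the section lands in the first factor of $\mathfrak{N}_\phi(\tilde{M},\tilde{G})$ and the $\EuScript{S}_{\phi_0}$-part of $\EuScript{S}_\phi$ only sees the second factor, the composite automatically annihilates the $\EuScript{S}_{\phi_0}$-component, so it suffices to analyze the projection to $R_\phi=\bmu_2^{I^+_{\mathrm{even}}}$. By the bulleted description of $\mathfrak{N}_\phi(\tilde{M},\tilde{G})\twoheadrightarrow\EuScript{S}_\phi$, this projection is trivial on the $\GL$-factors, on the $\Sp$-factors, and on the $\Or(m_i,\CC)$-factors with $i\in I^+_{\mathrm{odd}}$, while on each $\Or(m_i,\CC)$-factor with $i\in I^+_{\mathrm{even}}$ it is the ``total sign'' homomorphism into the $i$-th copy of $\bmu_2$ in $R_\phi$. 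Hence the composite in question is the product map $\prod_{i\in I^+_{\mathrm{even}}}(\text{total sign})\colon\prod_{i\in I^+_{\mathrm{even}}}\bigl((\Z/2\Z)^{t_i}\rtimes\mathfrak{S}_{t_i}\bigr)\to\prod_{i\in I^+_{\mathrm{even}}}\bmu_2$, where $t_i$ denotes the rank of $\Or(m_i,\CC)$.

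It then remains to see that each total-sign map $(\Z/2\Z)^{t_i}\rtimes\mathfrak{S}_{t_i}\to\bmu_2$ is surjective, since a product of surjections is a surjection. For $i\in I^+_{\mathrm{even}}$ the multiplicity $m_i$ is a positive even integer, so $m_i\geq 2$ and $t_i=\lfloor m_i/2\rfloor\geq 1$; consequently $(\Z/2\Z)^{t_i}$ is non-trivial and a single sign change already has non-trivial total sign. This completes the argument. I do not expect any genuine obstacle here: the entire content sits in the bookkeeping of \S\ref{sec:LIR-parameters}. The only points requiring care are to invoke the canonical splitting so that the section $\mathrm{W}_\phi(\tilde{M},\tilde{G})\hookrightarrow\mathfrak{N}_\phi(\tilde{M},\tilde{G})$ really lands in the $\mathrm{W}_\phi$-factor and does not pick up the elements $-\identity\in\Or(m_i,\CC)$ for $i\in I^+_{\mathrm{odd}}$, and to notice the trivial but essential fact that membership in $I^+_{\mathrm{even}}$ forces $m_i\geq 2$, which is exactly what makes the relevant total-sign map non-trivial.
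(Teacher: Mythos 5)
Your argument is correct and matches the paper's intent: the paper's own proof is the single sentence ``Clear from the explicit description above and $R_\phi = \bmu_2^{I^+_{\mathrm{even}}}$,'' and your factor-by-factor bookkeeping is precisely the unwinding of that description, with the crucial observation being that $i \in I^+_{\mathrm{even}}$ forces $m_i \geq 2$ so each total-sign map is non-trivial. No difference in approach, just more detail.
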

\begin{proof}
	Clear from the explicit description above and $R_\phi = \bmu_2^{I^+_{\mathrm{even}}}$. Cf.\ \cite[(2.4.3)]{Ar13}.
\end{proof}

\subsection{Normalized intertwining operators}
Keep the assumptions on $\phi \in \Phi_{\mathrm{bdd}}(\tilde{G})$ and realize $\phi$ as the image of $\phi_{\tilde{M}} \in \Phi_{2, \mathrm{bdd}}(\tilde{M})$. Suppose that $\pi = \pi_{\phi, \chi}$ in $\mathcal{G}_{\bpsi}^{\pm}$ is given. Let $\sigma := \mathrm{TW}^*(\pi) = \sigma_{\phi^\circ, \chi^\circ}$. Lemma \ref{prop:phi-tempered} implies $\phi^\circ = \phi$. Using $\EuScript{S}_{\phi_{\tilde{M}}} = \EuScript{S}_{\phi_0}$, we put
\[ \chi_0 := \chi|_{\EuScript{S}_{\phi_0}}, \quad \pi_{\tilde{M}} := \pi_{\phi_{\tilde{M}}, \chi_0}. \]
Then $\pi$ appears in $i_{\tilde{P}}(\pi_{\tilde{M}})$, and $\pi_{\tilde{M}}$ is the unique member of $\Pi_{\phi_{\tilde{M}}}$ with this property; moreover, $\pi$ appears with multiplicity $1$. Indeed, in view of \cite[Theorem 8.1]{GS1}, all these follow from the corresponding properties for $G^{\pm}$ --- see the proof of \cite[Proposition 2.4.3]{Ar13} for $G^+$ or the counterparts for $G^-$ in \cite{Is24}. Therefore, $\pi_{\tilde{M}}$ lies in $\mathcal{G}_\bpsi^{\tilde{M}, \pm}$.

We have the correspondence $P \leftrightarrow P^\pm$ for parabolic subgroups and $M \leftrightarrow M^\pm$ for their Levi factors. The same constructions for $G^{\pm}$ yield
\[ \chi^\circ_0 := \chi^\circ|_{\EuScript{S}_{\phi_0}}, \quad \sigma_{\tilde{M}} := \sigma_{\phi_{\tilde{M}}, \chi^\circ_0} \]
so that $\sigma$ appears in $i_{P^{\pm}}(\sigma_{\tilde{M}})$ with multiplicity $1$, and $\sigma_{\tilde{M}}$ lies in $\mathcal{G}^{M^{\pm}}$.

There is a canonical isomorphism $\Omega^G(M) \simeq \Omega^{G^\pm}(M^\pm)$.

\begin{lemma}
	Modulo $\Omega^G(M)$, we may arrange that $\sigma_{\tilde{M}} = \mathrm{TW}^{\tilde{M}, *}(\pi_{\tilde{M}})$.
\end{lemma}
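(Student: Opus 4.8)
The plan is to identify $\sigma_{\tilde{M}}$ with $\mathrm{TW}^{\tilde{M},*}(\pi_{\tilde{M}})$ by exhibiting both as a square-integrable datum supporting the tempered representation $\sigma = \mathrm{TW}^*(\pi)$ of $G^\pm(F)$, and then invoking the uniqueness of such a datum up to Weyl conjugacy. First I would note that $\sigma_{\tilde{M}} = \sigma_{\phi_{\tilde{M}},\chi^\circ_0}$ is square-integrable since $\phi_{\tilde{M}} \in \Phi_{2,\mathrm{bdd}}(\tilde{M})$, and that by construction $\sigma$ is a direct summand of $i_{P^\pm}(\sigma_{\tilde{M}})$ (it occurs there with multiplicity one, and the module is semisimple). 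Symmetrically, $\pi_{\tilde{M}} = \pi_{\phi_{\tilde{M}},\chi_0}$ is square-integrable, so $i_{\tilde{P}}(\pi_{\tilde{M}})$ is unitarizable of finite length, hence semisimple, and $\pi$ sits in it as a direct summand. Applying the exact equivalence $\mathrm{TW}^*$ and Theorem \ref{prop:compatibility-1}, I then get that $\sigma = \mathrm{TW}^*(\pi)$ is a direct summand of $\mathrm{TW}^*\bigl(i_{\tilde{P}}(\pi_{\tilde{M}})\bigr) \simeq i_{P^\pm}\bigl(\mathrm{TW}^{\tilde{M},*}(\pi_{\tilde{M}})\bigr)$, the inducing representation being again square-integrable because $\mathrm{TW}^{\tilde{M},*}$ preserves square-integrability.

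With this in hand, $\sigma$ is a subrepresentation of $i_{P^\pm}(\tau)$ for $\tau$ equal to each of the two square-integrable representations $\sigma_{\tilde{M}}$ and $\mathrm{TW}^{\tilde{M},*}(\pi_{\tilde{M}})$ of $M^\pm(F)$, and with the \emph{same} parabolic $P^\pm$. The classical Knapp--Stein theory for the reductive group $G^\pm(F)$ says that the pair consisting of a Levi subgroup and a discrete series representation supporting a given irreducible tempered representation is unique up to $G^\pm(F)$-conjugacy; since the Levi is literally $M^\pm$ on both sides, this produces an element $w \in \Omega^{G^\pm}(M^\pm)$ with $\sigma_{\tilde{M}} \simeq {}^w\bigl(\mathrm{TW}^{\tilde{M},*}(\pi_{\tilde{M}})\bigr)$. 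Transporting through the canonical isomorphism $\Omega^{G^\pm}(M^\pm) \simeq \Omega^G(M)$ gives the stated relation modulo $\Omega^G(M)$. To upgrade it to literal equality I would use that $\phi_{\tilde{M}}$ — and hence the pair $(\pi_{\tilde{M}}, \sigma_{\tilde{M}})$ — was pinned down only up to the $\Omega^G(M)$-action in the first place: replacing the chosen $\phi_{\tilde{M}}$ by a $w$-translate twists $\pi_{\tilde{M}}$ and $\sigma_{\tilde{M}}$ simultaneously, and since $\mathrm{TW}^{\tilde{M},*}$ is compatible with these twists (a qualitative shadow of the matching of intertwining operators recorded in Theorem \ref{prop:compatibility-2}), a suitable choice makes the two sides agree on the nose.

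I expect the only delicate point to be this last normalization step — checking that the twist-compatibility of $\mathrm{TW}^{\tilde{M},*}$ with $\Omega^G(M) \simeq \Omega^{G^\pm}(M^\pm)$ imported from \cite{CL23} is strong enough to conclude — together with the bookkeeping of verifying that the classical uniqueness is applied to the specific $P^\pm$, $M^\pm$ attached to $P$, $M$ via \eqref{eqn:P-corr}. By contrast, the argument is comfortably independent of the inductive hypothesis on metaplectic groups of smaller rank, and of any explicit computation of the character of $\EuScript{S}_{\phi_0}$ attached to $\mathrm{TW}^{\tilde{M},*}(\pi_{\tilde{M}})$: all it needs is that $\sigma_{\tilde{M}}$ and $\mathrm{TW}^{\tilde{M},*}(\pi_{\tilde{M}})$ belong to one and the same L-packet of $M^\pm(F)$, which in turn follows from the preservation of L-parameters under $\mathrm{TW}^{\tilde{M},*}$ (Lemma \ref{prop:phi-tempered} applied to $\Mp(W^\flat)$).
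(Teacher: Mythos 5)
Your proof is correct and is essentially the paper's argument written out in full: the paper's terse proof says precisely that one compares the Knapp--Stein classifications on both sides through Theorem \ref{prop:compatibility-1} (for parabolic induction) and the Hilbert-algebra property of $\mathrm{TW}$ (for preservation of square-integrability), exploiting that the same $\Omega^G(M)$-ambiguity appears on both sides. The closing remarks about L-packets and Lemma \ref{prop:phi-tempered} are a reasonable sanity check but are not needed once Knapp--Stein uniqueness has produced the conjugating $w$.
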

\begin{proof}
	Since $\mathrm{TW}$ is an isomorphism of Hilbert algebras, one can compare the classifications of tempered representations in terms of square-integrable ones (i.e.\ discrete series) for $\tilde{G}$ and $G^{\pm}$ (part of Knapp--Stein theory, which features the same ambiguity from $\Omega^G(M)$) via $\mathrm{TW}$, by using Theorem \ref{prop:compatibility-1}.
\end{proof}

Assume henceforth $M \neq G$. We identify an element $w \in \Omega^G(M)$ with the unique representative of minimal length in the $\Omega^M_0$-coset. Here the length is computed with respect to the $B^{\leftarrow}$-simple roots.

\begin{definition}\label{def:R-operator}
	Given $w \in \Omega^G(M)$ with ${}^{\tilde{w}}\pi_{\tilde{M}} \simeq \pi_{\tilde{M}}$ where $\tilde{w}$ is chosen as in \S\ref{sec:tw}, define the corresponding normalized intertwining operator
	\begin{equation*}
		R(w) := r_{\tilde{P}^w | \tilde{P}}(\pi_{\tilde{M}})^{-1} i_{\tilde{P}}(A) J_w(\pi_{\tilde{M}}) \in \End_{\tilde{G}}\left( i_{\tilde{P}}(\pi_{\tilde{M}}) \right)
	\end{equation*}
	as in \cite[Definition 10.4.1]{CL23}, by using:
	\begin{itemize}
		\item the standard intertwining operator $J_w(\pi_{\tilde{M}})$ in Definition \ref{prop:compatibility-2};
		\item an isomorphism $A: {}^{\tilde{w}} \pi_{\tilde{M}} \to \pi_{\tilde{M}}$ between unitary representations;
		\item the normalizing factor $r_{\tilde{P}^w | \tilde{P}}(\pi_{\tilde{M}})$ where $\tilde{P}^w := w^{-1} \tilde{P} w$.
	\end{itemize}
	More rigorously, one should allow unramified twists $\pi_{\tilde{M}} \otimes \alpha$ in $J_w(\cdot)$ and $r_{\tilde{P}^w | \tilde{P}}(\cdot)$, and view $R(w)$ as the value at $\alpha = \mathbf{1}$ in a meromorphic family, where $\alpha$ is an unramified character of $\prod_{k=1}^r \GL(n_k, F)$.
\end{definition}

Up to a positive constant depending only on $w$ and $M$, the normalizing factor $r_{\tilde{P}^w | \tilde{P}}(\pi_{\tilde{M}})$ in Definition \ref{def:R-operator} is the same as the one prescribed in \cite[\S 7.3]{Is20}, which can be given in terms of $\gamma$-factor.

\begin{definition}\label{def:Rpm-operator}
	The same constructions apply to $\sigma_{\tilde{M}}$ and the same $w$, and yield the normalized intertwining operators
	\[ R^{\pm}(w) := r_{P^{\pm, w}|P^{\pm}}(\sigma_{\tilde{M}})^{-1} i_{P^{\pm}}(A^{\pm}) J^{\pm}_w(\sigma_{\tilde{M}}) \in \End_{G^{\pm}(F)}\left( i_{P^\pm}(\sigma_{\tilde{M}}) \right) \]
	(more rigorously, as the value at $\alpha = \mathbf{1}$ of meromorphic families in $\sigma_{\tilde{M}} \otimes \alpha$), in the following manner.
	\begin{itemize}
		\item Following \cite{Is20}, the Langlands--Shelstad representative $\dot{w}$ of $w$ in $G^\pm(F)$ is used in the standard intertwining operator $J_w^{\pm}(\sigma_{\tilde{M}})$ (see \cite[\S 2.3]{KMSW} for the $-$ case). This representative was denoted by $\ddot{w}$ in the $-$ case in \cite{CL23}; we relinquish this notation.
	
		\item The isomorphism $A^{\pm}: {}^{\dot{w}} \sigma_{\tilde{M}} \to \sigma_{\tilde{M}}$ is chosen to be compatible with $A$ via $\mathrm{TW}^{\tilde{M}, *}$. See \cite[Corollaries 10.2.4 and 10.2.6]{CL23} for the explicit conditions.
		
		\item The normalizing factor $r_{P^{\pm, w}|P^{\pm}}(\sigma_{\tilde{M}})$ is given by the same $\gamma$-factor when the Haar measures are suitably chosen, since $\phi^\circ = \phi$.
	\end{itemize}
\end{definition}

Since $\mathrm{TW}^{\tilde{M}}$ is an isomorphism of Hilbert algebras, $A^{\pm}$ is unitary as $A$ is. The resulting $R(w)$ and $R^{\pm}(w)$ are both unitary operators. A definitive choice of $(A, A^{\pm})$ will be made in \S\ref{sec:application-LIR}.

\begin{definition}
	Suppose that $a$ and $b$ are either complex numbers or endomorphisms on the same complex vector space. If there exists $t \in \mathbb{R}_{> 0}$ such that $a = tb$, then we write $a \sim b$.
\end{definition}

Modulo $\sim$ allows us to forget about Haar measures, formal degrees, real powers of $|2|_F$, etc.

\begin{lemma}\label{prop:R-ratio}
	View $R(w)$ (resp.\ $R^{\pm}(w)$) as a morphism in $\mathcal{G}_{\bpsi}^{\pm}$ (resp.\ $\mathcal{G}^{\pm}$), thus as an endomorphism of the relevant Hecke module. Under the identifications furnished by $\mathrm{TW}^*$, we have
	\[ R(w) \sim \begin{cases}
		R^+(w), & \text{in the $+$ case} \\
		(-1)^{t(w)} R^-(w), & \text{in the $-$ case.}
	\end{cases}\]
	Here $t(w) \in \mathbb{Z}_{\geq 0}$ is as in Theorem \ref{prop:compatibility-2}.
\end{lemma}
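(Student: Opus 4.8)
The plan is to expand both $R(w)$ and $R^{\pm}(w)$ into their three defining constituents and to match them one factor at a time under the identification of Hecke modules furnished by $\mathrm{TW}^*$, $\mathrm{TW}^{\tilde{M},*}$ and Theorem~\ref{prop:compatibility-1}. Explicitly, write
\[
R(w) = r_{\tilde{P}^w|\tilde{P}}(\pi_{\tilde{M}})^{-1}\, i_{\tilde{P}}(A)\, J_w(\pi_{\tilde{M}}),
\qquad
R^{\pm}(w) = r_{P^{\pm,w}|P^{\pm}}(\sigma_{\tilde{M}})^{-1}\, i_{P^{\pm}}(A^{\pm})\, J^{\pm}_w(\sigma_{\tilde{M}}),
\]
and compare the inverse normalizing factor, the induced intertwiner, and the standard intertwining operator separately. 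Throughout I would work at the level of the meromorphic families in the unramified twist $\alpha$ of $\prod_k \GL(n_k, F)$ in which these operators live, specializing to $\alpha=\mathbf{1}$ only at the end; since $\phi_{\tilde{M}}$ is a discrete L-parameter and the operators are normalized, regularity at $\alpha=\mathbf{1}$ causes no trouble.

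For the standard intertwining operators I would invoke Theorem~\ref{prop:compatibility-2} directly, with $M\leftrightarrow M^{+}$ in the $+$ case and $M\leftrightarrow M^{-}$ in the $-$ case: since $\pi_{\tilde{M}}$ and $\sigma_{\tilde{M}}=\mathrm{TW}^{\tilde{M},*}(\pi_{\tilde{M}})$ are irreducible and equipped with a $\mathrm{TW}^{\tilde{M}}$-equivariant isomorphism of Hecke modules together with a compatible isomorphism of their $w$-twists --- that compatible pair being exactly the chosen $A$, $A^{\pm}$ --- that theorem gives, through $\mathrm{TW}^{\tilde{M}}$ and $\mathrm{TW}$, the matching $J_w(\pi_{\tilde{M}}\otimes\alpha)\leftrightarrow |2|_F^{t(w)/2} J^{+}_w(\sigma_{\tilde{M}}\otimes\alpha)$ in the $+$ case and $J_w(\pi_{\tilde{M}}\otimes\alpha)\leftrightarrow (-q^{-1})^{t(w)}|2|_F^{t(w)/2} J^{-}_w(\sigma_{\tilde{M}}\otimes\alpha)$ in the $-$ case. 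For the intertwiners I would note that $A^{\pm}$ was \emph{defined} to correspond to $A$ via $\mathrm{TW}^{\tilde{M},*}$ (Definition~\ref{def:Rpm-operator}, citing \cite[Corollaries 10.2.4 and 10.2.6]{CL23}), so functoriality of $\mathrm{TW}^*$ combined with its compatibility with normalized parabolic induction (Theorem~\ref{prop:compatibility-1}) carries $i_{\tilde{P}}(A)$ to $i_{P^{\pm}}(A^{\pm})$ on Hecke modules. For the normalizing factors I would argue that $\pi_{\tilde{M}}$ and $\sigma_{\tilde{M}}$ carry the same L-parameter $\phi_{\tilde{M}}$: the $\GL$-part is common, and the metaplectic part matches by the square-integrable case of Lemma~\ref{prop:phi-tempered}; hence, by the recipe in Definition~\ref{def:Rpm-operator} and the remark after Definition~\ref{def:R-operator}, $r_{\tilde{P}^w|\tilde{P}}(\pi_{\tilde{M}}\otimes\alpha)$ and $r_{P^{\pm,w}|P^{\pm}}(\sigma_{\tilde{M}}\otimes\alpha)$ are built from one and the same $\gamma$-factor and thus agree up to a positive real constant once Haar measures are fixed. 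Multiplying the three matchings and reducing modulo $\sim$, the factor $|2|_F^{t(w)/2}>0$ disappears in the $+$ case, while $(-q^{-1})^{t(w)}|2|_F^{t(w)/2}=(-1)^{t(w)}\cdot q^{-t(w)}|2|_F^{t(w)/2}$ reduces to $(-1)^{t(w)}$ in the $-$ case, yielding the claimed formula.

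The substantive input is Theorem~\ref{prop:compatibility-2}, so within this lemma the delicate point is the comparison of the normalizing factors: these $\gamma$-factors are not real in general, so it genuinely matters that they \emph{coincide} on the two sides, not merely up to absolute value --- this is precisely what forces the appeal to $\phi^{\circ}=\phi$ and to the equality of the parameters of $\pi_{\tilde{M}}$ and $\sigma_{\tilde{M}}$, rather than to a bare preservation-of-temperedness statement. A secondary point to keep straight is the consistent choice of representatives ($\tilde{w}$ on the metaplectic side, the Langlands--Shelstad $\dot{w}$ on the orthogonal side) and of the isomorphisms $A$, $A^{\pm}$, so that the hypotheses of Theorem~\ref{prop:compatibility-2} and of Definitions~\ref{def:R-operator}--\ref{def:Rpm-operator} really line up; once this bookkeeping is settled the argument is a short assembly.
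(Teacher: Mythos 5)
Your proposal is correct and takes essentially the same route as the paper: the paper's own proof is a one-line citation to \cite[\S\S 10.2--10.3]{CL23} (the material behind Theorem~\ref{prop:compatibility-2}) plus the remark about where the factor $(-q^{-1})^{t(w)}$ comes from, and what you have written out is precisely the three-way decomposition of $R(w)$ into normalizing factor, induced intertwiner, and standard operator that the cited reference performs, with the subsequent bookkeeping. Your observations --- that the matching of $A$ with $A^{\pm}$ is built into Definition~\ref{def:Rpm-operator}, that the $\gamma$-factor normalizers genuinely coincide (not merely up to modulus) because $\phi^{\circ}=\phi$ and $\pi_{\tilde{M}}$, $\sigma_{\tilde{M}}$ share the parameter $\phi_{\tilde{M}}$, and that $|2|_F^{t(w)/2}$ and $q^{-t(w)}$ fall away under $\sim$ --- are all exactly the right points to emphasize.
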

\begin{proof}
	This is the content of \cite[\S\S 10.2--10.3]{CL23} after taking $\sim$ classes. Note that we are using the same representatives of $w$ and the same Haar measures to define $R(w)$ in both the $\pm$ cases, in order to maintain compatibility with \cite{Is20}; this gives an extra factor $(-q^{-1})^{t(w)}$ when comparing $R(w)$ and $R^-(w)$, where $q$ is the residual cardinality of $F$; see \cite[Theorem 9.2.3, Remark 10.2.7]{CL23}.
\end{proof}

\subsection{Application of LIR}\label{sec:application-LIR}
Hereafter, we assume $\phi \in \Phi_{\mathrm{bdd}}(\tilde{G})$ is of good parity (Definition \ref{def:gp}). The aim is to show $\chi^\circ = \chi \nu_\phi$.

Since $M \subsetneq G$, the restrictions of $\chi^\circ$ and $\chi$ to $\EuScript{S}_{\phi_0}$ are known to be related by:
\begin{equation}\label{eqn:LIR-induction-chi}
	\chi^\circ_0 = \chi_0 \nu_{\phi_0}.
\end{equation}

Let $p$ be the composite of the maps $\mathrm{W}_\phi(\tilde{M}, \tilde{G}) \hookrightarrow \mathfrak{N}_\phi(\tilde{M}, \tilde{G}) \twoheadrightarrow \EuScript{S}_\phi$ in Lemma \ref{prop:surjectivity}. To attain our goal, by \eqref{eqn:LIR-nu}, \eqref{eqn:LIR-induction-chi} and Lemma \ref{prop:surjectivity} it suffices to show
\begin{equation}\label{eqn:to-discrete}
	\chi^\circ|_{p\mathrm{W}_\phi(\tilde{M}, \tilde{G})} = \left( \chi \nu_\phi \right)|_{p\mathrm{W}_\phi(\tilde{M}, \tilde{G})}.
\end{equation}

\begin{lemma}\label{prop:to-discrete}
	The equality \eqref{eqn:to-discrete} holds. Consequently, the statement $(\phi^\circ, \chi^\circ) = (\phi, \chi\nu_\phi)$ in Theorem \ref{prop:main} holds for $\pi$ by granting its validity for all square-integrable irreducible genuine representations of metaplectic groups of rank $< n$ lying in the avatars of $\mathcal{G}_{\bpsi}^{\pm}$.
\end{lemma}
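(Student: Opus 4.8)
The plan is to prove \eqref{eqn:to-discrete} by computing, for each $w \in \mathrm{W}_\phi(\tilde{M}, \tilde{G})$, the scalar $c(w) \in \CC^\times$ by which the normalized self-intertwining operator $R(w)$ of Definition \ref{def:R-operator} acts on the ($\pi$-isotypic, multiplicity-one) summand of $i_{\tilde{P}}(\pi_{\tilde{M}})$, in two different ways, and comparing. First observe that since $\phi_0 = \bigoplus_{i \in I^+_{\mathrm{odd}}}\phi_i$ involves only rank-zero orthogonal blocks, $\mathrm{W}_\phi(\tilde{M}, \tilde{G})$ acts through signed permutations on the $\GL$-blocks and even orthogonal/symplectic blocks of $\phi_{\tilde{M}}$ alone; hence ${}^{\tilde{w}}\pi_{\tilde{M}} \simeq \pi_{\tilde{M}}$ and ${}^{\dot{w}}\sigma_{\tilde{M}} \simeq \sigma_{\tilde{M}}$ for every such $w$, so that $R(w)$ and $R^{\pm}(w)$ are well defined once the definitive compatible pair $(A, A^{\pm})$ is fixed. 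Since these operators are unitary (the $A$, $A^{\pm}$ being unitary and the normalizing factors positive), $c(w)$ and the analogous scalar $c^{\pm}(w)$ for $R^{\pm}(w)$ on the $\sigma$-isotypic summand of $i_{P^{\pm}}(\sigma_{\tilde{M}})$ are roots of unity.

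For the first evaluation I would invoke the metaplectic local intertwining relation of Ishimoto \cite{Is20}. After translating from his representatives $\tilde{w}_{\mathrm{Ishi}}$ to the representatives $\tilde{w}$ of \S\ref{sec:tw} via Proposition \ref{prop:gamma-computation} (the resulting $\gamma_F(\bpsi)^{t(w)}$ being absorbed into the positive normalizing factor and the choice of $A$, hence invisible to $c(w)$), this yields $c(w) = (\chi\nu_\phi)(p(w))$. The root-number twist $\nu_\phi$ enters here in exactly the way the extra factor $\epsilon(\phi^{s=-1})$ enters Luo's metaplectic ECR \eqref{eqn:T} relative to Arthur's ECR.

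For the second evaluation I would transport $R(w)$ through $\mathrm{TW}^*$. By Theorem \ref{prop:compatibility-1} and the relation $\sigma_{\tilde{M}} = \mathrm{TW}^{\tilde{M}, *}(\pi_{\tilde{M}})$, the equivalence $\mathrm{TW}^*$ identifies $i_{\tilde{P}}(\pi_{\tilde{M}})$ with $i_{P^{\pm}}(\sigma_{\tilde{M}})$ and, being an equivalence of abelian categories, carries the $\pi$-isotypic summand onto the $\sigma$-isotypic summand; by Lemma \ref{prop:R-ratio} it carries $R(w)$ to $R^+(w)$ (resp.\ $(-1)^{t(w)}R^-(w)$) up to a positive real factor. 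Hence $c(w) \sim c^+(w)$ in the $+$ case and $c(w) \sim (-1)^{t(w)}c^-(w)$ in the $-$ case. Arthur's LIR \cite[Theorem 2.4.1]{Ar13} — and its counterpart for the non-split form in Ishimoto \cite{Is24}, which uses the Langlands--Shelstad representative $\dot{w}$ as in \cite[\S 2.3]{KMSW} — computes $c^{\pm}(w)$ in terms of $\chi^\circ$ and the map $p$; after bookkeeping these representatives and the normalization built into Lemma \ref{prop:R-ratio}, all spurious signs cancel and one obtains $c(w) = \chi^\circ(p(w))$ in both the $+$ and $-$ cases (genuine equality, not merely $\sim$, since everything is a root of unity). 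Comparing the two evaluations gives $(\chi\nu_\phi)(p(w)) = \chi^\circ(p(w))$ for every $w \in \mathrm{W}_\phi(\tilde{M}, \tilde{G})$, which is exactly \eqref{eqn:to-discrete}. The consequence then follows from the discussion preceding the lemma: \eqref{eqn:to-discrete}, \eqref{eqn:LIR-nu}, the (inductively valid, since $\tilde{G}^\flat$ has rank $< n$ and $\pi_0 = \pi_{\phi_0, \chi_0}$ is square-integrable) identity \eqref{eqn:LIR-induction-chi}, and the surjectivity in Lemma \ref{prop:surjectivity} together force $\chi^\circ = \chi\nu_\phi$ as characters of $\EuScript{S}_\phi = \EuScript{S}_{\phi_0}\times R_\phi$; with $\phi^\circ = \phi$ from Lemma \ref{prop:phi-tempered}, this is the assertion of Theorem \ref{prop:main} for $\pi$.

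I expect the principal difficulty to be the sign bookkeeping of the third paragraph: pinning down the definitive compatible pair $(A, A^{\pm})$ and then verifying that, after the representative change of Proposition \ref{prop:gamma-computation} on the metaplectic side, the Langlands--Shelstad representative $\dot{w}$ on the orthogonal side, and the $(-1)^{t(w)}$ of Lemma \ref{prop:R-ratio}, every extraneous root of unity cancels, so that the net discrepancy between the two local intertwining relations is precisely $\nu_\phi(p(w))$. Equivalently, the conceptual heart is to confirm that Ishimoto's metaplectic LIR \cite{Is20} genuinely produces the twisted value $(\chi\nu_\phi)(p(w))$ rather than $\chi(p(w))$ with our choices of representatives and normalizations; once that and Lemma \ref{prop:R-ratio} are in hand, the remainder is formal.
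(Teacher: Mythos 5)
Your overall strategy is essentially the paper's: compute the scalar by which the normalized self-intertwining operator $R(w)$ acts on the $\pi$-isotypic component once via Ishimoto's metaplectic LIR, once by transporting through $\mathrm{TW}^*$ and invoking the LIR on the $G^{\pm}$-side, and compare. Your second evaluation and the final assembly (using \eqref{eqn:LIR-nu}, \eqref{eqn:LIR-induction-chi}, and Lemma \ref{prop:surjectivity}) are also the paper's. But the first evaluation --- precisely the step you yourself single out as the conceptual heart --- is where your account is wrong.

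Ishimoto's LIR \cite[Theorem 4.2, Hypothesis 5.2]{Is20} says that \emph{his} normalized operator $\mathcal{R}(w)$ acts on $\pi$ by the scalar $\chi(x_w)$, not by $(\chi\nu_\phi)(x_w)$; there is no root-number twist coming from the LIR or from Luo's ECR, which plays no role in this lemma. The twist $\nu_\phi$ arises from the discrepancy between $\mathcal{R}(w)$ and the paper's $R(w)$: Ishimoto's operator carries the extra normalizing scalar $|2|_F^{2 y(w, \mathbf{s})} \, \gamma_F(\bpsi)^{\dim y(w, \phi)} \, \gamma\bigl( \tfrac{1}{2}, y(w, \phi_{\mathbf{s}}), \bpsi \bigr)^{-1}$ and uses the representative $\tilde{w}_{\mathrm{Ishi}}$. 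Your parenthetical claim that the $\gamma_F(\bpsi)^{t(w)}$ produced by Proposition \ref{prop:gamma-computation} is ``absorbed into the positive normalizing factor and hence invisible'' is incorrect: $\gamma_F(\bpsi)$ is an eighth root of unity, not a positive real. What actually happens is that this root of unity \emph{cancels exactly} against the $\gamma_F(\bpsi)^{\dim y(w,\phi)}$ in Ishimoto's normalizing scalar (using $t(w) = \dim y(w,\phi)$ and genuineness of $\pi_{\tilde{M}}$). After the cancellation one is left with $\mathcal{R}(w) : R(w) \sim \gamma\bigl(\tfrac{1}{2}, y(w,\phi),\bpsi\bigr)^{-1}$, and the substantive remaining computation --- which your proposal omits --- is to show this $\gamma$-factor equals $\epsilon\bigl(\tfrac{1}{2}, \phi_i, \bpsi\bigr) = \nu_\phi(x_w)$ for the chosen $w$, using holomorphy and non-vanishing of $L$-factors at $s>0$ for bounded parameters. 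So your final identity $c(w) = (\chi\nu_\phi)(p(w))$ is correct, but your explanation misattributes its source and skips the $\gamma/\epsilon$-factor argument that is the heart of the proof. A secondary remark: it suffices (and is cleaner) to treat only the generators $w = (1,0,\ldots,0)\rtimes 1$ for each $i \in I^+_{\mathrm{even}}$, since their $p$-images already generate $R_\phi$; general $w$ requires additional bookkeeping for the non-symplectic summands.
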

\begin{proof}
	Combining \eqref{eqn:LIR-induction-chi} and the description of $\mathfrak{N}_\phi(\tilde{M}, \tilde{G}) \twoheadrightarrow \EuScript{S}_\phi$ in \S\ref{sec:LIR-parameters}, it suffices to consider $p(w)$ for $w$ of the form
	\begin{align*}
		(1, 0, \ldots, 0) \rtimes 1 & \in (\Z/2\Z)^{m_i/2} \rtimes \mathfrak{S}_{m_i/2} \\
		& \simeq \text{the Weyl group of}\; \Or(m_i, \CC)
	\end{align*}
	for all $i \in I^+_{\mathrm{even}}$, since their images generate $R_\phi$. Fix $i \in I^+_{\mathrm{even}}$ and $w$ as above, and put $x_w := p(w) \in \EuScript{S}_\phi$.

	Recall that $\mathrm{W}_\phi(\tilde{M}, \tilde{G}) \hookrightarrow \Omega^G(M)$. In \cite[\S 7.3]{Is20}, Ishimoto defined normalized intertwining operators for $\tilde{G}$ and $G^\pm(F)$ (here evaluated at $\mathbf{s} = 0$), abbreviated here as $\mathcal{R}(w)$ and $\mathcal{R}^{\pm}(w)$ respectively, whose definitions will be reviewed later. They are proportional to $R(w)$ and $R^{\pm}(w)$.
	
	Since $R(w)$ and $R^\pm(w)$ are unitary operators, we may denote by $\mathcal{R}(w) : R(w)$ the unique complex number $z$ such that $z R(w) = \mathcal{R}(w)$ and $|z| = 1$; ditto for $\mathcal{R}^\pm(w) : R^\pm(w)$.
	
	The LIR for $\tilde{G}$ and $G^\pm$ \cite[Theorem 4.2, Hypothesis 5.2]{Is20} asserts that $\mathcal{R}(w)$ and $\mathcal{R}^{\pm}(w)$ act on $\pi$ and $\pi^{\pm}$ by scalars $\chi(x_w)$ and $\chi^\circ(x_w)$, respectively. To obtain \eqref{eqn:to-discrete}, we must show that they differ by $\nu_\phi(x_w)$, and it suffices to show this up to $\sim$. We contend that
	\begin{equation}\label{eqn:R-ratio-sim}
		\mathcal{R}(w) : R(w) \sim
		\begin{cases}
			\nu_\phi(x_w) \cdot \mathcal{R}^{\pm}(w) : R^{\pm}(w), & \text{in the $+$ case}, \\
			(-1)^{t(w)} \nu_\phi(x_w) \cdot \mathcal{R}^{\pm}(w) : R^{\pm}(w), & \text{in the $-$ case}.
		\end{cases}
	\end{equation}
	
	The relation \eqref{eqn:R-ratio-sim} entails the desired property, since Lemma \ref{prop:R-ratio} says that in the $+$ (resp.\ $-$) case, the actions of $R(w)$ and $R^+(w)$ (resp.\ $R^-(w)$) on $\pi$ and $\sigma$ coincide (resp.\ differ by $(-1)^{t(w)}$) via $\mathrm{TW}^*$ modulo $\sim$.
	
	To analyze the ratios in \eqref{eqn:R-ratio-sim}, we summarize the (potential) differences between Definitions \ref{def:R-operator}, \ref{def:Rpm-operator} and Ishimoto's definitions in \cite{Is20} below.
	\begin{itemize}
		\item According to \cite[Lemmas 10.2.1--10.2.2]{CL23}:
		\begin{itemize}
			\item conjugation by $\tilde{w}$ acts on the Hecke module of $\pi_{\tilde{M}}$ through a canonical automorphism $\xi$ of the factor $\bigotimes_{i=1}^r H_k$ in \eqref{eqn:H-decomp}.
			\item ditto for $\dot{w}$ and $\sigma_{\tilde{M}}$ on $M^{\pm}(F)$, with the same automorphism $\xi$.
		\end{itemize}
		
		The operators $\mathcal{R}(w)$ and $\mathcal{R}^\pm(w)$ in \textit{loc.\ cit.}\ involve isomorphisms
		\[ \mathcal{A}: {}^{\tilde{w}}\pi_{\tilde{M}} \to \pi_{\tilde{M}}, \quad \mathcal{A}^{\pm}: {}^{\dot{w}} \sigma_{\tilde{M}} \to \sigma_{\tilde{M}}. \]
		It suffices to describe them on the level of Hecke modules. They are Whittaker-normalized on $\mathrm{GL}$-factors, and identity on the metaplectic or orthogonal factor, respectively. In particular, they are both unitary operators that match under $\mathrm{TW}^{\tilde{M}, *}$. Thus one can take $(A, A^\pm) := (\mathcal{A}, \mathcal{A}^{\pm})$ in the definition of $R(w)$ and $R^{\pm}(w)$.

		\item Define the representation $y(w, \phi) = y(w, \phi_{\mathbf{s} = 0})$ of $\mathcal{L}_F$ as in \cite[p.1575]{Is20}. Unraveling the definition of $\mathcal{R}^\pm(w)$ there, we see
		\[ \mathcal{R}^{\pm}(w) : R^{\pm}(w) \sim \epsilon(V^{\pm})^{\dim y(w, \phi)} = (\pm 1)^{\dim y(w, \phi)}; \]
		in the $-$ case it also equals $(-1)^{t(w)}$. Indeed, $t(w) = \dim y(w, \phi)$ by comparing \textit{loc.\ cit.}\ with \S\ref{sec:tw}.
	\end{itemize}
	
	Compared with Definition \ref{def:R-operator}, Ishimoto's $\mathcal{R}(w)$ contains the extra scalar
	\[ |2|_F^{2 y(w, \mathbf{s})} \gamma_F(\bpsi)^{\dim y(w, \phi)} \gamma\left( \frac{1}{2}, y(w, \phi_{\mathbf{s}}), \bpsi \right)^{-1} \]
	evaluated at $\mathbf{s} = 0$; the choice of representative of $w$ also differs.
	
	\begin{itemize}
		\item The factor $|2|_F^{2 y(w, \mathbf{s})}$ in $\mathcal{R}(w)$ is $1$ at $\mathbf{s} = 0$.
		
		\item In the intertwining integral in $\mathcal{R}(w)$, the representative $\tilde{w}_{\mathrm{Ishi}} \in \tilde{G}^{(2)}$ reviewed in \S\ref{sec:tw} is used; note that ${}^{\tilde{w}_{\mathrm{Ishi}}} \pi = {}^{\tilde{w}} \pi$. We infer from Proposition \ref{prop:gamma-computation} that
		\begin{equation*}
			\tilde{w} = \gamma_F(\bpsi)^{-t(w)} \tilde{w}_{\mathrm{Ishi}} = \gamma_F(\bpsi)^{-\dim y(w, \phi)} \tilde{w}_{\mathrm{Ishi}}.
		\end{equation*}
	
		Since $\tilde{w}^{-1}$ and $\tilde{w}_{\mathrm{Ishi}}^{-1}$ appear in the intertwining integrals, whilst $\pi_{\tilde{M}}$ is genuine, the factor $\gamma_F(\bpsi)^{\dim y(w, \phi)}$ in $\mathcal{R}(w)$ disappears if $\tilde{w}_{\mathrm{Ishi}}$ is replaced by $\tilde{w}$.
		\item Summing up,
		\[ \mathcal{R}(w) : R(w) \sim \gamma\left( \frac{1}{2}, y(w, \phi), \bpsi \right)^{-1}. \]
	\end{itemize}
	
	Now \eqref{eqn:R-ratio-sim} is reduced to
	\[ \gamma\left( \frac{1}{2}, y(w, \phi), \bpsi \right)^{-1} = \nu_\phi(x_w). \]
	By the choice of $w$, the left hand side is $\gamma\left(\frac{1}{2}, \phi_i , \bpsi \right)^{-1}$ and the right hand side is $\epsilon\left( \frac{1}{2}, \phi_i, \bpsi \right) = \epsilon\left( \frac{1}{2}, \phi_i, \bpsi \right)^{-1}$ as $\phi_i$ is self-dual of symplectic type. By the relation between $\gamma$ and $\epsilon$ factors (see \cite[p.441]{GR10}) and the property (same reference) that for all bounded L-parameter $\phi_{\mathrm{bdd}}: \mathcal{L}_F \to \GL(N, \CC)$ we have
	\[ s > 0 \implies L\left(s, \phi_{\mathrm{bdd}} \right) \in \CC^{\times}, \]
	the equality follows.
\end{proof}

\begin{remark}
	If $\phi \in \Phi_{\mathrm{bdd}}(\tilde{G})$ is no longer of good parity and $w$ is allowed to be a general element of $\mathrm{W}_\phi(\tilde{M}, \tilde{G})$, the arguments above still work. Although this extension is not mandatory for the proof, we give a sketch below.
	
	Since $M \subsetneq G$, Proposition \ref{prop:phi-nr} is known for $\phi$: in fact, its proof in \S\ref{sec:completion-proof} goes by reduction to square-integrable case. Hence $\phi_i|_{I_F}$ is trivial for all $i \in I$. We have $\gamma\left(\frac{1}{2}, \phi_i, \bpsi \right) = \epsilon\left(\frac{1}{2}, \phi_i, \bpsi \right)$ as before. Suppose $i \in I \smallsetminus I^+$.
	\begin{itemize}
		\item If $\phi_i$ is not self-dual, its contribution in $\gamma\left(\frac{1}{2}, y(w, \phi), \bpsi \right)$ will occur in pairs $\{\phi_i, \check{\phi}_i\}$, hence trivial by $\epsilon\left(\frac{1}{2}, \phi_i, \bpsi \right) \epsilon\left(\frac{1}{2}, \check{\phi}_i, \bpsi \right) = (\det\phi_i)(-1) = 1$.
		\item If $\phi_i$ is self-dual of orthogonal type, then $\phi_i = \chi \boxtimes r(a)$ for some unramified quadratic character $\chi$ of $F^{\times}$ and an odd $a \in \Z_{\geq 1}$. Let $\Frob \in \Weil{F}$ be a geometric Frobenius. By \cite[\S 2.2]{GR10}:
		\[ \epsilon\left( \frac{1}{2}, \phi_i, \bpsi \right) = \epsilon\left(\frac{1}{2}, \chi, \bpsi \right)^a (-\chi(\Frob))^{a-1} = \epsilon\left(\frac{1}{2}, \chi, \bpsi \right). \]
		Furthermore, the explicit formula \cite[(3.2.6.1)]{Ta79} for unramified $\epsilon$-factors and the fact that $\bpsi$ has conductor $4\mathfrak{o}$ imply $\epsilon\left(\frac{1}{2}, \chi, \bpsi \right) = 1$.
	\end{itemize}
	Hence one can argue as before to prove \eqref{eqn:R-ratio-sim}, considering only the contribution from $I^+$.
\end{remark}

\section{Reduction via Jacquet modules}\label{sec:Jacquet}
\subsection{Jacquet modules and transfer}\label{sec:Jacquet-transfer}
Let $\tilde{G} = \Mp(W)$ with $\dim W = 2n$ as before. Consider
\begin{itemize}
	\item a unitary irreducible supercuspidal representation $\rho$ of $\mathrm{GL}(d_\rho, F)$ where $1 \leq d_\rho \leq n$,
	\item $x \in \CC$ and the corresponding character $|\cdot|^x := |\det|^x$ of $\mathrm{GL}(d_\rho, F)$.
\end{itemize}

Let $P \supset B^{\leftarrow}$ (resp.\ $\supset P^1$) in the $+$ (resp.\ $-$) case be a parabolic subgroup of $G$ with Levi factor
\begin{gather*}
	M = G_- \times \GL(d_\rho) \supset T, \\
	G_- = \Sp(W_-), \quad W_- \subset W: \text{symplectic subspace}, \quad \dim W_- = 2n - 2d_\rho.
\end{gather*}

Let $\mathrm{Groth}(\tilde{G})$ be the $\mathrm{K}_0$-group of the abelian subcategory of $\tilde{G}\dcate{Mod}$ consisting of objects of finite length; it can also be identified with the $\Z$-module generated by irreducible genuine characters on $\tilde{G}$. Ditto for $\mathrm{Groth}(\tilde{G}_-)$.

Consider the functor of normalized Jacquet modules $r_{\tilde{P}}: \tilde{G}\dcate{Mod} \to \tilde{M}\dcate{Mod}$.

\begin{definition}\label{def:r-rho}
	Given $\rho$ and $x$ as above, for all $\pi \in \Pi_-(\tilde{G})$ we define $r^{\tilde{G}}_{\rho, x}(\pi)$ by extracting the part with $\GL(d_\rho, F)$-factor equal to $\rho |\cdot|^x$ in the semi-simplification of $r_{\tilde{P}}(\pi)$. This yields a homomorphism
	\[ r^{\tilde{G}}_{\rho, x}: \mathrm{Groth}(\tilde{G}) \to \mathrm{Groth}(\tilde{G}_-). \]
\end{definition}

Surely, the foregoing constructions pertain to odd orthogonal groups as well.

Now consider the following scenario. Let $\mathbf{G}^! \in \Endo_{\elli}(\tilde{G})$ be parameterized by $(n', n'') \in \Z_{\geq 0}^2$, so that
\[ G^! = \mathrm{SO}(2n'+1) \times \mathrm{SO}(2n''+1). \]
Let $\phi \in \Phi_{2, \mathrm{bdd}}(\tilde{G})$ and take the
\begin{itemize}
	\item $\phi^! = (\phi', \phi'') \in \Phi_{2, \mathrm{bdd}}(G^!)$ such that $\phi^! \mapsto \phi$ via the map $\Phi(G^!) \to \Phi(\tilde{G})$ induced by $(G^!)^\vee \hookrightarrow \tilde{G}^\vee$,
	\item the stable tempered character $S\Theta^{G^!}_{\phi^!}$ on $G^!(F)$, and
	\item its transfer $\trans_{\mathbf{G}^!, \tilde{G}}(S\Theta^{G^!}_{\phi^!})$ to $\tilde{G}$.
\end{itemize}
Therefore $\mathrm{Jord}(\phi) = \mathrm{Jord}(\phi') \sqcup \mathrm{Jord}(\phi'')$ (Definition \ref{def:Jord}).

Consider $(\rho, a) \in \mathrm{Jord}(\phi)$ and take $M := G_- \times \mathrm{GL}(d_\rho) \subset G$ as before. Take the Levi subgroup $M^! \subset G^!$, unique up to conjugacy, such that
\begin{itemize}
	\item $M^! = \SO(2n'_- + 1) \times \SO(2n''_- + 1) \times \GL(d_\rho)$ where $n'_- + n''_- = \frac{1}{2} \dim W_-$, so that we get $\mathbf{M}^! \in \Endo_{\elli}(\tilde{M})$ (see Remark \ref{rem:Endo-M});
	\item the factor $\GL(d_\rho)$ in $M^!$ goes into $\SO(2n'+1)$ (resp.\ $\SO(2n''+1)$) when $(\rho, a) \in \mathrm{Jord}(\phi')$ (resp.\ $(\rho, a) \in \mathrm{Jord}(\phi'')$).
\end{itemize}

Let $\mathbf{G}^!_-$ be $\mathbf{M}^!$ with $\mathrm{GL}(d_\rho)$ removed, thus $\mathbf{G}^!_- \in \Endo_{\elli}(\tilde{G}_-)$. Definition \ref{def:r-rho} gives $r^{G^!}_{\rho, x}: \mathrm{Groth}(G^!) \to \mathrm{Groth}(G^!_-)$ relative to $M^! \subset G^!$.

\begin{proposition}[{\cite[Corollary 3.3]{C24}}]\label{prop:alpha}
	Let $\mathbf{G}^!$, $\phi$, $\phi^!$ and $(\rho, a)$ be as above. For $x := \frac{a-1}{2}$, we have the following identity in $\mathrm{Groth}(\tilde{G}_-)$:
	\[ \alpha r^{\tilde{G}}_{\rho, x} \trans_{\mathbf{G}^!, \tilde{G}} (S\Theta^{G^!}_{\phi^!}) = \trans_{\mathbf{G}^!_-, \tilde{G}_-} r^{G^!}_{\rho, x}(S\Theta^{G^!}_{\phi^!}), \]
	where
	\[ \alpha := \begin{cases}
		1, & (\rho, a) \in \mathrm{Jord}(\phi') \\
		\omega_\rho(-1), & (\rho, a) \in \mathrm{Jord}(\phi'').
	\end{cases}\]
\end{proposition}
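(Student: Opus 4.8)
\emph{Strategy.} The plan is to combine two essentially independent inputs: Moeglin's description of the constant terms of stable tempered characters on odd special orthogonal groups, and the parabolic descent of the metaplectic transfer $\mathcal{T}^\vee_{\mathbf{G}^!, \tilde{G}}$, with careful attention to the normalizing constant. That $\phi \in \Phi_{2,\mathrm{bdd}}(\tilde{G})$ makes $\mathrm{Jord}(\phi^!)$ multiplicity-free, which keeps both inputs in their cleanest form.

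\emph{The orthogonal side.} Since $G^! = \SO(2n'+1) \times \SO(2n''+1)$ and $\phi^! = (\phi', \phi'')$, one has $S\Theta^{G^!}_{\phi^!} = S\Theta^{\SO(2n'+1)}_{\phi'} \boxtimes S\Theta^{\SO(2n''+1)}_{\phi''}$, and the $\GL(d_\rho)$-factor of $M^!$ lies in the first (resp.\ second) orthogonal factor according as $(\rho, a) \in \mathrm{Jord}(\phi')$ (resp.\ $(\rho, a) \in \mathrm{Jord}(\phi'')$); so the Jacquet functor $r_{P^!}$ only affects that tensor factor. By Moeglin's analysis of Jacquet modules of stable discrete packets, recalled in \cite[\S 8.3.4]{Rel18}, and using that $x = \frac{a-1}{2}$ pins down the top segment $\rho|\cdot|^x$, the $\rho|\cdot|^x$-isotypic part of $r_{P^!}(S\Theta^{G^!}_{\phi^!})$ is $S\Theta^{G^!_-}_{\phi^!_-} \boxtimes \rho|\cdot|^x$, where $\phi^!_-$ shrinks the block $\rho \boxtimes r(a)$ to $\rho \boxtimes r(a-2)$ (removing it when $a \le 2$). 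In particular $r^{G^!}_{\rho, x}$ preserves stability, and the right-hand side of Proposition~\ref{prop:alpha} equals $\mathcal{T}^\vee_{\mathbf{G}^!_-, \tilde{G}_-}(S\Theta^{G^!_-}_{\phi^!_-})$.

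\emph{The metaplectic side.} The second input is the compatibility of $\mathcal{T}^\vee_{\mathbf{G}^!, \tilde{G}}$ with normalized constant terms: with $M = \Sp(W_-) \times \GL(d_\rho)$, the Levi $M^!$ and $\mathbf{M}^! \in \Endo_{\elli}(\tilde{M}) = \Endo_{\elli}(\tilde{G}_-)$ as in the statement, one should have $r_{\tilde{P}} \circ \mathcal{T}^\vee_{\mathbf{G}^!, \tilde{G}} = \alpha^{-1} \cdot \mathcal{T}^\vee_{\mathbf{M}^!, \tilde{M}} \circ r_{P^!}$ on stable tempered distributions. By duality this is the parabolic descent of the transfer of orbital integrals, which rests on the descent behaviour of Li's metaplectic transfer factor $\Delta_{\mathbf{G}^!, \tilde{G}}$ along $\tilde{P}$ (\cite{Li11, Li19}): in Arthur's setting the descent of transfer factors holds with constant $1$, and the metaplectic correction is precisely the scalar $\alpha^{-1}$, measuring the discrepancy between $\Delta_{\mathbf{G}^!, \tilde{G}}$ restricted along $\tilde{P}$ and the transfer factor that defines $\mathbf{M}^!$ via Remark~\ref{rem:Endo-M}. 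Feeding in $\mathcal{T}^\vee_{\mathbf{M}^!, \tilde{M}} = \mathcal{T}^\vee_{\mathbf{G}^!_-, \tilde{G}_-} \boxtimes \identity_{\GL(d_\rho, F)}$ (Remark~\ref{rem:Endo-M}), and using that the extraction of the $\rho|\cdot|^x$-isotypic part commutes with the $\GL(d_\rho)$-factor of the transfer since the latter is the identity, the orthogonal and metaplectic sides chain together to give $r^{\tilde{G}}_{\rho, x} \mathcal{T}^\vee_{\mathbf{G}^!, \tilde{G}}(S\Theta^{G^!}_{\phi^!}) = \alpha^{-1} \mathcal{T}^\vee_{\mathbf{G}^!_-, \tilde{G}_-}(r^{G^!}_{\rho, x}(S\Theta^{G^!}_{\phi^!}))$, which is the claim. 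What remains is to evaluate $\alpha$: it is a Weil-index (Gauss-sum) computation inside Li's transfer factor, sensitive to whether the $\GL(d_\rho)$-block of $M^!$ lies in the $(+1)$- or the $(-1)$-eigenspace of $s$; in the $(+1)$ case that block does not enter the Weil-index term and $\alpha = 1$, whereas in the $(-1)$ case one picks up the central sign $\omega_\rho(-1)$ (which is $\pm 1$ as $\rho$ is self-dual). This dichotomy is exactly $\mathrm{Jord}(\phi')$ versus $\mathrm{Jord}(\phi'')$.

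\emph{Main obstacle.} The crux is this last evaluation --- tracking Li's metaplectic transfer factor, with its Weil-index correction, through parabolic descent and isolating the sign $\omega_\rho(-1)$ attached to the $(-1)$-eigenspace factor. The ancillary points --- that the normalized Jacquet functors respect stability, so the identity genuinely takes place in $\mathrm{Groth}(\tilde{G}_-)$, and that the Haar measures on all the groups involved are matched consistently --- are routine once the transfer-factor descent is in hand.
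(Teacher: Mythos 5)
The paper does not prove this proposition; it cites \cite[Corollary 3.3]{C24} and only gives a one-paragraph remark indicating the strategy: it is a metaplectic analog of Moeglin's Jacquet-module formula (\cite[\S 2.6]{Moe14}, \cite[(6.3)]{Xu17}), in which only the summand for $M^!$ survives, and the sign $\alpha$ is exactly the factor that appears when commuting parabolic induction with the metaplectic transfer, citing \cite[Theorem 3.4.6]{Li19} for that commutativity. Your proposal reconstructs essentially this route: Moeglin's computation on the orthogonal side (and the observation that, since $\mathrm{Jord}(\phi)$ is multiplicity-free, the $\GL(d_\rho)$-block of $M^!$ sits in exactly one $\SO$-factor, so only one summand contributes); then parabolic descent of $\trans_{\mathbf{G}^!,\tilde{G}}$ with a metaplectic correction, chained together. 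This is consistent with the paper's hint and with how Proposition~\ref{prop:alpha} is actually used downstream in Lemma~\ref{prop:r-transfer}.

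The one place where your argument is only a sketch is exactly the one place where all the work lies, and you say so yourself: the assertion that the discrepancy in transfer-factor descent is precisely $\alpha^{-1}$. Two remarks on this. First, as phrased the claim is slightly off conceptually: the discrepancy coming from restricting $\Delta_{\mathbf{G}^!,\tilde{G}}$ along $\tilde{P}$ is not a scalar but a character on the $\GL(d_\rho,F)$-factor of the Levi (sensitive to which eigenspace of $s$ the block sits in); the scalar $\alpha$ only materializes after extracting the $\rho|\cdot|^x$-isotypic part and evaluating that character on the central support of $\rho$, giving $\omega_\rho(-1)$ in the $(-1)$-eigenspace case. Second, this is not something you can wave at as ``a Weil-index (Gauss-sum) computation'' without more; it is exactly the content of \cite[Theorem 3.4.6]{Li19}, which the authors invoke as a black box, and which already records the sign in the required form. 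You should cite that theorem directly instead of promising to re-derive it. With that substitution, the proposal is a faithful reconstruction of the intended argument.
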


Proposition \ref{prop:alpha} is a metaplectic analog of Moeglin's result \cite[\S 2.6]{Moe14}, or more precisely of \cite[(6.3)]{Xu17}. In the diagram of the latter reference, there are a priori two summands on the right hand side, but only the one corresponding to $M^!$ contributes. The sign $\alpha$ is a metaplectic feature here. It arises when commuting parabolic induction and transfer \cite[Theorem 3.4.6]{Li19}.

\subsection{Reduction to smaller metaplectic groups}\label{sec:reduction-smaller}
Return to the setting of Theorem \ref{prop:main}. We focus on the following scenario:
\begin{itemize}
	\item $\pi \in \Pi_{2, -}(\tilde{G})$ belonging to $\mathcal{G}_{\bpsi}^{\pm}$;
	\item $\sigma := \mathrm{TW}^*(\pi) \in \Pi_2(G^{\pm})$, which belongs to $\mathcal{G}^\pm$;
	\item $\phi \in \Phi_{2, \mathrm{bdd}}(\tilde{G})$ is the common L-parameter of $\pi$ and $\sigma$ by Lemma \ref{prop:phi-tempered}.
\end{itemize}

The discussions below are largely modeled on \cite[Chapter 8]{Rel18}. As in \S\ref{sec:Jacquet-transfer}, given $(\rho, a) \in \mathrm{Jord}(\phi)$, we put
\[ \tilde{G}_- := \widetilde{\mathrm{Sp}}(W_-), \quad W_- \subset W, \quad \dim W_- = \dim W - 2d_\rho. \]

The story for $G^{\pm}$ is parallel. We put $G^{\pm}_- := \SO(V^{\pm}_-)$ where $V^{\pm}_- \subset V^{\pm}$ has discriminant $1$, Hasse invariant $\pm 1$ and satisfies $\dim V^{\pm}_- = \dim V^{\pm} - 2d_\rho$. We view $G^{\pm}_- \times \GL(d_\rho)$ as the Levi factor of a parabolic subgroup $P^{\pm} \supset P_{\min}^{\pm}$ of $G^{\pm}$.

\begin{lemma}\label{prop:Jord-r}
	Suppose $n > 1$, then there exists $(\rho, a) \in \mathrm{Jord}(\phi)$ such that
	\begin{enumerate}[(i)]
		\item $\rho$ is unramified, $a \geq 2$ and
		\[ r^{\tilde{G}}_{\rho, (a-1)/2}(\pi) \neq 0, \quad r^{G^{\pm}}_{\rho, (a-1)/2}(\sigma) \neq 0; \]
		\item there exists an irreducible representation $\pi^\flat$ of $\tilde{G}_-$ (resp.\ $\sigma^\flat$ of $G^{\pm}_-(F)$) such that $\pi \hookrightarrow i_{\tilde{P}}\left(\rho |\cdot|^{(a-1)/2} \boxtimes \pi^\flat \right)$ (resp.\ $\sigma \hookrightarrow i_{P^{\pm}}\left( \rho |\cdot|^{(a-1)/2} \boxtimes \sigma^\flat \right)$) for an appropriate parabolic subgroup $P$ (resp.\ $P^{\pm}$); moreover we have $P \leftrightarrow P^{\pm}$.
	\end{enumerate}
\end{lemma}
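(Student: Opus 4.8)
The plan is to establish the two assertions first on the special orthogonal side $G^{\pm}$, where they amount to a reformulation of Moeglin's explicit construction of discrete series, and then to transport them to $\tilde{G}$ through the compatibility of $\mathrm{TW}^*$ with Jacquet modules and parabolic induction recorded in Theorem~\ref{prop:compatibility-1}.

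First I would note that $\pi$, hence also $\sigma = \mathrm{TW}^*(\pi)$, fails to be supercuspidal: by the description of the blocks in \S\ref{sec:blocks}, $\pi$ is a subquotient of $i_{\tilde{B}^{\leftarrow}}(\alpha)$ in the $+$ case or of $i^{\tilde{G}}_{\tilde{P}^1}(\omega_{\bpsi}^{1, -} \boxtimes \beta)$ in the $-$ case, for suitable unramified $\alpha$ (resp.\ $\beta$), and the inducing parabolic is proper because $n > 1$; applying $\mathrm{TW}^*$, which intertwines parabolic induction by Theorem~\ref{prop:compatibility-1}, shows $\sigma$ is likewise a subquotient of a representation induced from a proper parabolic of $G^{\pm}$. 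Next I would record the shape of $\mathrm{Jord}(\phi)$: it is multiplicity-free since $\phi \in \Phi_{2, \mathrm{bdd}}(\tilde{G})$, and for each $(\rho, a) \in \mathrm{Jord}(\phi)$ the twist $\rho|\cdot|^{(a-1)/2}$ occurs in the supercuspidal support of $\sigma$, which --- $\sigma$ lying in the Iwahori-spherical block $\mathcal{G}^{\pm}$ --- is conjugate to a base point whose $\GL$-part is a collection of unramified characters of copies of $\GL(1)$. Hence $d_\rho = 1$, the character $\rho$ of $F^{\times}$ is unramified (and quadratic, since $\rho \boxtimes r(a)$ is self-dual of symplectic type), and $a$ is even, in particular $a \geq 2$; in passing this reproves Proposition~\ref{prop:phi-nr} for $\phi$.

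With this in hand I would invoke Moeglin's construction of discrete series for odd special orthogonal groups --- see \cite[\S 8.3.4]{Rel18}, and \cite{Is24} for the non-split $G^-$ --- applied to the non-supercuspidal discrete series $\sigma$ with parameter $\phi$: it produces some $(\rho, a) \in \mathrm{Jord}(\phi)$ together with an irreducible (discrete series) representation $\sigma^\flat$ of $G^{\pm}_-(F)$, with $\dim V^{\pm}_- = \dim V^{\pm} - 2 d_\rho$, such that $\sigma \hookrightarrow i_{P^{\pm}}(\rho|\cdot|^{(a-1)/2} \boxtimes \sigma^\flat)$, where $P^{\pm}$ has Levi $G^{\pm}_- \times \GL(d_\rho)$ and may be taken standard relative to $P_{\min}^{\pm}$. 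By the previous paragraph $\rho$ is unramified and $a \geq 2$, and Frobenius reciprocity gives $r^{G^{\pm}}_{\rho, (a-1)/2}(\sigma) \neq 0$; moreover $d_\rho = 1 < n$, so $P^{\pm}$ corresponds under \eqref{eqn:P-corr} to a parabolic $P$ with $P \supset B^{\leftarrow}$ (the $+$ case) or $P \supset P^1$ (the $-$ case), $P \leftrightarrow P^{\pm}$. Finally I would transport back: by Theorem~\ref{prop:compatibility-1} the functors $i_{\tilde{P}}$, $r_{\tilde{P}}$ match $i_{P^{\pm}}$, $r_{P^{\pm}}$ under $\mathrm{TW}^*$, $\mathrm{TW}^{\tilde{M}, *}$, and $\mathrm{TW}^{\tilde{M}}$ is the identity on the $\GL(d_\rho) = \GL(1)$-factor, so it fixes the datum $\rho|\cdot|^{(a-1)/2}$; writing $\pi^\flat$ for the representation matched with $\sigma^\flat$ under $\mathrm{TW}^{\flat, *}$, I obtain $\pi \hookrightarrow i_{\tilde{P}}(\rho|\cdot|^{(a-1)/2} \boxtimes \pi^\flat)$ and $r^{\tilde{G}}_{\rho, (a-1)/2}(\pi) \neq 0$, which is (i) and (ii).

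The hard part will be the appeal to Moeglin's classification in the third paragraph: extracting a segment $(\rho, a)$ with $a \geq 2$ that can be stripped from the front of the discrete series $\sigma$, and checking that Ishimoto's work supplies the corresponding statement for the non-split $G^-$. Everything else is bookkeeping with the compatibilities of $\mathrm{TW}^*$ and with the shape of the Iwahori-spherical block; in particular no use of the metaplectic endoscopic character relations is needed for this lemma --- those enter the subsequent comparison of the characters $\chi$ and $\chi^{\circ}$.
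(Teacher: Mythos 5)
Your proof is correct and follows essentially the same route as the paper: reduce everything to the orthogonal side via Theorem~\ref{prop:compatibility-1}, observe that the Iwahori-spherical block forces the $\GL$-factors of the cuspidal support to be unramified characters of $\GL(1)$, and then invoke Moeglin's construction (\cite[\S 8.3.4]{Rel18}) to identify $x = (a-1)/2$ with $(\rho,a) \in \mathrm{Jord}(\phi)$. The extra observations you make (all $a$ even, the non-split case via Ishimoto, the incidental reproof of unramifiedness of $\phi$) are sound but not needed; the paper simply phrases the argument more tersely, letting Moeglin's cited result deliver the conclusion without first working out the full shape of $\mathrm{Jord}(\phi)$.
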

\begin{proof}
	By leveraging Theorem \ref{prop:compatibility-1}, the case of $\pi$ reduces to that of $\sigma$ since they share the same $L$-parameter $\phi$.
	
	By the description of $\mathcal{G}^{\pm}$ in terms of cuspidal supports and the assumption $n > 1$, there exists $(\rho, x)$ such that $\rho$ is an unramified unitary character of $F^{\times}$, $x \in \CC$, and $r_{P^{\pm}}(\sigma)$ has an irreducible quotient of the form $\rho|\cdot|^x \boxtimes \sigma^\flat$ for appropriate $P^{\pm}$. In particular, $r^{G^{\pm}}_{\rho, x}(\sigma) \neq 0$. By adjunction, $\sigma$ embeds into $i_{P^{\pm}}\left( \rho|\cdot|^x \boxtimes \sigma^\flat \right)$. 
	
	In \cite[\S 8.3.4]{Rel18} it is shown that there exists $a \geq 2$ such that $x = \frac{a-1}{2}$ and $(\rho, a) \in \mathrm{Jord}(\phi)$.
\end{proof}

For all $(\rho, a) \in \mathrm{Jord}(\phi)$ with $\rho$ unramified (hence $d_\rho = 1$) and $a \geq 2$, define
\begin{equation}
	\phi_- \in \Phi_{\mathrm{bdd}}(\tilde{G}_-)
\end{equation}
by the following recipe: if $a > 2$, we replace $\rho \boxtimes r(a)$ by $\rho \boxtimes r(a-2)$ in the decomposition of $\phi$ to obtain $\phi_-$; if $a=2$ then $\phi_-$ is simply obtained by removing $\rho \boxtimes r(2)$ from $\phi$. It is the empty parameter if $n=1$. Observe that $\phi_-$ is of good parity, but not necessarily in $\Phi_{2, \mathrm{bdd}}(\tilde{G}_-)$.

Write $\phi = \bigoplus_{i \in I} \phi_i$. The description in \eqref{eqn:S} reduces to $S_\phi = \prod_{i \in I} \Or(1, \CC)$. Given $(\rho, a)$ as above, take the unique $i_0 \in I$ such that $\phi_{i_0} = \rho \boxtimes r(a)$. There is a canonical homomorphism $S_\phi \to S_{\phi_-}$ described as follows.

\begin{enumerate}
	\item If $a > 2$ and $\rho \boxtimes r(a-2) \notin \mathrm{Jord}(\phi)$, then $S_{\phi_-} = \prod_{i \in I} \Or(1, \CC)$ and $S_\phi \rightiso S_{\phi_-}$ is tautological.
	\item If $a > 2$ and $\rho \boxtimes r(a-2) = \phi_{i_1}$ for some $i_1 \in I$, then
	\[ S_{\phi_-} = \Or(2, \CC) \times \prod_{i \in I \smallsetminus \{i_0, i_1 \}} \Or(1, \CC), \]
	and $S_\phi \to S_{\phi_-}$ is $\Or(1, \CC) \times \Or(1, \CC) \hookrightarrow \Or(2, \CC)$ on the indices $i_0, i_1$, identity elsewhere.
	\item If $a = 2$ then $S_{\phi_-} = \prod_{i \in I \smallsetminus \{i_0\}} \Or(1, \CC)$, and $S_\phi \to S_{\phi_-}$ is the evident projection.
\end{enumerate}

One deduces a canonical homomorphism $\EuScript{S}_\phi \to \EuScript{S}_{\phi_-}$. The following is evident.

\begin{lemma}\label{prop:S-minus}
	There is a canonical short exact sequence
	\[ 1 \to \EuScript{T} \to \EuScript{S}_\phi \to \EuScript{S}_{\phi_-} \to 1. \]
	\begin{itemize}
		\item In the case 1 above, $\EuScript{T} = \{1\}$.
		\item In the case 2 above, $\EuScript{T} \simeq \bmu_2$ embedded diagonally in the $\{i_0, i_1\}$-summands of $\EuScript{S}_\phi$.
		\item In the case 3 above, $\EuScript{T} \simeq \bmu_2$ embedded as the $i_0$-summand of $\EuScript{S}_\phi$.
	\end{itemize}
\end{lemma}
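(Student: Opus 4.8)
The plan is to treat Lemma~\ref{prop:S-minus} as a bookkeeping exercise on component groups. The key observation is that, since $\phi \in \Phi_{2,\mathrm{bdd}}(\tilde{G})$ is discrete of good parity with all multiplicities equal to one, \eqref{eqn:S} gives $S_\phi = \prod_{i \in I}\Or(1,\CC)$, which is already a finite group; hence $\EuScript{S}_\phi = S_\phi = \bmu_2^I$. I would then apply the functor $\pi_0(-)$ to the canonical homomorphism $S_\phi \to S_{\phi_-}$ recalled just before the statement, obtaining a canonical homomorphism $\EuScript{S}_\phi \to \EuScript{S}_{\phi_-}$; surjectivity and the identification of the kernel $\EuScript{T}$ then have to be verified in each of the three cases (1)--(3).

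In Case (1) the map $S_\phi \to S_{\phi_-}$ is an isomorphism, so $\EuScript{T} = \{1\}$ and there is nothing more to do. In Case (3) the map $S_\phi \to S_{\phi_-}$ is the projection forgetting the $i_0$-factor $\Or(1,\CC)$; passing to $\pi_0$ gives the coordinate projection $\bmu_2^I \twoheadrightarrow \bmu_2^{I \smallsetminus \{i_0\}}$, which is manifestly surjective with kernel the $i_0$-summand $\simeq \bmu_2$. The only case with any content is (2): off the coordinates $i_0, i_1$ the map is the identity, and on those two coordinates it is the block-diagonal embedding $\Or(1,\CC) \times \Or(1,\CC) \hookrightarrow \Or(2,\CC)$. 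Since $\pi_0$ of each orthogonal group is $\bmu_2$ via the determinant, and the determinant of a block-diagonal matrix is the product of the block determinants, the induced map on $\pi_0$ is the multiplication $\bmu_2 \times \bmu_2 \to \bmu_2$, which is surjective with kernel the diagonal $\bmu_2$. This yields the sequence with $\EuScript{T} \simeq \bmu_2$ diagonal in the $\{i_0, i_1\}$-summands of $\EuScript{S}_\phi$.

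There is essentially no obstacle here; the one point worth flagging is a matter of matching conventions rather than a difficulty. One must make sure that the symplectic self-dual summand $\rho \boxtimes r(a-2)$, which acquires multiplicity two in $\phi_-$ in Case (2), contributes an $\Or(2,\CC)$ factor to $S_{\phi_-}$ in the sense of \eqref{eqn:S}, and that under this identification the homomorphism coming from breaking up the two copies of $\rho \boxtimes r(a-2)$ is the block-diagonal embedding $\Or(1,\CC)^2 \hookrightarrow \Or(2,\CC)$. Note in particular that the homomorphism $S_\phi \to S_{\phi_-}$ is itself injective in Case (2), so the kernel $\EuScript{T} \simeq \bmu_2$ on $\pi_0$ is created purely by the failure of $\pi_0$ to be left exact --- concretely, by the fact that $\Or(1,\CC)^2$ meets both connected components of $\Or(2,\CC)$. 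Once these conventions are pinned down, the three bullet points read off immediately, which is why the lemma is stated as ``evident''.
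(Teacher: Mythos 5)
Your proof is correct and is precisely the unravelling that the paper leaves implicit: the paper states the lemma as ``evident'' immediately after describing the three cases of the homomorphism $S_\phi \to S_{\phi_-}$, and your case-by-case computation of $\pi_0$ (using $\EuScript{S}_\phi = S_\phi = \bmu_2^I$, the determinant identification of $\pi_0(\Or(m,\CC))$ with $\bmu_2$, and the fact that $\Or(1,\CC)^2 \hookrightarrow \Or(2,\CC)$ induces multiplication $\bmu_2^2 \to \bmu_2$ on $\pi_0$) is exactly what is meant. Your side remark in Case (2), that $\EuScript{T}$ arises from $\pi_0$ not being left exact rather than from a kernel of $S_\phi \to S_{\phi_-}$, is a useful observation but does not change the substance.
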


\begin{definition}
	Given $\chi \in \EuScript{S}_\phi^\vee$ that is trivial on $\EuScript{T}$, we denote by $\chi_-$ the resulting character of $\EuScript{S}_{\phi_-}$.
\end{definition}

One can also view $\phi_-$ as an L-parameter of $G^{\pm}_- = \SO(V^\pm_-)$.

For each $s \in S_\phi$, we have $s^2 = 1$, so the data $\mathbf{G}^! \in \Endo_{\elli}(\tilde{G})$ and $\phi^! \in \Phi_{2, \mathrm{bdd}}(G^!)$ such that $\phi^! \mapsto \phi$ via \eqref{eqn:Phi-Endo} can be constructed as in \eqref{eqn:T}.

Given $(\rho, a) \in \mathrm{Jord}(\phi)$ as in Lemma \ref{prop:Jord-r}, we obtain $\tilde{G}_-$ and $\phi_-$; the recipe in \S\ref{sec:Jacquet-transfer} also furnishes $\mathbf{G}^!_- \in \Endo_{\elli}(\tilde{G}_-)$. Replacement of $\rho \boxtimes r(a)$ in $\phi^!$ by $\rho \boxtimes r(a-2)$ (resp.\ removal) when $a > 2$ (resp.\ $a=2$) in the corresponding $\SO$-factor of $G^!$ yields $\phi^!_- \in \Phi_{\mathrm{bdd}}(G^!_-)$, which is of good parity.

\begin{lemma}\label{prop:r-transfer-prep}
	Suppose $n > 1$. Let $(\rho, a) \in \mathrm{Jord}(\phi)$ be as in Lemma \ref{prop:Jord-r}. The datum $(\mathbf{G}^!_-, \phi^!_-)$ is the one constructed from $\phi_- \in \Phi_{\mathrm{bdd}}(\tilde{G}_-)$ and the image $s_- \in S_{\phi_-}$ of $s \in S_\phi$, by the construction surrounding \eqref{eqn:T}.
\end{lemma}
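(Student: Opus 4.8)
The assertion is a bookkeeping statement: on both sides the elliptic endoscopic datum is the conjugacy class of an involution in the relevant dual group and the parameter is obtained by restricting to its $\pm1$-eigenspaces, so it suffices to reconcile the two descriptions summand by summand. The plan is to set up notation as in \S\ref{sec:reduction-smaller}: write $\phi = \bigoplus_{i \in I}\phi_i$ with $S_\phi = \prod_{i\in I}\Or(1,\CC) = \bmu_2^I$ (recall $\phi \in \Phi_{2,\mathrm{bdd}}(\tilde{G})$, so $\mathrm{Jord}(\phi)$ is multiplicity-free), write $s = (s_i)_{i\in I}$ accordingly, and let $i_0 \in I$ be the index with $\phi_{i_0} = \rho\boxtimes r(a)$. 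Since $s$ acts by the scalar $s_i$ on $\phi_i$, the $(\pm 1)$-eigenspace of $s$ on $\mathrm{std}\circ\phi$ is $\bigoplus_{s_i = \pm 1}\phi_i$, so that $\phi^! = (\phi',\phi'')$ with $\phi' = \bigoplus_{s_i=1}\phi_i$ and $\phi'' = \bigoplus_{s_i=-1}\phi_i$. In particular $(\rho,a)$ lies in $\mathrm{Jord}(\phi')$ (resp.\ $\mathrm{Jord}(\phi'')$) precisely when $s_{i_0} = 1$ (resp.\ $s_{i_0} = -1$). Hence the recipe of \S\ref{sec:Jacquet-transfer} --- which places the $\GL(d_\rho)$-factor of $\mathbf{M}^!$, and therefore shrinks the $\SO$-factor of $G^!$, according to whether $(\rho,a)\in\mathrm{Jord}(\phi')$ or $\mathrm{Jord}(\phi'')$ --- and the recipe for $\phi^!_-$ (``in the corresponding $\SO$-factor'') both make their choice according to the single sign $s_{i_0}$.

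With this in place, the claim reduces to matching the eigenspace decomposition of the involution $s_-$ on $\mathrm{std}\circ\phi_-$ with the pair $\phi^!_-$, and I would run through the three cases of the homomorphism $S_\phi \to S_{\phi_-}$ recalled in \S\ref{sec:reduction-smaller}. In case~1 the isomorphism $S_\phi \rightiso S_{\phi_-}$ is tautological, and the only change is that the piece $\phi_{i_0}$ becomes $\rho\boxtimes r(a-2)$ with unchanged sign $s_{i_0}$, so exactly $2d_\rho$ dimensions disappear from the $s_{i_0}$-eigenspace, matching the replacement of $\rho\boxtimes r(a)$ by $\rho\boxtimes r(a-2)$ in the $\SO$-factor indexed by $s_{i_0}$. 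Case~3 is the same with ``replace $r(a)$ by $r(a-2)$'' changed to ``remove $\rho\boxtimes r(2)$'', and with the map $S_\phi \to S_{\phi_-}$ dropping the $i_0$-coordinate. In case~2 the new $\Or(2,\CC)$ factor of $S_{\phi_-}$ is the orthogonal group of the two-dimensional multiplicity space of $\rho\boxtimes r(a-2)$ in $\phi_-$, which has one line coming from each of the indices $i_0$ and $i_1$; the image of $s$ in it acts by $s_{i_0}$ on the first line and $s_{i_1}$ on the second, so the two copies of $\rho\boxtimes r(a-2)$ are distributed among the eigenspaces of $s_-$ exactly as $\phi_{i_0} = \rho\boxtimes r(a)$ and $\phi_{i_1} = \rho\boxtimes r(a-2)$ were distributed between $\phi'$ and $\phi''$, and after turning $r(a)$ into $r(a-2)$ this is precisely $\phi^!_-$. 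An elementary dimension count in each case shows that the eigenvalue multiplicities of $s_-$ are $(n'_-, n''_-)$.

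Combining these, the elliptic endoscopic datum of $\tilde{G}_-$ attached to the conjugacy class of $s_-$ has parameters $(n'_-, n''_-)$, i.e.\ it is $\mathbf{G}^!_-$, and restricting $\phi_-$ to the eigenspaces of $s_-$ through $(G^!_-)^\vee = Z_{\tilde{G}_-^\vee}(s_-)$ gives $\phi^!_-$; since elliptic endoscopic data for $\tilde{G}_-$ are rigid --- conjugacy classes of involutions in $\tilde{G}_-^\vee$ with the standard pinning --- there is nothing more to verify. I expect the only genuinely fiddly point to be case~2: one must check that the embedding $\Or(1,\CC)\times\Or(1,\CC)\hookrightarrow\Or(2,\CC)$ defining $S_\phi \to S_{\phi_-}$ matches up the two copies of $\rho\boxtimes r(a-2)$ with the correct eigenspaces, including the possibility $s_{i_0}\neq s_{i_1}$, where the image of $s$ lies in the non-identity component of $\Or(2,\CC)$; but even then the eigenvalues on the multiplicity space are $\{1,-1\}$, so the bookkeeping goes through unchanged.
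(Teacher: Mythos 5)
Your proof is correct and is just a careful unwinding of the definitions, which is exactly what the paper's (one-sentence) proof invokes: ``Clear by unraveling the constructions, especially the description of $S_\phi \to S_{\phi_-}$.'' You fill in the three cases explicitly, including the genuinely fiddly point in case~2 where $s_{i_0} \neq s_{i_1}$ puts $s_-$ in the non-identity component of the $\Or(2,\CC)$-factor, and your observation that conjugacy classes of involutions in $\Sp(2m,\CC)$ are determined by eigenvalue multiplicities correctly closes the argument.
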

\begin{proof}
	Clear by unraveling the constructions, especially the description of $S_\phi \to S_{\phi_-}$.
\end{proof}

\begin{lemma}\label{prop:r-transfer-aux}
	Suppose $n > 1$. Let $(\rho, a) \in \mathrm{Jord}(\phi)$ be as in Lemma \ref{prop:Jord-r} and $s \in S_\phi$. Define $(\mathbf{G}^!_-, \phi_-)$ by the foregoing constructions. Then $\trans_{\mathbf{G}^!_-, \tilde{G}_-} (S\Theta^{G^!_-}_{\phi^!_-})$ depends only on the image $x_-$ of $s$ in $\EuScript{S}_{\phi_-}$.
\end{lemma}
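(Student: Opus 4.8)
The plan is to express $\trans_{\mathbf{G}^!_-, \tilde{G}_-}(S\Theta^{G^!_-}_{\phi^!_-})$ in terms of the distribution $T_{\phi_-, s_-}$ of \eqref{eqn:T}, and then to invoke the metaplectic ECR (Theorem \ref{prop:Luo-endo}) together with a short computation of one $\epsilon$-factor.

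First I would recall that $\phi_- \in \Phi_{\mathrm{bdd}}(\tilde{G}_-)$ is of good parity, that $s_-$, the image of $s$ under the canonical homomorphism $S_\phi \to S_{\phi_-}$, satisfies $s_-^2 = 1$, and that $x_-$ is the image of $s_-$ in $\EuScript{S}_{\phi_-}$. By Lemma \ref{prop:r-transfer-prep}, the pair $(\mathbf{G}^!_-, \phi^!_-)$ is exactly the one attached to $(\phi_-, s_-)$ by the construction surrounding \eqref{eqn:T}. Hence that definition reads
\[ T_{\phi_-, s_-} = \epsilon\bigl(\phi_-^{s_- = -1}\bigr)\, \trans_{\mathbf{G}^!_-, \tilde{G}_-}\bigl(S\Theta^{G^!_-}_{\phi^!_-}\bigr), \]
and since $\phi_-^{s_- = -1}$ is self-dual of symplectic type, so that $\epsilon(\phi_-^{s_- = -1}) \in \bmu_2$, we may rearrange to
\[ \trans_{\mathbf{G}^!_-, \tilde{G}_-}\bigl(S\Theta^{G^!_-}_{\phi^!_-}\bigr) = \epsilon\bigl(\phi_-^{s_- = -1}\bigr)\, T_{\phi_-, s_-}. \]
Now Theorem \ref{prop:Luo-endo}, applied to $\phi_- \in \Phi_{\mathrm{bdd}}(\tilde{G}_-)$ and $s_-$, shows that $T_{\phi_-, s_-}$ depends only on $x_-$. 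Here it matters that the metaplectic ECR is available for \emph{all} bounded parameters, since $\phi_-$ need not lie in $\Phi_{2,\mathrm{bdd}}(\tilde{G}_-)$.

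It then remains to check that the factor $\epsilon(\phi_-^{s_- = -1})$ also depends only on $x_-$. Decompose $\phi_- = \bigoplus_i m_i \phi_{-,i}$ into pairwise non-isomorphic simple summands. As $\phi \in \Phi_{2,\mathrm{bdd}}(\tilde{G})$ is multiplicity-free and $\phi_-$ arises from $\phi$ by replacing $\rho \boxtimes r(a)$ with $\rho \boxtimes r(a-2)$ (or by deleting $\rho \boxtimes r(2)$ when $a = 2$), every $m_i$ lies in $\{1, 2\}$ and $S_{\phi_-} \simeq \prod_i \Or(m_i, \CC)$, with $s_-$ corresponding to a tuple $(s_{-,i})_i$ where $s_{-,i} \in \Or(m_i, \CC)$ and $s_{-,i}^2 = 1$. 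Writing $d_i := \dim_\CC (\CC^{m_i})^{s_{-,i} = -1}$, one has $\phi_-^{s_- = -1} \simeq \bigoplus_i d_i \phi_{-,i}$, hence $\epsilon(\phi_-^{s_- = -1}) = \prod_i \epsilon(\phi_{-,i})^{d_i}$. Since each $\epsilon(\phi_{-,i}) \in \bmu_2$ (as $\phi_{-,i}$ is self-dual of symplectic type), only $d_i \bmod 2$ matters, and $d_i \equiv \tfrac{1}{2}(1 - \det s_{-,i}) \pmod 2$; thus the exponent is governed by the class of $s_{-,i}$ in $\pi_0(\Or(m_i, \CC)) = \bmu_2$, i.e.\ by $x_{-,i}$. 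Therefore $\epsilon(\phi_-^{s_- = -1})$ depends only on $x_-$, and combining with the previous paragraph completes the proof.

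There is no serious obstacle here once Lemma \ref{prop:r-transfer-prep} is in hand: the substance is the identification with $T_{\phi_-, s_-}$ via \eqref{eqn:T}, and the cancellation $\epsilon(\phi_{-,i})^{2} = 1$, which is precisely what lets the $\epsilon$-factor descend to $\EuScript{S}_{\phi_-}$ in the multiplicity-$2$ case. The one point to keep an eye on is that $\phi_-$ is merely bounded of good parity, not square-integrable, so Theorem \ref{prop:Luo-endo} must be used in its stated generality.
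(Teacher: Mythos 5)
Your proof is correct, and it takes a genuinely different (and in fact shorter) route than the paper's. The paper proves the invariance by case analysis on $\EuScript{T}$, the kernel of $\EuScript{S}_\phi \twoheadrightarrow \EuScript{S}_{\phi_-}$: it checks that translating $s$ by elements of $\EuScript{T}$ has no effect on $\trans_{\mathbf{G}^!_-,\tilde{G}_-}(S\Theta^{G^!_-}_{\phi^!_-})$, which requires distinguishing the subcase where $s$ and its translate $s'$ produce the \emph{same} pair $(\mathbf{G}^!_-,\phi^!_-)$ from the subcase where they produce genuinely different pairs; in the latter subcase the paper invokes the sign $\omega_\tau(-1)$ arising from commuting parabolic induction and transfer (a metaplectic feature, cited from \cite[Theorem 3.4.6]{Li19}) and uses that this sign is $1$ since $\rho\boxtimes r(a-2)$ is symplectic. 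Your argument instead rewrites $\trans_{\mathbf{G}^!_-,\tilde{G}_-}(S\Theta^{G^!_-}_{\phi^!_-})=\epsilon(\phi_-^{s_-=-1})\,T_{\phi_-,s_-}$ (valid by Lemma \ref{prop:r-transfer-prep} and the definition \eqref{eqn:T}, using that $\epsilon \in \bmu_2$), outsources the ``depends only on $x_-$'' statement for $T_{\phi_-,s_-}$ to the stated generality of Theorem \ref{prop:Luo-endo}, and then checks that the extra $\epsilon$-factor also descends to $\EuScript{S}_{\phi_-}$ by observing that $d_i\bmod 2$ is read off from $\det s_{-,i}=x_{-,i}$ and $\epsilon(\phi_{-,i})\in\bmu_2$. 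This is clean and your $\epsilon$-factor computation is precisely the good-parity identity $\epsilon(\phi_-^{s_-=-1})=\nu_{\phi_-}(x_-)$ that the paper quotes from \cite[Proposition 4.3.4]{Li24a} in the very next lemma; the only cost is that the invariance of $T_{\phi_-,s_-}$ becomes a genuine application of Luo's ECR rather than a direct unwinding, so the mechanism responsible for the $(-1)$-cancellation (the $\omega_\tau(-1)$ in commuting induction and transfer) is hidden inside the cited theorem rather than made explicit. Both arguments lean equally on Lemma \ref{prop:r-transfer-prep}, and you correctly flagged the one sensitive point, namely that $\phi_-$ is merely bounded of good parity, so Theorem \ref{prop:Luo-endo} must indeed be usable for arbitrary bounded parameters.
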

\begin{proof}
	Recall that the kernel $\EuScript{T}$ of $S_\phi \twoheadrightarrow \EuScript{S}_{\phi_-}$ is non-trivial only when (i) $a=2$, or (ii) $a > 2$ and $(\rho, a-2) \in \mathrm{Jord}(\phi)$.

	In the case (i), $\phi_- \in \Phi_{2, \mathrm{bdd}}(\tilde{G}_-)$ and $\EuScript{T}$ is the summand $\bmu_2$ indexed by $(\rho, 2)$. Translation of $s$ by $\EuScript{T}$ has no effect on $(\mathbf{G}^!_-, \phi^!_-)$ since $\EuScript{T} = \Ker[S_\phi \to S_{\phi_-}]$, in view of Lemma \ref{prop:r-transfer-prep}.
		
	Consider the case (ii), $\phi_- \notin \Phi_{2, \mathrm{bdd}}(\tilde{G}_-)$ since it contains $2 \rho \boxtimes r(a-2)$, and $\EuScript{T}$ is the diagonal of the $\bmu_2 \times \bmu_2$ indexed by $(\rho, a)$ and $(\rho, a-2)$. Denote by $s'$ the translation of $s$ by $(-1, -1) \in \EuScript{T}$.
	\begin{itemize}
		\item Suppose the $\bmu_2 \times \bmu_2$-part of $s$ is $(1, -1)$. Then $s$ and $s'$ give rise to the same $(\mathbf{G}^!_-, \phi^!_-)$: each $\SO$-factor of $G^!_-$ contributes a $(\rho, a-2)$ to $\mathrm{Jord}(\phi_-)$.
		
		\item Suppose the $\bmu_2 \times \bmu_2$-part from $s$ is $(1, 1)$. Removing $\rho \boxtimes r(a)$ and $\rho \boxtimes r(a-2)$ from $\phi$ yields an L-parameter $\underline{\phi}$ for some $\tilde{\underline{G}} = \Mp(\underline{W})$. Removing the $\bmu_2 \times \bmu_2$-part of $s$ or $s'$ yields the same $\underline{s} \in S_{\underline{\phi}}$. To $(\underline{\phi}, \underline{s})$ is attached $\mathbf{\underline{G}}^! \in \Endo_{\elli}(\tilde{\underline{G}})$ and $\underline{\phi}^! \in \Phi_{2, \mathrm{bdd}}(\underline{G}^!)$ such that $\underline{\phi}^! \mapsto \underline{\phi}$ via \eqref{eqn:Phi-Endo} for $\tilde{\underline{G}}$.
			
		Note that $\underline{M} := \underline{G} \times \GL(a-2)$ is a Levi subgroup of $G_-$. We deduce the datum $\mathbf{\underline{M}}^! \in \Endo_{\elli}(\tilde{\underline{M}})$ from $\mathbf{\underline{G}}^!$, together with the L-parameter $\underline{\phi}^! \oplus (2 \rho \boxtimes r(a-2))$ for $\underline{M}^!$. There are exactly two ways to lift $(\mathbf{\underline{M}}^!, \underline{\phi}^!)$ to the level $\tilde{G}_-$, namely by embedding $\GL(a-2)$ into $\SO(2n'_- + 1)$ or $\SO(2n''_- + 1)$ where $(n'_-, n''_-)$ describes the desired element of $\Endo_{\elli}(\tilde{G}_-)$. These two liftings correspond to the $(\mathbf{G}^!_-, \phi^!_-)$ attached to $s$ and $s'$, respectively.
			
		Let $\tau$ be the irreducible representation of $\GL(a-2, F)$ parameterized by $\rho \boxtimes r(a-2)$. Due to the appearance of $-1$ when commuting induction and transfer (this is a metaplectic feature --- see \cite[Theorem 3.4.6]{Li19}), the $\trans_{\mathbf{G}^!_-, \tilde{G}_-}(S\Theta^{G^!_-}_{\phi^!_-})$ attached to $s$ and $s'$ differ by $\omega_\tau(-1)$. Since $\phi$ is of good parity, $\rho \boxtimes r(a-2)$ is self-dual of symplectic type and we get $\omega_\tau(-1) = 1$.
	\end{itemize}
	
	In each situation, we see $\trans_{\mathbf{G}^!_-, \tilde{G}_-} (S\Theta^{G^!_-}_{\phi^!_-})$ depends only on $x_-$, as desired.
\end{proof}

\subsection{Completion of the proof}\label{sec:completion-proof}
Retain the assumptions in \S\ref{sec:reduction-smaller}. In view of Lemma \ref{prop:phi-tempered}, we may write $\pi = \pi_{\phi, \chi}$ and $\sigma = \sigma_{\phi, \chi^\circ}$ for some $\chi, \chi^\circ \in \EuScript{S}_\phi^\vee$.

\begin{lemma}\label{prop:r-transfer}
	Suppose $n > 1$. Let $(\rho, a) \in \mathrm{Jord}(\phi)$ be as in Lemma \ref{prop:Jord-r}, then:
	\begin{enumerate}[(i)]
		\item $\chi^\circ$ is trivial on $\EuScript{T}$ and we have
		\[ r^{G^{\pm}}_{\rho, (a-1)/2} \left( \sigma_{\phi, \chi^\circ} \right) = \sigma_{\phi_-, \chi^\circ_-} \]
		in $\mathrm{Groth}(G^{\pm}_-)$;
		\item $\chi \nu_\phi$ is trivial on $\EuScript{T}$ and we have
		\[ r^{\tilde{G}}_{\rho, (a-1)/2} \left( \pi_{\phi, \chi} \right) = \pi_{\phi_-, (\chi\nu_\phi)_- \nu_{\phi_-}} \]
		in $\mathrm{Groth}(\tilde{G}_-)$.
	\end{enumerate}
\end{lemma}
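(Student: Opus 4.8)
The plan is to settle (i) by quoting Moeglin's explicit computation of Jacquet modules of discrete series of odd special orthogonal groups, and to derive (ii) from the corresponding statement on the endoscopic groups $G^!$ by means of the metaplectic ECR (Theorem \ref{prop:Luo-endo}) and Proposition \ref{prop:alpha}.

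For (i): by Lemma \ref{prop:phi-tempered} and the fact that $\mathrm{TW}^*$ preserves square-integrability, $\sigma = \sigma_{\phi, \chi^\circ}$ is a discrete series of $G^{\pm}(F)$ with parameter $\phi$, and the pair $(\rho, a) \in \mathrm{Jord}(\phi)$ furnished by Lemma \ref{prop:Jord-r} has $\rho$ unramified (so $d_\rho = 1$) with $r^{G^{\pm}}_{\rho, (a-1)/2}(\sigma) \neq 0$. Moeglin's algorithm for $r_{\rho, x}$ on discrete series --- as recorded in \cite[\S 8.3.4]{Rel18} and \cite[(6.3)]{Xu17} for split odd orthogonal groups, and, for the non-split $G^-$, combined with Ishimoto's classification \cite{Is24} --- asserts exactly that this non-vanishing forces $\chi^\circ$ to be trivial on $\EuScript{T}$ (so that $\chi^\circ_-$ is defined, compatibly with Lemma \ref{prop:S-minus}) and that the resulting piece equals $\sigma_{\phi_-, \chi^\circ_-}$. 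This proves (i). Applying the same statement to a product of \emph{split} odd orthogonal groups and summing over the packet yields the stable descent identity
\[
	r^{G^!}_{\rho, (a-1)/2}\big( S\Theta^{G^!}_{\phi^!} \big) = S\Theta^{G^!_-}_{\phi^!_-},
\]
which we use below; here one uses that the blocks of $\phi^!$ carrying $(\rho, a)$ lie in a single $\SO$-factor, and that as $\chi^!$ runs over the characters trivial on $\EuScript{T}^!$ the descended characters $\chi^!_-$ run over all of $\EuScript{S}_{\phi^!_-}^\vee$.

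For (ii): fix $s \in S_\phi$ with image $x \in \EuScript{S}_\phi$, and form the associated $\mathbf{G}^! \in \Endo_{\elli}(\tilde{G})$ and $\phi^! \in \Phi_{2, \mathrm{bdd}}(G^!)$ with $\phi^! \mapsto \phi$. Since $\phi$ is multiplicity-free of good parity, $\epsilon(\phi^{s=-1}) = \prod_{i:\, x_i = -1} \epsilon(\tfrac12, \phi_i, \bpsi) = \nu_\phi(x)$ by \eqref{eqn:epsilon-factor} and Definition \ref{def:nu-phi}, so the ECR for $\tilde{G}$ reads $\nu_\phi(x)\, \trans_{\mathbf{G}^!, \tilde{G}}(S\Theta^{G^!}_{\phi^!}) = \sum_{\chi} \chi(x)\, \Tr(\pi_{\phi, \chi})$. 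I apply $r^{\tilde{G}}_{\rho, (a-1)/2}$ to both sides. On the left, $\rho$ unramified gives $\omega_\rho(-1) = 1$, hence $\alpha = 1$ in Proposition \ref{prop:alpha}, so the left side becomes $\nu_\phi(x)\, \trans_{\mathbf{G}^!_-, \tilde{G}_-}\big( r^{G^!}_{\rho, (a-1)/2}(S\Theta^{G^!}_{\phi^!}) \big) = \nu_\phi(x)\, \trans_{\mathbf{G}^!_-, \tilde{G}_-}(S\Theta^{G^!_-}_{\phi^!_-})$ by the identity just displayed; by Lemmas \ref{prop:r-transfer-prep} and \ref{prop:r-transfer-aux} this is the stable object attached to the image $s_- \in S_{\phi_-}$ entering the ECR for $\tilde{G}_-$, and since $\epsilon(\phi_-^{s_- = -1}) = \nu_{\phi_-}(x_-)$ that ECR gives $\trans_{\mathbf{G}^!_-, \tilde{G}_-}(S\Theta^{G^!_-}_{\phi^!_-}) = \nu_{\phi_-}(x_-) \sum_{\chi_-} \chi_-(x_-)\, \Tr(\pi_{\phi_-, \chi_-})$. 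Hence for every $x \in \EuScript{S}_\phi$,
\[
	\sum_{\chi} \chi(x)\, r^{\tilde{G}}_{\rho, (a-1)/2}\big( \Tr\, \pi_{\phi, \chi} \big) = \nu_\phi(x)\, \nu_{\phi_-}(x_-) \sum_{\chi_-} \chi_-(x_-)\, \Tr\big( \pi_{\phi_-, \chi_-} \big),
\]
with $x_- = q(x)$ for the surjection $q : \EuScript{S}_\phi \twoheadrightarrow \EuScript{S}_{\phi_-}$ of Lemma \ref{prop:S-minus}.

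Fourier inversion over the finite abelian group $\EuScript{S}_\phi$, together with $\nu_\phi = \nu_\phi^{-1}$, $\nu_{\phi_-} = \nu_{\phi_-}^{-1}$ and $x_- = q(x)$, shows that the coefficient of $\Tr(\pi_{\phi_-, \chi_-})$ in $r^{\tilde{G}}_{\rho, (a-1)/2}(\Tr\, \pi_{\phi, \chi})$ is $1$ precisely when $q^\vee(\nu_{\phi_-} \chi_-) = \chi\nu_\phi$ and $0$ otherwise; as $q^\vee$ is injective with image the characters trivial on $\EuScript{T} = \ker q$, this gives $r^{\tilde{G}}_{\rho, (a-1)/2}(\Tr\, \pi_{\phi, \chi}) = \Tr(\pi_{\phi_-,\, (\chi\nu_\phi)_- \nu_{\phi_-}})$ when $\chi\nu_\phi$ is trivial on $\EuScript{T}$, and $0$ otherwise. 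Specializing to our $\pi = \pi_{\phi, \chi}$ and invoking $r^{\tilde{G}}_{\rho, (a-1)/2}(\pi) \neq 0$ from Lemma \ref{prop:Jord-r}(i) forces $\chi\nu_\phi$ to be trivial on $\EuScript{T}$ and yields (ii). The step I expect to cost the most care is the stable descent identity $r^{G^!}_{\rho, (a-1)/2}(S\Theta^{G^!}_{\phi^!}) = S\Theta^{G^!_-}_{\phi^!_-}$: one has to check that the packet members of $\phi^!$ on which Moeglin's algorithm produces a vanishing or differently-parametrized Jacquet module are exactly those whose character is not trivial on $\EuScript{T}^!$, so that the surviving terms recombine into the full stable character of $\phi^!_-$; keeping the identifications $\EuScript{S}_\phi \twoheadrightarrow \EuScript{S}_{\phi_-}$ and $\EuScript{S}_{\phi^!} \twoheadrightarrow \EuScript{S}_{\phi^!_-}$ and the two $\epsilon$-factors mutually consistent is the remaining bookkeeping, while the $\alpha = 1$ simplification and the Fourier inversion are routine.
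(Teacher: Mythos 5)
Your proposal is correct and follows essentially the same route as the paper's proof: apply $r^{\tilde{G}}_{\rho,(a-1)/2}$ to the ECR for $\tilde{G}$ (Theorem \ref{prop:Luo-endo}), commute the Jacquet functor with transfer via Proposition \ref{prop:alpha} (with $\alpha = \mathbf{1}$ since $\rho$ is unramified), invoke Moeglin's stable descent $r^{G^!}_{\rho,(a-1)/2}(S\Theta^{G^!}_{\phi^!}) = S\Theta^{G^!_-}_{\phi^!_-}$ for the split odd orthogonal groups $G^!$, use Lemmas \ref{prop:r-transfer-prep}--\ref{prop:r-transfer-aux} and the identity $\epsilon(\phi_-^{s_- = -1}) = \nu_{\phi_-}(x_-)$ to recast the result through the ECR for $\tilde{G}_-$, and conclude by Fourier inversion over $\EuScript{S}_\phi$. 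The only differences are cosmetic — you apply the Jacquet functor before Fourier inverting, whereas the paper Fourier-inverts first; and you present the stable descent as a consequence of (i) summed over packets rather than simply citing Moeglin's computation directly (minor caveat: the cleanest justification is to quote \cite[\S 8.3.4]{Rel18} and \cite[(6.3)]{Xu17} for the full L-packets of $G^!$ as the paper does, since (i) as stated only covers the $\sigma$ in the image of $\mathrm{TW}^*$).
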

\begin{proof}
	The assertion (i) is contained in the proof of \cite[\S 8.3.4]{Rel18}. Below we sketch a variant of \textit{loc.\ cit.}\ for (ii).
	
	After a Fourier inversion on $\EuScript{S}_\phi = S_\phi$, Theorem \ref{prop:Luo-endo} reads
	\begin{equation}\label{eqn:r-transfer-0}
		\Tr\left( \pi_{\phi, \chi}\right) = |S_\phi|^{-1} \sum_{s \in S_\phi} (\chi \nu_\phi)(s) \trans_{\mathbf{G}^!, \tilde{G}} (S\Theta^{G^!}_{\phi^!}),
	\end{equation}
	where $(\mathbf{G}^!, \phi^!)$ is determined from $(\phi, s)$. Note that $\phi^! = (\phi', \phi'') \in \Phi_{2, \mathrm{bdd}}(G^!)$.
	
	Apply $r^{\tilde{G}}_{\rho, (a-1)/2}$ to \eqref{eqn:r-transfer-0}. By the Proposition \ref{prop:alpha}, the right-hand side becomes
	\begin{equation*}
		|S_\phi|^{-1} \sum_{s \in S_\phi} (\chi \nu_\phi)(s) \alpha(s) \trans_{\mathbf{G}^!_-, \tilde{G}_-} r^{G^!}_{\rho, \frac{a-1}{2}}(S\Theta^{G^!}_{\phi^!}),
	\end{equation*}
	where
	\begin{equation*}
		\alpha(s) := \begin{cases}
			1, & (\rho, a) \in \mathrm{Jord}(\phi'), \\
			\omega_\rho(-1), & (\rho, a) \in \mathrm{Jord}(\phi'').
		\end{cases}
	\end{equation*}
	In general, $\alpha \in S_\phi^\vee$. Since $\rho$ is unramified here, $\alpha = \mathbf{1}$.
	
	Construct $(\mathbf{G}^!_-, \phi^!_-)$ from $(\rho, a) \in \mathrm{Jord}(\phi)$ and $s$; see Lemma \ref{prop:r-transfer-prep}. Applying Moeglin's results \cite[below (8.3)]{Rel18} for split odd orthogonal groups, we get
	\[ r^{\tilde{G}}_{\rho, \frac{a-1}{2}} (\pi_{\phi, \chi}) = |S_\phi|^{-1} \sum_{s \in S_\phi} (\chi\nu_\phi)(s) \trans_{\mathbf{G}^!_-, \tilde{G}_-} ( S\Theta^{G^!_-}_{\phi^!_-} ). \]

	For every $s \in S_\phi$, denote by $s_-$ (resp.\ $x_-$) its image in $S_{\phi_-}$ (resp.\ $\EuScript{S}_{\phi_-}$). Inside the sum, $(\mathbf{G}^!_-, \phi^!_-)$ is determined by $(\phi_-, s_-)$ (Lemma \ref{prop:r-transfer-prep}). However, Lemma \ref{prop:r-transfer-aux} implies $\trans_{\mathbf{G}^!_-, \tilde{G}_-} ( S\Theta^{G^!_-}_{\phi^!_-} )$ depends only on $(\phi_-, x_-)$. Since $r^{\tilde{G}}_{\rho, (a-1)/2} (\pi_{\phi, \chi}) \neq 0$ (Lemma \ref{prop:Jord-r}), the sum is nonzero. It follows that $\chi \nu_\phi$ must be trivial on $\EuScript{T}$. Now \eqref{eqn:r-transfer-0} can be rewritten as
	\begin{equation*}
		r_{\rho, \frac{a-1}{2}} (\pi_{\phi, \chi}) = |\EuScript{S}_{\phi_-}|^{-1} \sum_{x_- \in \EuScript{S}_{\phi_-}} (\chi \nu_\phi)_-(x_-) \trans_{\mathbf{G}^!_-, \tilde{G}_-} (S\Theta^{G^!_-}_{\phi^!_-}).
	\end{equation*}
	The $(\mathbf{G}^!_-, \phi^!_-)$ in the sum above is defined by picking a preimage $s_- \in S_{\phi_-}$ with $s_-^2 = 1$ for each $x_-$. By \eqref{eqn:T} and Theorem \ref{prop:Luo-endo}, one may write $T_{\phi_-, s_-} = \epsilon\left( \phi_-^{s_- = -1} \right) \trans_{\mathbf{G}^!_-, \tilde{G}_-} (S\Theta^{G^!_-}_{\phi^!_-})$ as $T_{\phi_-, x_-}$ since it depends only on $(\phi_-, x_-)$.
	
	On the other hand, as $\phi_-$ is of good parity, one can check that $\epsilon\left( \phi_-^{s_- = -1} \right) = \nu_{\phi_-}(x_-)$; see \cite[Proposition 4.3.4]{Li24a} for details. Hence by applying Theorem \ref{prop:Luo-endo} to $\tilde{G}_-$ and $\phi_-$, we reach
	\begin{align*}
		r_{\rho, \frac{a-1}{2}} (\pi_{\phi, \chi^\circ}) & = |\EuScript{S}_{\phi_-}|^{-1} \sum_{x_- \in \EuScript{S}_{\phi_-}} (\chi\nu_\phi)_-(x_-) \nu_{\phi_-}(x_-) T_{\phi_-, x_-} \\
		& = \Tr \left( \pi_{\phi_-, (\chi \nu_\phi)_- \nu_{\phi_-}} \right)
	\end{align*}
	by another Fourier inversion. This is the desired equality in $\mathrm{Groth}(\tilde{G}_-)$.
\end{proof}

The proofs of the main Theorem \ref{prop:main} and Proposition \ref{prop:phi-nr} can now be completed as follows.

\begin{lemma}\label{prop:to-Jacquet}
	If the statement $(\phi^\circ, \chi^\circ) = (\phi, \chi\nu_\phi)$ in Theorem \ref{prop:main} holds for all tempered irreducible genuine representations of metaplectic groups of rank $< n$, then it holds for all irreducible genuine square-integrable representations $\pi$ of $\tilde{G}$. Here the representations are assumed to belong to $\mathcal{G}_{\bpsi}^{\pm}$ or its avatars.
\end{lemma}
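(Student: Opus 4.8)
The plan is to run the induction on $n$ that underlies all of \S\S\ref{sec:basic}--\ref{sec:Jacquet}. The base case $n=1$ is Lemma~\ref{prop:basic-case}, so assume $n>1$ and let $\pi = \pi_{\phi,\chi} \in \Pi_{2,-}(\tilde{G})$ lie in $\mathcal{G}_\bpsi^\pm$, with $\sigma := \mathrm{TW}^*(\pi) = \sigma_{\phi,\chi^\circ}$; by Lemma~\ref{prop:phi-tempered} the L-parameter $\phi \in \Phi_{2,\mathrm{bdd}}(\tilde{G})$ is common to $\pi$ and $\sigma$, and in particular $\phi$ is of good parity and multiplicity-free. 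The first step is to invoke Lemma~\ref{prop:Jord-r} to produce $(\rho,a) \in \mathrm{Jord}(\phi)$ with $\rho$ unramified (hence $d_\rho = 1$), $a \geq 2$, and $r^{\tilde{G}}_{\rho,(a-1)/2}(\pi) \neq 0$, $r^{G^\pm}_{\rho,(a-1)/2}(\sigma) \neq 0$, together with parabolics $P \leftrightarrow P^\pm$ whose Levi factors are $\tilde{M} \simeq \Mp(W_-) \times \GL(1,F)$ and $M^\pm \simeq \SO(V^\pm_-) \times \GL(1)$.

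Next I would transport the refined Jacquet functors across the Hecke algebra correspondence. By Theorem~\ref{prop:compatibility-1}, $\mathrm{TW}^{\tilde{M},*}$ intertwines $r_{\tilde{P}}$ and $r_{P^\pm}$; since $\mathrm{TW}^{\tilde{M}} = \mathrm{TW}^\flat \otimes \identity_{H_1}$ by \eqref{eqn:TW-M} leaves the $\GL(1)$-factor untouched and $\rho|\cdot|^{(a-1)/2}$ is an unramified character of $F^\times = \GL(1,F)$, extracting the summand whose $\GL(1)$-component equals $\rho|\cdot|^{(a-1)/2}$ commutes with $\mathrm{TW}^{\flat,*}$. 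Hence, as an identity in Grothendieck groups,
\[ \mathrm{TW}^{\flat,*}\!\left( r^{\tilde{G}}_{\rho,(a-1)/2}(\pi) \right) = r^{G^\pm}_{\rho,(a-1)/2}(\sigma), \]
where both sides lie in the avatar of $\mathcal{G}_\bpsi^\pm$, resp.\ of $\mathcal{G}^\pm$, attached to the metaplectic, resp.\ odd orthogonal, group of rank $n-1$. Now I would feed in Lemma~\ref{prop:r-transfer}: the left-hand side equals $\pi_{\phi_-,\,(\chi\nu_\phi)_- \nu_{\phi_-}}$ (so $\chi\nu_\phi$ is trivial on $\EuScript{T}$) and the right-hand side equals $\sigma_{\phi_-,\,\chi^\circ_-}$ (so $\chi^\circ$ is trivial on $\EuScript{T}$), where $\phi_- \in \Phi_{\mathrm{bdd}}(\tilde{G}_-)$ is of good parity, hence $\pi_{\phi_-,\,\cdot}$ is tempered.

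To conclude, apply the induction hypothesis to the tempered representation $\pi^\flat := \pi_{\phi_-,\,(\chi\nu_\phi)_- \nu_{\phi_-}}$ of the rank-$(n-1)$ metaplectic group $\tilde{G}_-$ (which lies in the avatar of $\mathcal{G}_\bpsi^\pm$, being an irreducible constituent of a Jacquet module of $\pi$): it yields $\mathrm{TW}^{\flat,*}(\pi^\flat) = \sigma_{\phi_-,\, (\chi\nu_\phi)_- \nu_{\phi_-}\cdot\nu_{\phi_-}} = \sigma_{\phi_-,\,(\chi\nu_\phi)_-}$, using $\nu_{\phi_-}^2 = 1$. Comparing with the displayed identity gives $\sigma_{\phi_-,\,(\chi\nu_\phi)_-} = \sigma_{\phi_-,\,\chi^\circ_-}$, hence $(\chi\nu_\phi)_- = \chi^\circ_-$ in $\EuScript{S}_{\phi_-}^\vee$. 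Since $\EuScript{S}_{\phi_-} = \EuScript{S}_\phi / \EuScript{T}$ by Lemma~\ref{prop:S-minus} and both $\chi\nu_\phi$ and $\chi^\circ$ are trivial on $\EuScript{T}$, pulling back yields $\chi^\circ = \chi\nu_\phi$ on all of $\EuScript{S}_\phi$; together with $\phi^\circ = \phi$ this is exactly Theorem~\ref{prop:main} for $\pi$. The same inductive passage also settles Proposition~\ref{prop:phi-nr} in the square-integrable case: since $\rho$ is unramified, $\phi_-$ differs from $\phi$ only by blocks $\rho\boxtimes r(\cdot)$ that are trivial on $I_F$, so triviality of $\phi_-|_{I_F}$ propagates to $\phi|_{I_F}$, the base case $n=1$ being read off from Remark~\ref{rem:basic-L}.

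I expect the main obstacle to be bookkeeping rather than substance. The genuinely hard content --- the interaction of stable tempered characters and endoscopic transfers with Jacquet modules, and the point where the metaplectic root number $\nu_{\phi_-}$ enters --- has already been isolated in Lemma~\ref{prop:r-transfer} via Luo's ECR and the adaptation of Moeglin's arguments. What remains to verify carefully is: (a) that $r^{\tilde{G}}_{\rho,x}$ and $r^{G^\pm}_{\rho,x}$ genuinely correspond under $\mathrm{TW}^*$ at the level of Grothendieck groups, which relies on the block-preservation statement of Theorem~\ref{prop:compatibility-1} and the product shape of $\mathrm{TW}^{\tilde{M}}$; (b) that $\pi^\flat$ indeed belongs to the avatar of $\mathcal{G}_\bpsi^\pm$ (of the correct sign) so the induction hypothesis is applicable; and (c) the short exact sequence $1 \to \EuScript{T} \to \EuScript{S}_\phi \to \EuScript{S}_{\phi_-} \to 1$, which is what lets one upgrade the equality of characters on $\EuScript{S}_{\phi_-}$ to an equality on $\EuScript{S}_\phi$ given the vanishing on $\EuScript{T}$ provided by Lemma~\ref{prop:r-transfer}.
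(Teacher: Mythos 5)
Your proposal matches the paper's own proof step by step: choose $(\rho,a)$ via Lemma~\ref{prop:Jord-r}, transport the refined Jacquet functor $r_{\rho,(a-1)/2}$ across $\mathrm{TW}^*$ using Theorem~\ref{prop:compatibility-1}, identify both sides via Lemma~\ref{prop:r-transfer}, apply the inductive hypothesis to the rank-$(n-1)$ representation to cancel $\nu_{\phi_-}$, and then lift the resulting identity $(\chi\nu_\phi)_- = \chi^\circ_-$ to $\EuScript{S}_\phi$ using $\EuScript{T}$-triviality and Lemma~\ref{prop:S-minus}. The argument is correct and differs from the paper's only in that you are slightly more explicit about a couple of bookkeeping points (the product shape of $\mathrm{TW}^{\tilde M}$ and why the Jacquet-module constituent stays in the correct block).
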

\begin{proof}
	In view of Lemma \ref{prop:basic-case}, we may assume $n > 1$. Choose $(\rho, a) \in \mathrm{Jord}(\phi)$ as in Lemma \ref{prop:Jord-r}. Both $r^{G^{\pm}}_{\rho, (a-1)/2}( \sigma_{\phi, \chi^\circ})$ and $r^{\tilde{G}}_{\rho, (a-1)/2}( \pi_{\phi, \chi})$ are irreducible by Lemma \ref{prop:r-transfer}. On the other hand, they belong to the avatars of $\mathcal{G}^{\pm}$ and $\mathcal{G}^{\pm}_\bpsi$ for groups of the same type with smaller ranks, respectively. Denote by $\mathrm{TW}_-$ the avatar of $\mathrm{TW}$ for $\tilde{G}_-$. By Theorem \ref{prop:compatibility-1},
	\[ r^{G^{\pm}}_{\rho, (a-1)/2}( \sigma_{\phi, \chi^\circ}) = \mathrm{TW}_-^* \left( r^{\tilde{G}}_{\rho, (a-1)/2}( \pi_{\phi, \chi})\right) \]
	in the Iwahori--spherical part of $\mathrm{Groth}(G^{\pm}_-)$. Hence Lemma \ref{prop:r-transfer} implies
	\[ \sigma_{\phi_-, \chi^\circ_-} \simeq \mathrm{TW}_-^* \left( \pi_{\phi_-, (\chi\nu_\phi)_- \nu_{\phi_-}} \right). \]
	
	By assumption, we get
	\[ (\chi \nu_\phi)_- \nu_{\phi_-} = \chi^\circ_- \nu_{\phi_-}. \]
	This implies $(\chi \nu_\phi)_- = \chi^\circ_-$, which in turn gives $\chi \nu_\phi = \chi^\circ$ as desired.
\end{proof}

\begin{proof}[Proof of Proposition \ref{prop:phi-nr}]
	Let $\phi \in \Phi(\tilde{G})$ be the L-parameter of $\pi$. The assertion is equivalent to that $\rho$ is unramified for every $(\rho, a) \in \mathrm{Jord}(\phi)$. Upon recalling the proofs of Lemmas \ref{prop:phi-tempered} and \ref{prop:to-tempered}, it suffices to treat square-integrable $\pi$.
	
	For the basic case $n = 1$, we have seen in Remark \ref{rem:basic-L} that $(\rho, a) = (\xi, 2)$ for some unramified quadratic character $\xi$ of $F^{\times}$, in both the $+$ and $-$ cases.
	
	Now suppose $n > 1$. In Lemma \ref{prop:r-transfer} (ii) we picked $(\rho, a) \in \mathrm{Jord}(\phi)$ with $\rho$ unramified, $a \geq 2$ , and produced $\phi_- \in \Phi_{\mathrm{bdd}}(\tilde{G}_-)$ such that
	\begin{itemize}
		\item $\tilde{G}_-$ is a metaplectic group of rank $n-1$;
		\item the multi-set $\mathrm{Jord}(\phi_-)$ is obtained from $\mathrm{Jord}(\phi)$ by either removing $(\rho, 2)$ (when $a = 2$) or replacing $(\rho, a)$ by $(\rho, a-2)$ (when $a > 2$);
		\item if we put $\eta := (\chi\nu_\phi)_- \nu_{\phi_-} \in \EuScript{S}_{\phi_-}^\vee$, then $\pi_{\phi_-, \eta}$ lies in the avatar of $\mathcal{G}_{\bpsi}^{\pm}$ for $\tilde{G}_-$.
	\end{itemize}
	
	It follows from induction that $\rho'$ is unramified for all $(\rho', a') \in \mathrm{Jord}(\phi_-)$, hence the same holds for $\mathrm{Jord}(\phi)$.
\end{proof}

The argument above can be compared with Xu's algorithm for cuspidal supports, see \cite[Proposition 8.1]{Xu17}.

\section*{Declarations}
\paragraph*{Conflict of Interest} The authors declare no conflict of interest.

\printbibliography[heading=bibintoc]

\vspace{1em}
\begin{flushleft} \small
	F. Chen: Yau Mathematical Sciences Center, Tsinghua University, 100084 Beijing, People's Republic of China. \\
	E-mail address: \href{mailto:fchen@mail.tsinghua.edu.cn}{\texttt{fchen@mail.tsinghua.edu.cn}}
\end{flushleft}

\begin{flushleft} \small
	W.-W. Li: Beijing International Center for Mathematical Research / School of Mathematical Sciences, Peking University. No.\ 5 Yiheyuan Road, 100871 Beijing, People's Republic of China. \\
	E-mail address: \href{mailto:wwli@bicmr.pku.edu.cn}{\texttt{wwli@bicmr.pku.edu.cn}}
\end{flushleft}

\end{document}